\newtheorem{thm}{Theorem}
\newtheorem{lem}[thm]{Lemma}
\newtheorem{prop}[thm]{Proposition}
\newtheorem{defn}[thm]{Definition}
\newtheorem{rem}[thm]{Remark}
\newtheorem{notation}[thm]{Notations}
\newtheorem{ex}[thm]{Example}
\newtheorem{assumption}[thm]{Assumption}
\begin{document}

\title{On adding a variable to a Frobenius manifold and generalizations}

\author{Liana David}

\maketitle

{\bf Abstract:} Let $\pi :V\rightarrow M$ be a (real or
holomorphic) vector bundle whose base has an almost Frobenius
structure $(\circ_{M},e_{M},g_{M})$ and typical fiber has the
structure of a Frobenius algebra $(\circ_{V},e_{V},g_{V})$. Using
a connection $D$ on the bundle $V$ and a morphism $\alpha
:V\rightarrow TM$, we construct an almost Frobenius structure
$(\circ , e_{V},g)$ on the manifold $V$ and we study when it is
Frobenius. In particular, we describe all (real) positive definite
Frobenius structures on $V$ obtained in this way, when $M$ is a
semisimple Frobenius manifold with non-vanishing rotation
coefficients. In the holomorphic setting, we add a real structure
$k_{M}$ on $M$ and a real structure $k_{V}$ on the fibers of $\pi$
and we study when an induced real structure on $V$, together with
the almost Frobenius structure $(\circ , e_{V}, g)$, satisfy the
$tt^{*}$-equations. Along the way, we prove various properties of
adding variables to
a Frobenius manifold, in connection with Legendre transformations and $tt^{*}$-geometry.\\

{\it Key words:} $F$-manifolds, Frobenius manifolds,
$tt^{*}$-equations, Saito bundles, Legendre transformations.\\

{\it MSC Classification:} 53D45, 53B50.

\section{Introduction}

Frobenius manifolds were defined by B. Dubrovin \cite{dubrovin},
as a geometrical interpretation of the so called WDVV
(Witten-Dijkgraaf-Verlinde-Verlinde) equations. They also appear
in many different areas of mathematics - singularity theory
\cite{hertling-book}, quantum cohomology \cite{manin-book} and
integrable systems \cite{hitchin}, providing an unexpected link
between these apparently different fields. Rather than the
original definition of Dubrovin, along the paper we shall use the
following alternative definition of Frobenius manifolds (the
equivalence of the two definitions is a consequence of Theorem
2.15 and Lemma 2.16 of \cite{hertling-book}).

\begin{defn}
1) An almost Frobenius structure on a manifold $M$ in one of the
standard categories (smooth or holomorphic) is given by a
(fiber preserving) commutative, associative multiplication $\circ$ on $TM$, with unit
field $e$, and a metric $g$ on $M$, invariant with respect to
$\circ$, i.e. such that
$$
g(X\circ Y, Z) = g(X,Y\circ Z)\quad\forall X,Y,Z\in {\mathcal X}(M).
$$
2) An almost Frobenius structure $(\circ ,e, g)$ on $M$ is called
Frobenius (and $(M, \circ , e, g)$ is a Frobenius manifold)
if the following conditions hold:\\

i) $(M,\circ , e)$ is an $F$-manifold, that is,
$$
L_{X\circ Y}(\circ ) = X\circ L_{Y}(\circ ) + Y\circ L_{X}(\circ ),
\quad \forall X,Y\in{\mathcal X}(M).
$$
ii) the metric $g$ is admissible on the $F$-manifold $(M,\circ ,e)$ (i.e.
the unit field $e$ is parallel with respect to the
Levi-Civita connection of $g$)
and is flat.
\end{defn}

In flat coordinates $(t^{i})$ for the Frobenius metric $g$ one may
write
$$
g\left( \frac{\partial}{\partial
t^{i}}\circ\frac{\partial}{\partial t^{j}},
\frac{\partial}{\partial t^{k}}\right) =
\frac{\partial^{3}F}{\partial t^{i}\partial t^{j}\partial t^{k}},
$$
for a function $F$, called the potential of the Frobenius manifold
and defined up to adding a quadratic expression in $t^{i}$. The
asssociativity of $\circ$ translates into the WDVV-equations for
$F$: for any fixed $i,j,k,r$,
$$
\sum_{m,s}\frac{\partial^{3}F}{\partial t^{i}\partial
t^{j}\partial t^{s}}g^{sm} \frac{\partial^{3}F}{\partial
t^{m}\partial t^{k}\partial t^{r}} =
\sum_{m,s}\frac{\partial^{3}F}{\partial t^{k}\partial
t^{j}\partial t^{s}} g^{sm}\frac{\partial^{3}F}{\partial
t^{m}\partial t^{i}\partial t^{r}}.
$$

In examples arising from singularity theory, a Frobenius manifold
comes naturally equipped with an Euler field.

\begin{defn} An Euler field on a Frobenius manifold $(M,\circ , e, g)$
is a vector field $E$ such that
$$
L_{E}(\circ )= \circ ,\quad L_{E}(g) = dg
$$
where $d$ is a constant (equal to $2$, when $g(e,e)\neq 0$).
\end{defn}

{\bf Outline of the paper.} It is customary in modern mathematics
to consider various geometrical structures on manifolds and to
search for similar structures on related manifolds (like
submanifolds, quotient manifolds, total spaces of vector bundles,
etc). The starting point of this paper is a result proved in
Chapter VII of \cite{sabbah}, which states that there is a natural
Frobenius structure on the product $M\times \mathbb{K}$ when
$(M,\circ ,e, g)$ is Frobenius $(\mathbb{K} = \mathbb{R}$ when $M$
is a real manifold and $\mathbb{K}=\mathbb{C}$ when $M$ is
complex). It is usually referred as {\it the Frobenius structure
obtained by adding a variable to $M$}, and has unit field
$\frac{\partial}{\partial\tau}$ (where $\tau$ is the coordinate on
$\mathbb{K}$). It is important to note that the Frobenius metric
on $M\times\mathbb{K}^{r}$  is not a product metric, and neither
the multiplication is a product multiplication. If $E$ is an Euler
field on $M$ with $L_{E}(g) =2g$, then $E +R$ (where $R_{(p,\tau
)}= \tau\frac{\partial}{\partial \tau}$, $(p,\tau )\in
M\times\mathbb{K}$ is the radial field) is Euler on
$M\times\mathbb{K}.$ The overall aim of this paper is to develop
generalizations of this construction.

Section \ref{preliminary}, intended mostly for completeness (only
Proposition \ref{iterations} is original) contains a brief account
of the Frobenius structure (with or without an Euler field)
obtained from a vector bundle with additional data (e.g. a Saito
structure) and a primitive section \cite{saito}. Such a data
appears naturally in the theory of isomonodromic deformations.
Then we recall, following \cite{sabbah}, the construction of
adding a variable to a Frobenius manifold. With this preliminary
background, we determine the general form of the Frobenius
structure on $M\times\mathbb{K}^{r}$ ($r\geq 1$), obtained by
Legendre transformations and successively adding variables to a
Frobenius manifold $M$ (see Proposition \ref{iterations}), this
being a good motivation for our treatment from the following
sections.

The general set-up of the paper is a $\mathbb{K}$-vector bundle
$\pi: V\rightarrow M$, with a connection $D$, an associative,
commutative multiplication $\circ_{M}$ on $TM$, with unit field
$e_{M}$, a similar multiplication $\circ_{V}$ on the fibers of
$V$, with unit field $e_{V}\in\Gamma (V)$, and a morphism $\alpha
: V\rightarrow TM.$ From this data we construct a multiplication
$\circ$ on $TV$, with unit field $e_{V}$ (viewed as a
tangent vertical vector field). Later on, we  add to this picture
an invariant metric $g_{M}$ on $(M,\circ_{M},e_{M})$ and an
invariant metric $g_{V}$ on the fibers of $V$, and we construct a
metric $g$ on the manifold $V$. (The case when $V$ is  the trivial
bundle of rank one, $D$ is the trivial connection, $\alpha
(e_{V})=e_{M}$ and $(M,\circ_{M},e_{M},g_{M})$ is Frobenius
corresponds to adding a variable to $M$). We determine various
properties of the structures on $V$ so defined, as follows.

In Section \ref{four1} we
prove that $(V,\circ ,e_{V})$ is an $F$-manifold
if and only if
$(M,\circ_{M},e_{M})$ is an $F$-manifold, the connection $D$ is flat,
the multiplication $\circ_{V}$ is $D$-parallel and two other
natural conditions are satisfied, namely  conditions (\ref{alpha})
and (\ref{croset}) (see Proposition \ref{multiplication}).
We remark that the commutativity condition (\ref{croset})
is implied by (\ref{alpha}), when the $F$-manifold $(M,\circ_{M} , e_{M})$ is
semisimple (see Remark \ref{rem-ss}).

In Section \ref{admissible} we assume that $(V, \circ , e_{V})$ is
an $F$-manifold and we prove that $g$ is an admissible metric on
$(V,\circ ,e_{V})$ if and only if $D(g_{V})=0$ and the coidentity
$\epsilon_{M}:= g_{M}(e_{M}, \cdot )$ is closed (see Proposition
\ref{admissible-cond}).

In Section \ref{frobenius}  we assume that $(V,\circ ,e_{V},g)$ is
an $F$-manifold with admissible metric and that the base $(M,
\circ_{M},e_{M},g_{M})$ is a Frobenius manifold and we show that
the flatness of $g$ (i.e. the only remaining condition for $(V,
\circ , e_{V}, g)$ to be Frobenius) can be expressed in terms of a
system of conditions for the morphism $\alpha$, see Proposition
\ref{cond-flatness}. Our main result in this section is Theorem
\ref{corectat}, which provides a complete description, in the real case, of all Frobenius structures
$(\circ , e_{V}, g)$ on $V$, with positive definite metric $g$,
when $(M,\circ_{M},e_{M}, g_{M})$ is a semisimple Frobenius manifold with non-vanishing rotation coefficients.
It turns out that
\begin{equation}\label{forma-alpha}
\alpha = \lambda\otimes e_{M}
\end{equation}
with $\lambda\in \Gamma (V^{*})$ a $D$-parallel section
satisfying
$$
\lambda (e_{V})=1,\quad \lambda (v_{1}\circ_{V}v_{2}) = \lambda (v_{1})\lambda (v_{2}),\quad\forall v_{1},v_{2}\in V.
$$
It would be interesting to understand if the morphism $\alpha$
must be of this form also in other signatures, or in the complex
case. At the end of this section we return to the general picture
(no semisimplicity assumptions) and we consider a class of
Frobenius structures $(\circ ,e_{V}, g)$ on the total space of the
trivial bundle $\pi :V= M\times\mathbb{K}^{r}\rightarrow M$
obtained from the data $(D,\circ_{M}, e_{M},\circ_{V}, e_{V},
\alpha , g_{M}, g_{V})$ as before, where the base $(M,
\circ_{M},e_{M},g_{M})$ is a Frobenius manifold, $D$ is the
trivial standard connection and $\alpha$ is of the form
(\ref{forma-alpha}) (see Theorem \ref{trivial}). We study in
detail this class of Frobenius manifolds, in connection with Euler
fields and Saito bundles (see Proposition \ref{trivial-prop}) and
we compute their potential (see Remark \ref{potential}).

In Section \ref{sect-tt} we return to the general setting of an
holomorphic vector bundle $\pi : V\rightarrow M$ with data
$(D,\circ_{M},e_{M}, \circ_{V},e_{V}, \alpha , g_{M}, g_{V})$. Let
$\circ$ and $g$ be the associated multiplication and metric on the
manifold $V$, defined by this data. (We do not assume that $(\circ
,e_{V}, g)$ is a Frobenius structure). Instead, we assume that $M$
has a real structure $k_{M}$ and the fibers of $\pi$ have a real
structure $k_{V}$. Using $k_{M}$ and $k_{V}$ we define a real
structure $k$ on $V$ (which coincides with $k_{M}$ on the
$D$-horizontal subbundle and with $k_{V}$ on the vertical
subbundle). Our main result is Theorem \ref{main-thm-tt}, which
states the necessary and sufficient conditions for the
$tt^{*}$-equations to hold on $(V, \phi , h)$ where $\phi_{X}Y= -
X\circ Y$ and $h(X,Y) = g(X,kY).$ Then we discuss an example where
all these conditions hold (see Example \ref{detailed-ex}).
Finally, we return to the Frobenius structure from Theorem
\ref{trivial} (with $\mathbb{K}=\mathbb{C})$ and we show that
imposing the $tt^{*}$-equations in the framework of this section
gives high restrictions on the geometry of the base $(M,
\circ_{M},e_{M},g_{M})$ (see Proposition \ref{second} and the
comments before this proposition).\\

{\bf Acknowledgements:} This work was supported by a grant of the Romanian National Authority
for Scientific Research, CNCS-UEFISCDI, project no.
PN-II-ID-PCE-2011-3-0362.

\section{Legendre transformations and adding variables to
Frobenius manifolds}\label{preliminary}

Along the paper we  use the following conventions.  A
$\mathbb{K}$-manifold is a real manifold when $\mathbb{K}
=\mathbb{R}$ or a complex manifold when $\mathbb{K}= \mathbb{C}.$
We  denote by ${\mathcal X}(M)$ the sheaf of $\mathbb{K}$-vector
fields on a $\mathbb{K}$-manifold $M$ (i.e. smooth vector fields,
if $M$ is a real manifold, or holomorphic vector fields, when $M$
is a complex manifold). By a $\mathbb{K}$-vector bundle we mean a
real vector bundle ($\mathbb{K}=\mathbb{R}$) or an holomorphic
vector bundle ($\mathbb{K} =\mathbb{C}$). Unless otherwise stated,
all objects we consider on $\mathbb{C}$-vector bundles (e.g.
sections, metrics, connections, Higgs fields, endomorphisms, etc)
are holomorphic and  for a complex manifold $M$, $TM$ will denote
the holomorphic tangent bundle and $\Omega^{1}(M, V)$ the bundle
of holomorphic $1$-forms with values in a $\mathbb{C}$-vector
bundle $V\rightarrow M.$ In our conventions, the curvature $R^{D}$
of a connection $D$ is given by $R^{D}(X,Y) =D_{X}D_{Y} -
D_{Y}D_{X} - D_{[X,Y]}$.

\subsection{Frobenius structures from infinitesimal period
mappings}

Let $\pi :V\rightarrow M$ be a $\mathbb{K}$-vector bundle
with $\mathrm{rank}(V) = \mathrm{dim}(M)$, endowed
with a connection
$\nabla$, a metric $g$ and a vector valued $1$-form $\phi \in \Omega^{1}(M,
\mathrm{End}V)$, satisfying the conditions:
\begin{equation}\label{saito0}
R^{\nabla}=0,\quad d^{\nabla}\phi =0,\quad \nabla g=0,\quad
\phi\wedge \phi =0,\quad \phi^{*}=\phi ,
\end{equation}
where
$$
(\phi\wedge \phi )_{X,Y} := \phi_{X}\phi_{Y}-\phi_{Y}\phi_{X},\quad X,Y\in TM
$$
and, for any $X\in TM$, $\phi^{*}_{X}\in\mathrm{End}(V)$  is the
adjoint of $\phi_{X}\in\mathrm{End}(V)$ with respect to $g$.
Assume, moreover, that there is a vector field $e$ on $M$ such
that $\phi_{e} = - \mathrm{Id}_{V}$ (where $\mathrm{Id}_{V}$ is
the identity endomorphism of $V$). Let  $\omega\in\Gamma (V)$
(usually called a primitive section) be $\nabla$-parallel such
that the map
\begin{equation}\label{psi-omega}
\psi^{\omega} : TM\rightarrow V,\quad \psi^{\omega} (X) = -
\phi_{X}(\omega )
\end{equation}
is an isomorphism.  Define  a multiplication $\circ$ on $TM$,
with unit field $e$, by
$$
X\circ Y = (\psi^{\omega})^{-1}\left( \phi_{X}\phi_{Y}\omega
\right) .
$$
Using $\psi^{\omega}$, we may transport the metric $g$ and the
connection $\nabla$ to  a metric $g^{\omega}$ and a connection
$\nabla^{\omega}$ on $TM.$  It is easy to see that $\circ$ is
independent of  the choice of primitive section,  while for
$g^{\omega}$ and $\nabla^{\omega}$ the choice of $\omega$ is
essential. As proved by K. Saito \cite{saito}, $(M,\circ ,
e,g^{\omega})$ is a Frobenius $\mathbb{K}$-manifold, with
Levi-Civita connection $\nabla^{\omega}.$

\begin{rem}\label{saito-structure}{\rm In examples coming from singularity theory,
the bundle $V$ comes equipped with two additional
endomorphisms, $R_{0}$ and $R_{\infty}$, satisfying the
conditions:
\begin{align*}
&\nabla R_{0}+ \phi = [\phi , R_{\infty}], \quad [R_{0},
\phi ]=0,\quad R_{0}^{*}= R_{0};\\
&\nabla R_{\infty}=0,\quad R_{\infty}^{*} + R_{\infty} = -w\mathrm{Id}_{V},
\end{align*}
where $w\in\mathbb{K}.$
If $\omega$ is homogeneous, i.e.
$R_{\infty}(\omega ) = - q\omega$ for $q\in\mathbb{K}$, then
$$
E^{\omega}:= (\psi^{\omega})^{-1}(R_{0}(\omega ))
$$
is Euler for $(M,\circ , e, g^{\omega})$, with
$L_{E^{\omega}}(g^{\omega}) = \left( 2(1+q) -w\right) g^{\omega}$,
and
$$
R_{\infty}^{\omega}
:= (\psi^{\omega})^{-1}\circ R_{\infty}\circ \psi^{\omega}=
\nabla^{\omega} (E^{\omega}) - (1+q) \mathrm{Id}.
$$
The data $(\nabla ,
\phi , g, R_{0}, R_{\infty})$  is usually
called a Saito structure (of weight $w$) on $V$. }

\end{rem}

\subsection{Adjunction of a variable to a Frobenius manifold}\label{adjunction}

Following \cite{sabbah}, we now recall that if $(M, \circ , e, g)$
is a Frobenius $\mathbb{K}$-manifold, then $M\times \mathbb{K}$ is
also Frobenius, with unit field $\frac{\partial}{\partial \tau}$
(where $\tau$ is the coordinate on $\mathbb{K}$). In order to
explain this statement, consider the direct sum bundle $TM\oplus
L$, where
\begin{equation}\label{tm-l}
\pi :L=M\times\mathbb{K}\rightarrow M
\end{equation}
is the trivial rank one bundle. Fix $v\in \mathbb{K}$ non-zero
(seen as a constant section of $L$) and define a metric
$\tilde{g}$ on the bundle $L$ by $\tilde{g}(v,v)=1.$ Let $D$ be
the standard trivial connection on $L$ and $\nabla$ the Levi-Civita
connection of $g$. Consider the Higgs field
$$
\phi_{X}(Y):= - X\circ Y,\quad X,Y\in TM,
$$
trivially extended, as a $1$-form on $M$ with values in
$\mathrm{End}(TM\oplus L).$ It is easy to check that the data
 \begin{equation}\label{tm-L}
(\nabla^{L} :=  \nabla\oplus D,\quad g^{L} := g\oplus
\tilde{g},\quad \phi^{L} :=\phi  )
\end{equation}
on  $TM\oplus L$ satisfies relations (\ref{saito0}), and so does the data
\begin{equation}\label{data-first}
\left( \pi^{*}(\nabla^{L}),\quad \pi^{*}(g^{L}),\quad \phi^{\prime}:=
\pi^{*}(\phi^{L}) - d\tau \otimes \mathrm{Id}_{\pi^{*}(TM\oplus L)}\right)
\end{equation}
on $\pi^{*}(TM\oplus L)$.
Any Legendre field $X_{0}$ on $(M, \circ , e, g)$ (i.e. a parallel invertible
vector field) determines a primitive section
$\omega :=\pi^{*}(X_{0}+ v)$ for
$(\pi^{*}(TM\oplus L), \pi^{*}(\nabla^{L}), \pi^{*}(g^{L}),\phi^{\prime})$.
The isomorphism
$$
\psi^{\omega }: T(M\times \mathbb{K})\rightarrow \pi^{*}(TM\oplus
L) ,\quad \psi^{\omega}(Z) = -\phi^{\prime}_{Z}(\omega )
$$
is given by
\begin{equation}\label{psi-omega}
\psi^{\omega}(X) = \pi^{*}(X\circ X_{0}),
\quad \psi^{\omega}\left(\frac{\partial}{\partial \tau}\right) =\pi^{*}( X_{0}+v) ,\quad X\in TM
\end{equation}
and the induced Frobenius structure
$(\circ^{(1)}, \frac{\partial}{\partial \tau}, g^{(1)})$
on $M\times \mathbb{K}$ can be described  as follows:
\begin{equation}\label{r-1}
 X \circ^{(1)}Y = X\circ Y, \quad X\circ^{(1)}
\frac{\partial}{\partial \tau} =
\frac{\partial}{\partial \tau}\circ^{(1)} X=
X, \quad
\frac{\partial}{\partial\tau} \circ^{(1)}\frac{\partial}{\partial\tau}=
\frac{\partial}{\partial\tau}
\end{equation}
for any $X,Y\in TM$ and
\begin{equation}\label{r-2}
g ^{(1)}= g^{X_{0}} + d\tau \otimes i_{e}(g^{X_{0}}) +
i_{e}(g^{X_{0}}) \otimes d\tau +
(g^{X_{0}}(e,e) +1) d\tau \otimes d\tau ,
\end{equation}
where
$$
g^{X_{0}}(X,Y) := g(X_{0}\circ X, X_{0}\circ Y)
$$
is the Legendre transformation of $g$ by the Legendre field
$X_{0}$. The Frobenius structure on $M\times\mathbb{K}$
constructed in this way, with $X_{0}=e$, is usually called the
Frobenius structure obtained from $(M,\circ ,e,g)$ by adding a
variable (\cite{sabbah}, Chapter VII).

\begin{rem}\label{comutare} {\rm
If $X_{0}$ is a Legendre field on a Frobenius
manifold $(M,\circ , e, g)$, then
$(M, \circ , e, g^{X_{0}})$ is also Frobenius (see e.g. \cite{manin}).
From (\ref{r-1}) and (\ref{r-2}) we deduce that
the Frobenius structure on
$M\times\mathbb{K}$, obtained as above by using $\pi^{*}(X_{0}+v)$ as primitive
section, coincides with the Frobenius structure obtained
by adding a variable to the Legendre transformation
$(M,\circ ,e, g^{X_{0}})$ of $(M,\circ ,e,g).$}
\end{rem}

\begin{rem}\label{add-var-saito}{\rm Assume now that the Frobenius manifold
$(M, \circ , e, g)$ has an Euler field $E$ such that
$L_{E}(g) = dg$, for $d\in \mathbb{K}$, and let
$$
\left(\nabla ,\quad g,\quad \phi ,\quad R_{0}:= -\phi_{E},\quad R_{\infty}:=
\nabla E -\frac{1}{2}\left( w+d\right)\mathrm{Id}_{TM}\right)
$$
be the associated Saito structure on $TM$ of weight $w$, where
$\nabla$ is the Levi-Civita connection of $g$ and
$\phi_{X}(Y):= - X\circ Y$ is the
Higgs field of the Frobenius manifold.
Consider $R_{0}$ and $R_{\infty}$ as endomorphisms
of $TM\oplus L$, acting trivially on $L$.
Then
\begin{equation}\label{data}
\left( \pi^{*}(\nabla^{L}),\quad \pi^{*}(g^{L}),\quad \phi^{\prime},\quad
R^{\prime}_{\infty}, \quad R^{\prime}_{0}\right) ,
\end{equation}
where $\pi$ is the projection (\ref{tm-l}),
$\nabla^{L}$, $g^{L}$ and $\phi^{\prime}$ are defined as before by (\ref{tm-L}), (\ref{data-first}), and
$$
R^{\prime}_{\infty}:= \pi^{*}\left( R_{\infty} -
\frac{w}{2}\mathrm{Id}_{V}\right) ,\quad
R^{\prime}_{0}:= \pi^{*}(R_{0})
+\tau \mathrm{Id}_{\pi^{*}(TM\oplus L)},
$$
is a Saito structure of weight $w$ on $\pi^{*}( TM\oplus L)$.
Remark that $\pi^{*}(e + v)$ is $\pi^{*}(\nabla^{L})$-flat, and,
moreover, it is an eigenvector of $R^{\prime}_{\infty}$ if and
only if $d=2.$ Therefore, if $d=2$, $\omega := \pi^{*}(e + v)$ is
a primitive homogeneous section for the Saito bundle $\pi^{*}(
TM\oplus L)$ with data (\ref{data}). The induced Frobenius
structure on $M\times\mathbb{K}$ has an Euler field, namely $E+
R$, where $R_{(p,\tau )}:= \tau\frac{\partial}{\partial \tau}$ is
the radial field. This is because
$$
E+ R = (\psi^{\omega})^{-1} R^{\prime}_{0} \left( \pi^{*}(e+v)\right) ,
$$
see  Remark \ref{saito-structure} and
(\ref{psi-omega}) (with $X_{0}=e$).}
\end{rem}

\subsection{Iterations}
\label{iterations-section}

In this section we determine the general form of the Frobenius structure
obtained by Legendre transformations and successively adding variables to a Frobenius manifold, as follows.

\begin{prop}\label{iterations}
The Frobenius structure on $M\times \mathbb{K}^{r}$  ($r\geq 1$)
obtained from a Frobenius $\mathbb{K}$-manifold $(M,\circ ,e,g)$
by Legendre transformations and adding variables has
multiplication $\circ^{(r)}$ given by
\begin{equation}\label{multi-m}
X \circ^{(r)} Y = X\circ Y, \quad X
\circ^{(r)}\frac{\partial}{\partial \tau_{i}}= X, \quad
\frac{\partial}{\partial \tau_{i}} \circ^{(r)}
\frac{\partial}{\partial \tau_{j}} = \frac{\partial}{\partial
\tau_{\mathrm{min}\{ i,j\}}}
\end{equation}
for any $X, Y\in TM$, $1\leq i,j\leq r$, unit field
$\frac{\partial}{\partial \tau^{r}}$ and metric
\begin{equation}\label{g-k}
g^{(r)} = g^{Z_{0}} +\sum_{k=1}^{r}\left( d\tau_{k}\otimes
i_{e}(g^{Z_{0}} )+ i_{e}(g^{Z_{0}}) \otimes d\tau_{k}
\right)+\sum_{i,j=1}^{r} g^{(r)}_{ij} d\tau_{i} \otimes d\tau_{j},
\end{equation}
where $Z_{0}$ is a Legendre field on $(M, \circ, e, g)$ and
$g_{ij}^{(r)}$ are constants, satisfying
\begin{equation}\label{relation-coef}
g^{(r)}_{ij} = g^{(r)}_{ji} = g^{(r)}_{\mathrm{min}\{ i,j\} ,
r},\quad \forall 1\leq i,j\leq r.
\end{equation}
Above  $(\tau_{1},\cdots , \tau_{r})$ is the standard coordinate system of $\mathbb{K}^{r}.$
\end{prop}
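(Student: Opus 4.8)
The plan is to prove this by induction on $r$, using the single-variable construction (equations (\ref{r-1}) and (\ref{r-2})) together with Remark \ref{comutare} as the inductive step. The base case $r=1$ is precisely the content of formulas (\ref{r-1}) and (\ref{r-2}): adding one variable to the Legendre transformation $(M,\circ,e,g^{Z_0})$ gives exactly the stated multiplication and metric, and the coefficient condition (\ref{relation-coef}) reduces for $r=1$ to $g^{(1)}_{11}=g^{X_0}(e,e)+1$, consistent with (\ref{r-2}). I would set up the notation so that ``adding variables'' means iterating the single-variable construction, at each stage possibly precomposing with a Legendre transformation, as licensed by Remark \ref{comutare}.

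For the inductive step, I would assume the formula holds for $M\times\mathbb{K}^{r-1}$ with coordinates $(\tau_1,\dots,\tau_{r-1})$ and then add one more variable $\tau_r$. The key observation is that the new base is itself a Frobenius manifold $M\times\mathbb{K}^{r-1}$ with unit field $\partial/\partial\tau_{r-1}$, so I apply (\ref{r-1})--(\ref{r-2}) with $M$ replaced by $M\times\mathbb{K}^{r-1}$ and $e$ replaced by $\partial/\partial\tau_{r-1}$. For the multiplication, this directly yields $\frac{\partial}{\partial\tau_r}\circ^{(r)}\frac{\partial}{\partial\tau_r}=\frac{\partial}{\partial\tau_r}$ and $\frac{\partial}{\partial\tau_i}\circ^{(r)}\frac{\partial}{\partial\tau_r}=\frac{\partial}{\partial\tau_i}$, while the products among the old variables are unchanged. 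To match (\ref{multi-m}) I must verify that the formula $\frac{\partial}{\partial\tau_i}\circ^{(r)}\frac{\partial}{\partial\tau_j}=\frac{\partial}{\partial\tau_{\min\{i,j\}}}$ survives: since the unit field at stage $r$ is $\partial/\partial\tau_r$, the element $\partial/\partial\tau_r$ acts as identity, and for $i,j<r$ the product is inherited from the previous stage by the first relation in (\ref{r-1}), so the $\min$ structure propagates correctly.

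The metric is where the real work lies, and I expect it to be the main obstacle. Applying (\ref{r-2}) at stage $r$ requires computing the Legendre transformation of the stage-$(r-1)$ metric by the Legendre field $\partial/\partial\tau_{r-1}$, together with the terms $i_{e}(g^{Z_0})\otimes d\tau_r$ and the diagonal coefficient. The crucial point is to track how the off-diagonal constants $g^{(r)}_{ij}$ arise and to verify the constraint (\ref{relation-coef}), namely $g^{(r)}_{ij}=g^{(r)}_{\min\{i,j\},r}$. I would argue that the Legendre transformation by $\partial/\partial\tau_{r-1}$ shifts the role of the ``last'' coordinate, so that each block $d\tau_i\otimes d\tau_j$ inherits its coefficient from the $(\min\{i,j\},r)$-entry, and that the freedom of choosing the Legendre field $Z_0$ on $M$ and the quadratic ambiguity in the potential account for all admissible constants. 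Here I would need to check carefully that the coefficient $g^{X_0}(e,e)+1$ appearing in each single-variable step, when combined across the iteration, produces exactly the symmetric pattern (\ref{relation-coef}); verifying the symmetry $g^{(r)}_{ij}=g^{(r)}_{ji}$ and the reduction to the single index $\min\{i,j\}$ against $r$ is the computational heart of the argument. Finally, I would confirm that only one Legendre field $Z_0$ on $M$ needs to be recorded: by Remark \ref{comutare}, the accumulated Legendre transformations on the base collapse into a single transformation $g^{Z_0}$, which is why the formula (\ref{g-k}) involves only one Legendre field rather than one per stage.
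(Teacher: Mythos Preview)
Your inductive framework is the paper's approach (it treats $r=2$ in detail and declares the general case similar), and your handling of the multiplication is fine. The gap is in the metric step, specifically in how you treat the Legendre transformations.

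You write that one must compute ``the Legendre transformation of the stage-$(r-1)$ metric by the Legendre field $\partial/\partial\tau_{r-1}$'', but $\partial/\partial\tau_{r-1}$ is the \emph{unit} of the stage-$(r-1)$ Frobenius manifold, and the Legendre transformation by the unit is the identity. What the construction actually allows at each stage is a Legendre transformation by an \emph{arbitrary} Legendre field $Z$ on $M\times\mathbb{K}^{r-1}$ before adjoining $\tau_r$. The substantive work---which you defer to Remark~\ref{comutare}---is to show that any such $Z$ decomposes as a sum of a field parallel on $(M,g^{X_0})$ and a constant combination of the $\partial/\partial\tau_i$, and then to compute that its effect on the $TM$-block is the Legendre transformation by a single field on $(M,\circ,e,g)$. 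The paper does exactly this for $r=2$: it shows $Z=Z^{TM}+c\,\partial/\partial\tau_1$ with $Z^{TM}$ parallel on $(M,g^{X_0})$ and $c$ constant, proves that $Z_0:=X_0\circ(Z^{TM}+ce)$ is a Legendre field on $(M,\circ,e,g)$ (checking both parallelism and invertibility), and then verifies $g^{(2)}(X,Y)=g^{Z_0}(X,Y)$ and $g^{(2)}(\partial/\partial\tau_i,Y)=g^{Z_0}(e,Y)$ by direct computation using $Z\circ^{(1)}X=(Z^{TM}+ce)\circ X$. Remark~\ref{comutare} alone does not provide this collapse; it only tells you that a single adjunction with Legendre field $X_0$ equals adjunction after transforming the base by $X_0$, not how a Legendre field on a product descends to one on $M$. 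Without this decomposition and the explicit formula for the accumulated $Z_0$, neither the form (\ref{g-k}) nor the constancy and relation (\ref{relation-coef}) for $g^{(r)}_{ij}$ follow.
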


\begin{proof}
We only prove the statement for $r=2$, the general case being
similar. Let $X_{0}$ be a Legendre field on $(M, \circ  ,
e, g)$ and $(M\times\mathbb{K},\circ^{(1)},
\frac{\partial}{\partial \tau_{1}},g^{(1)})$ the Frobenius
manifold obtained
by adding a variable to the Legendre transformation $(M,\circ , e, g^{X_{0}})$
of $(M, \circ ,e,g).$  Thus,
from relations (\ref{r-1}) and (\ref{r-2}) of Section
\ref{adjunction},
$\circ^{(1)}$ and
$g^{(1)}$ are given by
\begin{equation}\label{rr-1}
 X \circ^{(1)}Y = X\circ Y, \quad X\circ^{(1)}
\frac{\partial}{\partial \tau_{1}} =
\frac{\partial}{\partial \tau_{1}}\circ^{(1)} X=
X, \quad
\frac{\partial}{\partial\tau_{1}} \circ^{(1)}\frac{\partial}{\partial\tau_{1}}=
\frac{\partial}{\partial\tau_{1}}
\end{equation}
for any $X,Y\in TM$ and
\begin{equation}\label{rr-2}
g ^{(1)}= g^{X_{0}} + d\tau_{1} \otimes i_{e}(g^{X_{0}}) +
i_{e}(g^{X_{0}}) \otimes d\tau_{1} +
(g^{X_{0}}(e,e) +1) d\tau_{1} \otimes d\tau_{1}.
\end{equation}
Let $(M\times\mathbb{K}^{2},\circ^{(2)}, \frac{\partial}{\partial
\tau_{2}}, g^{(2)})$ be the Frobenius manifold obtained by adding
a variable to $(M\times \mathbb{K}, \circ^{(1)},
\frac{\partial}{\partial \tau_{1}}, (g^{(1)})^{Z})$, where $Z$ is
a Legendre field on $(M\times \mathbb{K}, \circ^{(1)},
\frac{\partial}{\partial \tau_{1}}, g^{(1)})$. We need to check
that $\circ^{(2)}$ and $g^{(2)}$ are given by (\ref{multi-m}) and
(\ref{g-k}) (with $r=2$), for a Legendre field $Z_{0}$ on $(M,
\circ , e, g)$ which needs to be determined. It is easy to check
the statement for $\circ^{(2)}$. We only prove the statement for
$g^{(2)}.$ In order to prove that $g^{(2)}$ is of the required
form, we first notice that $Z$, being a Legendre field on
$(M\times\mathbb{K}, \circ^{(1)}, \frac{\partial}{\partial
\tau^{1}},g^{(1)})$, it is parallel with respect to the
Levi-Civita connection of $g^{(1)}$ and therefore it decomposes
into a sum
$$
Z = Z^{TM} + c \frac{\partial}{\partial \tau^{1}}
$$
where $Z^{TM}$ is a vector field on $M$ which is parallel with
respect to the Levi-Civita connection of $g^{X_{0}}$ and $c\in
\mathbb{K}$  (since $g^{(1)}$ is given by (\ref{rr-2}), any
parallel vector field on $(M\times \mathbb{K}, g^{(1)})$ is a sum
of a parallel vector field on $(M, g^{X_{0}})$ and  a constant
multiple of $\frac{\partial}{\partial\tau^{1}}$ -  this can be
checked directly; see also Remark \ref{variable-ajut}).

With this preliminary remark, we now
claim that $g^{(2)}$ is given by
(\ref{g-k}), with
\begin{equation}\label{zo}
Z_{0}:= X_{0}\circ (Z^{TM} + ce),
\end{equation}
and that $Z_{0}$ is a Legendre field on $(M, \circ , e, g).$ We
divide the proof of this claim in two steps: first, we prove that
$Z_{0}$ is Legendre on $(M, \circ , e, g)$; next, we prove that
$g^{(2)}$ is of the  form (\ref{g-k}), with $Z_{0}$ given by
(\ref{zo}).

To prove that $Z_{0}$ given by (\ref{zo}) is a Legendre field on $(M, \circ ,e,g)$,
we  recall that the Levi-Civita connections of $g$ and $g^{X_{0}}$
are related by a Legendre transformation
$$
\nabla^{g^{X_{0}}} = X_{0}^{-1}\circ \nabla^{g}\circ X_{0}.
$$
Therefore, since $Z^{TM}$ is $\nabla^{g^{X_{0}}}$-parallel, so is
$Z^{TM} + ce$ (because $\nabla^{g} X_{0}=0$) and $Z_{0}$, defined
by (\ref{zo}), is parallel with respect to $\nabla^{g}$. Moreover,
it may be checked that if $W\in {\mathcal X}(M\times\mathbb{K})$
is the inverse of $Z$ with respect to $\circ^{(1)}$ (which exists,
because $Z$ is Legendre on $(M\times\mathbb{K}, \circ^{(1)},
\frac{\partial}{\partial\tau_{1}}, g^{(1)})$), then the projection
$W^{TM}$ of $W$ to $TM$ is a vector field on $M$ and
$$
\left( W^{TM}+ \frac{1}{c}e\right)\circ \left( Z^{TM}+ c e\right) =e,
$$
(note also that $c\neq 0$),  i.e. $Z^{TM}+ce$, hence also $Z_{0}$,
are invertible with respect to $\circ .$ We conclude that $Z_{0}$
is a Legendre field on $(M,\circ ,e,g)$, as required.

It remains to prove that $g^{(2)}$ is given by (\ref{zo}).
For this, we use  relations (\ref{r-1}), (\ref{r-2}) and that
$\frac{\partial}{\partial \tau^{1}}$ is the unit field for
$\circ^{(1)}$ and we get:
\begin{equation}\label{ecu}
g^{(2)} = (g^{(1)})^{Z} + d\tau_{2}\otimes i_{\frac{\partial}{\partial
\tau^{1}}} (g^{(1)})^{Z} +
i_{\frac{\partial}{\partial
\tau^{1}}}(g^{(1)})^{Z}\otimes d\tau_{2} +
\left( g^{(1)}(Z,Z) + 1\right) d\tau_{2}\otimes d\tau_{2}.
\end{equation}
From (\ref{rr-1}), (\ref{rr-2}) and (\ref{ecu}),  for any $X, Y\in TM$,
\begin{equation}\label{g-2-circ}
g^{(2)}(X,Y) = g^{(1)} (Z\circ^{(1)}X, Z\circ^{(1)}Y)
= g^{X_{0}}\left( (Z^{TM} + ce)^{2}\circ X, Y\right)  ,
\end{equation}
because
\begin{equation}\label{z-c}
Z\circ^{(1)} X = \left( Z^{TM} + c\frac{\partial}{\partial \tau^{1}}\right)\circ^{(1)} X= Z^{TM}\circ X + cX
= (Z^{TM} + ce)\circ  X.
\end{equation}
Thus,
$$
g^{(2)}(X,Y) = g(Z_{0}\circ X, Z_{0}\circ Y) = g^{Z_{0}}(X,Y),\quad
\forall X,Y\in TM.
$$
From (\ref{rr-2}), (\ref{ecu}) and (\ref{z-c}),
\begin{align*}
g^{(2)} (\frac{\partial}{\partial \tau^{1}}, Y) & =
(g^{(1)})^{Z}\left( \frac{\partial}{\partial \tau_{1}},Y\right)=
g^{(1)}(Z\circ^{(1)}\frac{\partial}{\partial \tau^{1}},
Z\circ^{(1)} Y)\\
& = g^{(1)} ( Z, (Z^{TM}+ ce)\circ Y ) = g^{(1)} ( Z^{TM} +
c\frac{\partial}{\partial \tau_{1}}, (Z^{TM}+ ce)\circ Y)\\
&= g^{X_{0}} \left( (Z^{TM} + ce)^{2}, Y\right) = g^{Z_{0}}(e,Y)
\end{align*}
where in the second line we used (\ref{z-c}) and that $\frac{\partial}{\partial
\tau^{1}}$ is the unit field for $\circ^{(1)}$, and in the last line we used the definition (\ref{rr-2}) of
$g^{(1)}.$ The above computation also implies
$$
g^{(2)} (\frac{\partial}{\partial \tau^{2}}, Y)=
(g^{(1)})^{Z}
\left(\frac{\partial}{\partial \tau_{1}}, Y\right)=
g^{Z_{0}}(e,Y), \quad\forall Y\in TM.
$$
We proved that $g^{(2)}$ is of the form (\ref{g-k}), with $r=2$,
$Z_{0}$ given by (\ref{zo}) and coefficients
$$
g^{(2)}_{ij}:= g^{(2)} \left(\frac{\partial}{\partial \tau^{i}},
\frac{\partial}{\partial \tau^{j}}\right) = g^{(2)} \left(
\frac{\partial}{\partial \tau^{i}}\circ^{(2)}\frac{\partial}{\partial
\tau^{j}},\frac{\partial}{\partial\tau^{r}}\right) =
g^{(2)}_{\mathrm{min}\{i,j\} r} ,\quad\forall 1\leq i, j\leq 2,
$$
because $\frac{\partial}{\partial \tau_{i}}\circ^{(2)}
\frac{\partial}{\partial \tau_{j}} = \frac{\partial}{\partial
\tau_{\mathrm{min} \{i,j\}}}.$ Remark that $g^{(2)}_{ij}$ are
constant, because
\begin{align*}
g^{(2)}\left(\frac{\partial}{\partial\tau^{1}},
\frac{\partial}{\partial\tau^{1}}\right) &=
g^{(2)}\left( \frac{\partial}{\partial\tau^{1}},
\frac{\partial}{\partial\tau^{2}}\right)
= g^{(1)}( Z,Z)\\
g^{(2)}\left(\frac{\partial}{\partial\tau^{2}},
\frac{\partial}{\partial\tau^{2}}\right)
&= g^{(1)}(Z,Z)+1
\end{align*}
and $Z$ is Legendre (hence, parallel) on $(M\times\mathbb{K},
\circ^{(1)}, \frac{\partial}{\partial\tau^{1}}, g^{(1)})$.

\end{proof}

In the following sections we will search for more general
Frobenius structures on total spaces of vector bundles.

\section{Total spaces of vector
bundles as $F$-manifolds}\label{four1}

Here and in the following sections we fix a $\mathbb{K}$-vector bundle
$\pi : V\rightarrow M$ with the following additional data:\\

$\bullet$ a {\bf connection $D$ on $V$}, which induces a
decomposition
\begin{equation}\label{m-c}
T_{v}V = T_{p}M\oplus V_{p}, \quad\forall v\in V
\end{equation}
into horizontal and vertical subspaces. The horizontal lift of a
vector field $X\in {\mathcal X}(M)$ will be denoted $\tilde{X}$.
When a longer term $Expression$ is lifted to $V$, the lift will be
denoted by $\left[ Expression \right]^{\widetilde{}}.$ Often
sections of $V$ will be considered (without mentioning explicitly)
as vertical vector fields on the manifold $V$. Recall that
\begin{equation}\label{curv}
[\tilde{X}, \tilde{Y}]_{v} = {[X,Y]}^{\widetilde{}}_{v}- R^{D}_{X,
Y}v, \quad \forall X, Y\in {\mathcal X}(M), \quad\forall v\in V
\end{equation}
and
\begin{equation}\label{vect-curb}
[\tilde{X}, s] = D_{X}s, \quad\forall X\in {\mathcal X}(M), \quad
\forall s\in \Gamma (V).
\end{equation}

$\bullet$  a (fiber preserving) {\bf multiplication $\circ_{M}$ on
$TM$}, which is  commutative, associative,
with unit $e_{M}\in {\mathcal X}(M)$;\\

$\bullet$ a (fiber preserving) {\bf multiplication $\circ_{V}$ on
the bundle $V$}, which is
commutative, associative, with unit $e_{V}\in\Gamma (V)$;\\

$\bullet$ A {\bf bundle morphism}
$$
\Theta : TM\oplus V \rightarrow TM, \quad (X, v)\rightarrow \Theta
(X,v),
$$
such that
\begin{equation}\label{Theta}
\Theta (X,e_{V}) = X,\quad\forall X\in TM.
\end{equation}

We construct from the data
$(D,\circ_{M}, \circ_{V}, \Theta )$
a (fiber preserving) multiplication $\circ$ on $TV$, as follows:
$$
\tilde{X}\circ\tilde{Y} := \left[
X\circ_{M}Y\right]^{\widetilde{}}, \quad v_{1}\circ v_{2} :=
v_{1}\circ_{V}v_{2},\quad v\circ \tilde{X} = \tilde{X}\circ v :=
\left[ \Theta (X,v)\right]^{\widetilde{}}
$$
where $X,Y\in T_{p}M$ and  $v_{1}, v_{2}\in V_{p}$.
From (\ref{Theta}),
$e_{V}$ is the unit field for $\circ .$
We also require that $\circ$ is associative and commutative, which
is equivalent to
$$
\Theta (X,v) = X\circ_{M} \alpha (v), \quad\forall X\in TM,
\quad\forall v\in V,
$$
with
$$
\alpha : V\rightarrow TM,
\quad \alpha (v):= \Theta (e_{M},v)
$$
satisfying
\begin{equation}\label{v1-v2}
\alpha (v_{1}\circ_{V} v_{2}) = \alpha (v_{1})\circ_{M}\alpha (v_{2}),
\quad \alpha (e_{V}) = e_{M}.
\end{equation}
Thus, $\circ$ is given by
\begin{equation}\label{circ}
\tilde{X}\circ\tilde{Y} :=
\left[{X\circ_{M}Y}\right]^{\widetilde{}}, \quad v_{1}\circ v_{2}
:= v_{1}\circ_{V}v_{2},\quad v\circ \tilde{X} := \tilde{X}\circ v
= \left[ {\alpha (v)\circ_{M}X}\right]^{\widetilde{}}.
\end{equation}

Our main result from this section is the following.

\begin{prop}\label{multiplication} The multiplication $\circ$ defines
an $F$-manifold
structure on $V$ (with unit field $e_{V}$) if and only if the following conditions are
satisfied:\\

1) $(M, \circ_{M}, e_{M})$ is an $F$-manifold;\\

2) the connection $D$ is flat;\\

3) For any $D$-parallel (local) section $s\in \Gamma (V)$,
 \begin{equation}\label{alpha}
L_{\alpha (s)} (\circ_{M} ) =0.
 \end{equation}

4) If $s_{1},s_{2}\in\Gamma (V)$ are $D$-parallel, then so is
$s_{1}\circ_{V} s_{2}$ and, moreover,
\begin{equation}\label{croset}
[\alpha (s_{1}) , \alpha (s_{2})] =0.
\end{equation}
\end{prop}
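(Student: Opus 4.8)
The plan is to reduce the Hertling--Manin identity to a pointwise tensor computation and then to test it on a convenient frame. \emph{First}, I would record that the expression
$P(X,Y)(Z,W):=L_{X\circ Y}(\circ)(Z,W)-X\circ\bigl(L_Y(\circ)(Z,W)\bigr)-Y\circ\bigl(L_X(\circ)(Z,W)\bigr)$
is $\mathcal{O}_V$-linear in each of its four entries: linearity in $Z,W$ is the tensoriality of the Lie derivative of the $(2,1)$-tensor $\circ$, while linearity in $X$ (and, by symmetry, in $Y$) follows from a short computation in which the non-tensorial correction terms cancel precisely because $\circ$ is commutative and associative. Consequently $(V,\circ,e_V)$ is an $F$-manifold if and only if $P\equiv 0$, and this may be verified on the local frame of $TV$ consisting of horizontal lifts $\tilde X$ of vector fields on $M$ together with local sections $s$ of $V$, regarded as vertical fields; by symmetry it suffices to evaluate $P$ on all tuples drawn from this frame. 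All brackets are then governed by (\ref{curv}) and (\ref{vect-curb}), and all products by (\ref{circ}).

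\emph{Next}, I would evaluate $P$ when all four arguments are horizontal. Expanding with (\ref{curv}) and (\ref{circ}), every term of the shape $\tilde A\circ(\text{vertical})$ is thrown back into the horizontal subbundle through $\alpha$, so the \emph{vertical} component of $P(\tilde A,\tilde B)(\tilde C,\tilde D)$ is produced solely by $L_{\tilde A\circ\tilde B}(\circ)$ and equals $-R^{D}_{A\circ_M B,\;C\circ_M D}$ evaluated at the fibre point. Setting $B=D=e_M$ (so that $A\circ_M e_M=A$, $C\circ_M e_M=C$) forces $R^{D}=0$, i.e. the flatness of $D$. With $R^{D}=0$, the \emph{horizontal} component collapses to $\bigl[L_{A\circ_M B}(\circ_M)(C,D)-A\circ_M L_B(\circ_M)(C,D)-B\circ_M L_A(\circ_M)(C,D)\bigr]^{\widetilde{}}$, whose vanishing is exactly the statement that $(M,\circ_M,e_M)$ is an $F$-manifold.

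\emph{Then}, now that $D$ is flat, I would pass to a local $D$-parallel frame $\{s_i\}$, so that $[\tilde X,s_i]=D_X s_i=0$ and $[s_i,s_j]=0$, which trivialises the bracket terms, and extract the remaining conditions from three further type-combinations. Testing $P(s,\tilde X)(\tilde Y,\tilde Z)$ and invoking the already established $F$-manifold identity on $M$ gives $P(s,\tilde X)(\tilde Y,\tilde Z)=\bigl[X\circ_M\bigl(L_{\alpha(s)}(\circ_M)(Y,Z)\bigr)\bigr]^{\widetilde{}}$; choosing $X=e_M$ produces (\ref{alpha}). Testing $P(\tilde X,\tilde Y)(s_1,s_2)$, the vertical component is $D_{X\circ_M Y}(s_1\circ_V s_2)$, so setting $Y=e_M$ shows $s_1\circ_V s_2$ is $D$-parallel. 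Finally, testing $P(s_1,\tilde X)(s_2,\tilde Z)$ and using both the $F$-manifold identity on $M$ and (\ref{alpha}) — the latter saying precisely that $\alpha(s)$ is an infinitesimal automorphism (derivation) of $\circ_M$ — the expression reduces to $\bigl[[\alpha(s_1),\alpha(s_2)]\circ_M X\circ_M Z\bigr]^{\widetilde{}}$, and $X=Z=e_M$ gives (\ref{croset}). For the converse I would assume the four conditions and check that $P$ vanishes on every remaining type-combination; in the parallel frame these are routine consequences of the same bracket simplifications together with the derivation property of $\alpha(s)$.

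The main obstacle is not any single computation but the \emph{cascade of logical dependencies} combined with the careful separation of horizontal and vertical components. One must first extract flatness, from the vertical part of the all-horizontal identity, before parallel frames are even available; then feed the $F$-manifold property of $M$ into the derivation of (\ref{alpha}); and finally feed both of these into the derivation of (\ref{croset}). This last step is the most delicate, since recognising $[\alpha(s_1),\alpha(s_2)]$ as the only surviving term requires rewriting $[\alpha(s_1)\circ_M X,\alpha(s_2)]$ through the derivation property and watching the $\circ_M$-products collapse under associativity. Keeping track of which terms are horizontal and which are vertical — and of the fact that multiplication by a horizontal lift returns vertical vectors to the horizontal subbundle via $\alpha$ — is what keeps the bookkeeping under control.
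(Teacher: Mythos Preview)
Your proposal is correct and follows essentially the same route as the paper: both arguments test the Hertling--Manin tensor $P$ on combinations of horizontal lifts $\tilde X$ and vertical sections $s$, extracting the four conditions in the same logical cascade (flatness of $D$ and the $F$-manifold property of $M$ from the all-horizontal case, then (\ref{alpha}), then the parallelism of $s_1\circ_V s_2$ and (\ref{croset})). The only differences are cosmetic: you invoke the tensoriality of $P$ explicitly at the outset and separate horizontal/vertical components systematically, whereas the paper computes each combination wholesale; and you read off $D(s_1\circ_V s_2)=0$ from $P(\tilde X,\tilde Y)(s_1,s_2)$ while the paper uses $P(s_1,s_2)(\tilde Z,\tilde T)$ --- both are valid and yield the same condition.
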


\begin{proof}
We need to show that
\begin{equation}\label{F-man-ext}
L_{{\mathcal W}_{1}\circ {\mathcal W}_{2}} (\circ )({\mathcal
W}_{3}, {\mathcal W}_{4}) = {\mathcal W}_{1}\circ L_{\mathcal
W_{2}}(\circ ) (\mathcal W_{3}, \mathcal W_{4}) + \mathcal
W_{2}\circ L_{\mathcal W_{1}}(\circ ) (\mathcal W_{3}, \mathcal
W_{4})
\end{equation}
for any vector fields $\mathcal W_{1}, \mathcal W_{2}, \mathcal
W_{3}, \mathcal W_{4}\in {\mathcal X}(V)$, is equivalent to the
conditions from the statement of the proposition. Suppose first
that (\ref{F-man-ext}) holds and let $\mathcal W_{1} = \tilde{X}$,
$\mathcal W_{2} = \tilde{Y}$, $\mathcal W_{3}=\tilde{Z}$,
$\mathcal W_{4} = \tilde{T}$ be $D$-horizontal lifts, where
$X,Y,Z,T\in {\mathcal X}(M)$. Using (\ref{curv}) and the
definition of $\circ$, one can show that
$$
L_{\tilde{X}\circ\tilde{Y}} (\circ )(\tilde{Z}, \tilde{T}) =
\tilde{X}\circ L_{\tilde{Y}}(\circ )(\tilde{Z}, \tilde{T}) +
\tilde{Y}\circ L_{\tilde{X}} (\circ )(\tilde{Z},\tilde{T})
$$
if and only if
$$
R^{D}_{X\circ Y, Z\circ T} = 0
$$
and
$$
L_{X\circ Y}(\circ_{M})(Z, T)  = X\circ L_{Y}(\circ_{M})(Z,T)
+ Y\circ L_{X}(\circ_{M}) (Z, T),
$$
i.e. $D$ is flat and $(M, \circ_{M}, e_{M})$ is an $F$-manifold.
Thus, the first two conditions from the proposition hold. Next,
let $\mathcal W_{1}:=\tilde{X}$,  $\mathcal W_{2}= s\in\Gamma
(V)$, $\mathcal W_{3}= \tilde{Z}$ and $\mathcal W_{4}= \tilde{T}$.
Using (\ref{curv}) (with $R^{D}=0$) and (\ref{vect-curb}), one can
check that
$$
L_{\tilde{X}\circ s}(\circ )(\tilde{Z}, \tilde{T}) =
\tilde{X}\circ L_{s}(\circ )(\tilde{Z}, \tilde{T}) + s\circ
L_{\tilde{X}}(\circ )(\tilde{Z}, \tilde{T})
$$
if and only if
\begin{equation}\label{alpha-s}
L_{\alpha (s)} (\circ_{M}) (Z, T) = - \alpha (D_{Z\circ_{M}T}s) +
T\circ_{M} \alpha (D_{Z}s) +
Z\circ_{M}\alpha (D_{T}s),
\end{equation}
which is equivalent to (\ref{alpha}) (using that $D$ is flat).
Thus, the third condition holds as well. Consider now $\mathcal
W_{1}= s_{1}$, $\mathcal W_{2}= s_{2}$, $\mathcal W_{3}=\tilde{Z}$
and $\mathcal W_{4}= \tilde{T}.$ Using (\ref{vect-curb}) again,
one can show that
$$
L_{s_{1}\circ s_{2}}(\circ ) (\tilde{Z},\tilde{T}) = s_{1}\circ
L_{s_{2}}(\circ )(\tilde{Z},\tilde{T}) + s_{2}\circ
L_{s_{1}}(\circ )(\tilde{Z},\tilde{T})
$$
if and only if
\begin{equation}\label{D-V}
D_{Z}(s_{1}\circ_{V}s_{2}) = (D_{Z}s_{1})\circ_{V} s_{2} +
s_{1}\circ_{V} (D_{Z}s_{2}),
\end{equation}
which is equivalent to $D(s_{1}\circ_{V} s_{2}) =0$, when
$D(s_{i})=0$, $i={1,2}$ (again, because $D$ is flat). Finally, let
$\mathcal W_{1}:= \tilde{X}$, $\mathcal W_{2}= s_{2}$, $\mathcal
W_{3} =\tilde{Z}$ and $\mathcal W_{4}= s_{4}$. Using that $(M,
\circ_{M}, e_{M})$ is an $F$-manifold and (\ref{alpha-s}),  it can
be shown that
$$
L_{\tilde{X}\circ s_{2}} (\circ ) (\tilde{Z},s_{4}) =
\tilde{X}\circ L_{s_{2}} (\circ ) (\tilde{Z}, s_{4}) + s_{2}\circ
L_{\tilde{X}}(\circ )( \tilde{X}, s_{4})
$$
if and only if
$$
[\alpha (s_{2}), \alpha (s_{4})] = \alpha \left( D_{\alpha
(s_{2})}s_{4}- D_{\alpha (s_{4})}s_{2}\right) ,
$$
i.e. the fourth condition holds as well. We have proved that if
$(V, \circ , e_{V})$ is an $F$-manifold, then all conditions from
the proposition are satisfied. Conversely, assume that all
conditions are satisfied. Our previous argument shows that
(\ref{F-man-ext}) holds when all $\mathcal W_{i}$ are horizontal,
or when one is vertical and the other three are horizontal, or
when two are vertical and two are horizontal. It is easy to check
that (\ref{F-man-ext}) holds also with the remaining type of
arguments (i.e. three vertical and one horizontal, or all four
vertical). Our claim follows.
\end{proof}

\begin{rem}\label{rem-ss}{\rm  When $(M, \circ_{M}, e_{M})$ is semisimple, i.e. there is a coordinate system
$(u^{1}, \cdots , u^{n})$ (called canonical) such that
$\frac{\partial}{\partial u^{i}}\circ_{M}\frac{\partial}{\partial
u^{j}}= \delta_{ij}\frac{\partial}{\partial u^{i}}$, for any $i,
j$, condition (\ref{alpha}) implies condition (\ref{croset}). The
reason is that a vector field $X$ on a semisimple $F$-manifold
$(M, \circ_{M}, e_{M})$ satisfies $L_{X}(\circ_{M})=0$ if and only
if it is of the form $X = \sum_{k=1}^{n}
a_{k}\frac{\partial}{\partial u^{k}}$, for some constants $a_{k}$
(see e.g. \cite{manin}). Any two such vector fields commute.}
\end{rem}

\section{Admissible metrics on $(V, \circ , e_{V})$}\label{admissible}

We consider the setting of Section  \ref{four1} and we assume that
all conditions from Proposition \ref{multiplication} are
satisfied, i.e. $(V, \circ ,e_{V})$ is an $F$-manifold. We now add
to the picture a metric $g_{M}$ on $M$, invariant with respect to
$\circ_{M}$, and a metric $g_{V}$ on the bundle $V$, invariant
with respect to $\circ_{V}.$ In the remaining part of the paper we
assume that the $(2,0)$-tensor field on the manifold $V$, defined
by
\begin{equation}\label{metric}
g(\tilde{X}, \tilde{Y}) := g_{M}(X, Y), \quad g(v_{1}, v_{2}) :=
g_{V}(v_{1}, v_{2}), \quad g(v, \tilde{X}) := g_{M} (\alpha (v),
X),
\end{equation}
for any $X,Y\in TM$ and $v,v_{1},v_{2}\in V$, is non-degenerate.
This is equivalent to the non-degeneracy of the metric
$g_{V}-\alpha^{*}g_{M}$ of the bundle $V$ (easy check). Also, from
(\ref{v1-v2}) and the invariance of $g_{M}$ and $g_{V}$, the
metric $g$ is invariant on $(V,\circ ,e_{V})$.

\begin{prop}\label{admissible-cond}
Assume that $(V, \circ , e_{V})$ is an $F$-manifold.
Then the metric $g$  defined by
(\ref{metric})  is
admissible on $(V, \circ , e_{V})$ if and only if $D(g_{V}) =0$ and the coidentity
$\epsilon_{M}:= g_{M}(e_{M}, \cdot )$ is closed.
\end{prop}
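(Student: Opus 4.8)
The definition makes \emph{admissible} mean exactly that $e_V$ is parallel for the Levi--Civita connection $\nabla^{g}$ of $g$, so the whole statement reduces to showing that $\nabla^{g}_{\mathcal W}e_V=0$ for all $\mathcal W\in\mathcal X(V)$ is equivalent to the two conditions $D(g_V)=0$ and $d\epsilon_M=0$. The plan is to evaluate $g(\nabla^{g}_A e_V,C)$ through the Koszul formula, keeping $e_V$ in the middle slot, and to let $A,C$ run over the two distinguished types of vector fields on $V$: horizontal lifts $\tilde X$ and (vertical) sections $v\in\Gamma(V)$. Since $TV$ is the direct sum of these two kinds of fields, vanishing of $g(\nabla^{g}_A e_V,C)$ for all such $A,C$ is equivalent to $e_V$ being parallel.

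Before computing I would record the auxiliary facts that make the Koszul terms collapse. First, $e_V$ is $D$-parallel: the $F$-manifold hypothesis forces $\circ_V$ to be $D$-parallel (relation (\ref{D-V})), and differentiating $e_V\circ_V v=v$ then gives $(D_X e_V)\circ_V v=0$, whence $D_X e_V=0$. Second, the relevant brackets are $[\tilde X,\tilde Y]={[X,Y]}^{\widetilde{}}$ (flatness of $D$), $[\tilde X,v]=D_X v$, and $[v_1,v_2]=0$ for sections viewed as vertical fields. Third, and this is what kills most terms, every component of $g$ appearing in (\ref{metric}) is the pullback of a function on $M$; hence a vertical field (in particular $e_V$) annihilates it, while $\tilde X$ acts on it as $X$ acts on the corresponding function on $M$.

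With these in hand the four Koszul computations are routine. I expect: for $A=\tilde X$, $C=\tilde Z$ the surviving derivative and bracket terms assemble into $2g(\nabla^{g}_{\tilde X}e_V,\tilde Z)=d\epsilon_M(X,Z)$, using $\alpha(e_V)=e_M$ so that $g(e_V,\tilde X)=\epsilon_M(X)$; for the two mixed cases $A=\tilde X$, $C=w$ and $A=v$, $C=\tilde Z$ the terms reduce, after rewriting $X\bigl(g_V(e_V,\cdot)\bigr)$ as a covariant derivative and using $De_V=0$, to $\pm(D_X g_V)(e_V,\cdot)$ (the two being the same condition by symmetry of $g_V$); and for $A=v$, $C=w$ every term vanishes, so $g(\nabla^{g}_v e_V,w)=0$ automatically. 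Thus $e_V$ is parallel if and only if $d\epsilon_M=0$ and $(D_X g_V)(e_V,\cdot)=0$ for all $X$.

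The one genuinely non-formal step, and the place I would be most careful, is upgrading the contracted condition $(D_X g_V)(e_V,\cdot)=0$ to the full statement $D(g_V)=0$; a priori the Koszul formula only sees $g_V$ through its contraction with the unit, so the naive conclusion looks weaker than claimed. The resolution uses the invariance of $g_V$ together with $\circ_V$ being $D$-parallel: writing $v_1=e_V\circ_V v_1$ and applying $D_X$ to $g_V(v_1,v_2)=g_V(e_V,v_1\circ_V v_2)$, the Leibniz rule for $\circ_V$ and the vanishing of $De_V$ and of $(D_X g_V)(e_V,v_1\circ_V v_2)$ let me re-expand the right-hand side by invariance once more, so that the two Leibniz terms cancel and $(D_X g_V)(v_1,v_2)=0$ for all $v_1,v_2$. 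This identifies the contracted condition with $D(g_V)=0$ and closes both directions of the proposition.
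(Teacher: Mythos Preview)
Your proposal is correct and follows essentially the same route as the paper: establish $De_V=0$, run the Koszul formula on horizontal/vertical pairs to obtain $d\epsilon_M=0$ and the contracted condition $(D_Xg_V)(e_V,\cdot)=0$, then upgrade the latter to $D(g_V)=0$ via invariance of $g_V$ and $D$-parallelism of $\circ_V$. The paper phrases the contracted condition as $D(\epsilon_V)=0$ with $\epsilon_V:=g_V(e_V,\cdot)$ and performs the upgrade by writing $g_V(s_1,s_2)=\epsilon_V(s_1\circ_V s_2)$ and differentiating on $D$-parallel $s_1,s_2$; your Leibniz computation is the same identity unpacked. One tiny quibble: the reason the two mixed Koszul cases coincide is not ``symmetry of $g_V$'' per se but the actual Koszul calculation (or, if you like, torsion-freeness of $\nabla^g$ together with $[\tilde X,e_V]=0$), though the outcome you state is correct.
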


\begin{proof} We first remark that the unit section $e_{V}\in\Gamma (V)$
is $D$-parallel: this follows by  taking the covariant derivative
(with respect to $D$) of the equality $e_{V}\circ_{V} e_{V}
=e_{V}$ and using relation (\ref{D-V}). Thus,
\begin{equation}\label{ev-section}
[\tilde{X}, e_{V}]= D_{X}(e_{V})=0,\quad \forall X\in {\mathcal
X}(M).
\end{equation}
The Koszul formula for the Levi-Civita connection $\nabla$ of $g$,
together with (\ref{ev-section}), $\alpha (e_{V})= e_{M}$ and
$R^{D}=0$, imply that for any $Y, Z\in {\mathcal X}(M)$,
$$
g(\nabla_{\tilde{Y}}(e_{V}), \tilde{Z}) = \frac{1}{2} \left(
g_{M}(\nabla^{M}_{Y} (\alpha (e_{V})), Z) - g_{M}(
\nabla^{M}_{Z}(\alpha (e_{V})), Y)\right) =\frac{1}{2}
(d\epsilon_{M})(Y,Z),
$$
where $\nabla^{M}$ is the Levi-Civita connection of $g_{M}$.
Similarly,
$$
g(\nabla_{\tilde{Y}}(e_{V}), s) = - g(\nabla_{s}(e_{V}),
\tilde{Y})=\frac{1}{2}D_{Y}(\epsilon_{V})(s),\quad
g(\nabla_{s}e_{V}, \tilde{s})= 0,
$$
for any $s, \tilde{s}\in\Gamma (V)$, where
$\epsilon_{V}\in \Gamma (V^{*})$ is the $g_{V}$-dual to $e_{V}$, i.e.
$$
\epsilon_{V}(v) := g_{V}(e_{V}, v),\quad \forall v\in V.
$$
Thus, $\nabla (e_{V} )=0$ is equivalent to $D(\epsilon_{V}) =0$
and $d\epsilon_{M}=0.$ Now, we claim that $D(\epsilon_{V}) = 0$ is
equivalent to $D(g_{V})=0.$ This is a consequence of the relation
$$
g_{V}(s_{1},s_{2}) = \epsilon_{V}(s_{1}\circ_{V} s_{2}),
\quad\forall s_{1},s_{2}\in \Gamma ( V)
$$
and the fact that if $s_{1}$ and $s_{2}$ are $D$-parallel, then so
is $s_{1}\circ_{V} s_{2}$ (from  Proposition
\ref{multiplication}). Our claim follows.

\end{proof}

\section{Frobenius structures on $V$}\label{frobenius}

We consider the almost Frobenius structure $(\circ , e_{V}, g)$ on
$V$ constructed from the data $(D,\circ_{M},
e_{M},\circ_{V},e_{V}, g_{M},g_{V})$ as in the previous section,
and we assume that $(V,\circ , e_{V}, g)$ is an $F$-manifold with
admissible metric (hence the conditions from Propositions
\ref{multiplication} and \ref{admissible-cond} are satisfied) and
the base $(M,\circ_{M},e_{M},g_{M})$ is a Frobenius manifold. In
this section we compute the curvature of $g$ and we find
conditions on the morphism $\alpha$ which insure that $g$ is flat,
or $(V, \circ ,e_{V}, g)$ is Frobenius. Our main result is Theorem
\ref{corectat}, which describes all real Frobenius structures
$(\circ , e_{V},g)$ on $V$, with positive definite metric $g$,
when $M$ is (real) semisimple with non-vanishing rotation coefficients.\\

We begin by computing the Levi-Civita connection of $g$.

\begin{lem}\label{levi-civita}
The Levi-Civita connection $\nabla$ of $g$ is given by
\begin{align*}
&g(\nabla_{\tilde{Y}}\tilde{X}, \tilde{Z}) = g_{M}(
\nabla^{M}_{Y}X,
Z)\\
&g(\nabla_{\tilde{Y}}\tilde{X}, s)= g_{M} (\nabla_{Y}^{M}X, \alpha
(s))+ \frac{1}{2} \left( g_{M}(\nabla^{M,D}_{X}
(\alpha )(s),Y) + g_{M} (\nabla^{M,D}_{Y}(\alpha )(s),X)\right)\\
&g(\nabla_{s_{1}}\tilde{X}, s_{2}) =0\\
&g(\nabla_{s}\tilde{X}, \tilde{Y}) = \frac{1}{2}\left(
g_{M}(\nabla^{M,D}_{X}(\alpha )(s), Y) - g_{M} (\nabla^{M,D}_{Y}
(\alpha )(s), X)
\right)\\
&\nabla_{\tilde{X}} s= \nabla_{s}\tilde{X} + D_{X}s\\
&\nabla_{s_{1}}s_{2}=0,
\end{align*}
for any vector fields $X, Y, Z\in {\mathcal X}(M)$ and sections
$s, s_{1}, s_{2}\in\Gamma (V)$. Above $\nabla^{M}$ is the
Levi-Civita connection of $g_{M}$, the morphism $\alpha$ is viewed
as a section of $TM\otimes V^{*}$ and $\nabla^{M,D}$ is the
product connection $\nabla^{M}\otimes D$ on $TM\otimes V^{*}.$
\end{lem}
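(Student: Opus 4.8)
The plan is to compute the Levi-Civita connection $\nabla$ of $g$ directly from the Koszul formula, evaluating it on the three types of vector-field arguments (horizontal lifts $\tilde X$ and vertical sections $s$) and reading off each of the six listed identities. The Koszul formula expresses $2g(\nabla_A B, C)$ as a sum of three directional derivatives of metric coefficients plus three Lie-bracket terms $g([A,B],C) - g([B,C],A) + g([C,A],B)$, so the whole proof reduces to computing these brackets and derivatives for the various combinations of $\tilde X$ and $s$, then using the definition (\ref{metric}) of $g$ to simplify. The essential inputs are the bracket relations (\ref{curv}) and (\ref{vect-curb}), which here read $[\tilde X, \tilde Y] = [X,Y]^{\widetilde{}}$ (since $D$ is flat, so $R^{D}=0$) and $[\tilde X, s] = D_X s$, together with the fact that two vertical sections commute as vector fields on $V$ (they are fiberwise, so $[s_1,s_2]=0$).

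First I would handle the purely horizontal identity $g(\nabla_{\tilde Y}\tilde X, \tilde Z) = g_M(\nabla^M_Y X, Z)$: here all brackets are horizontal lifts, all metric values reduce to $g_M$ on the base via (\ref{metric}), and the Koszul formula on $V$ collapses to the Koszul formula for $g_M$ on $M$, giving the Levi-Civita connection $\nabla^M$. Next I would treat the mixed case $g(\nabla_{\tilde Y}\tilde X, s)$; this is where the derivative of $\alpha$ enters, because $g(\tilde X, s) = g_M(X, \alpha(s))$ depends on $\alpha$, and differentiating it along a horizontal direction produces the covariant derivative $\nabla^{M,D}_{\,\cdot}(\alpha)$ of $\alpha$ regarded as a section of $TM\otimes V^*$. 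The term $g_M(\nabla^M_Y X, \alpha(s))$ comes from the leading $g_M$-part, and the symmetric combination $\tfrac12(g_M(\nabla^{M,D}_X(\alpha)(s),Y) + g_M(\nabla^{M,D}_Y(\alpha)(s),X))$ arises from carefully separating the $\alpha$-dependence in the three Koszul derivative terms. The antisymmetric companion formula for $g(\nabla_s \tilde X, \tilde Y)$ is obtained the same way, the sign difference tracing back to the order of arguments in the bracket terms.

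The remaining identities are comparatively quick once the mixed case is understood. For $g(\nabla_{s_1}\tilde X, s_2)$ and $g(\nabla_{s_1}s_2, \cdot)$, I would use that $g(s_1,s_2)=g_V(s_1,s_2)$ and that vertical directions do not ``see'' the $\alpha$-twist in the fiber metric, so the relevant Koszul terms cancel in pairs; the vanishing of $\nabla_{s_1}s_2$ reflects that the fibers are flat and totally geodesic for this metric. The compatibility relation $\nabla_{\tilde X}s = \nabla_s \tilde X + D_X s$ then follows from $\nabla_{\tilde X}s - \nabla_s\tilde X = [\tilde X, s] = D_X s$, which is just the torsion-free property of $\nabla$ combined with (\ref{vect-curb}). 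The main obstacle will be the bookkeeping in the mixed case: one must correctly extend $\alpha$ to a section of $TM\otimes V^*$ and verify that the naive derivatives $\tilde Y(g_M(\alpha(s), Z))$ decompose as $g_M(\nabla^{M,D}_Y(\alpha)(s), Z) + g_M(\alpha(D_Y s),Z) + g_M(\alpha(s), \nabla^M_Y Z)$, being careful that the horizontal lift $\tilde Y$ acts on a section $s$ precisely via $D_Y s$ (by (\ref{vect-curb})), so that the $\alpha$-derivative and the $D$-derivative of $s$ are cleanly separated.
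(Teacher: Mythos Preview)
Your approach via the Koszul formula is exactly the paper's approach, and the overall structure of the argument is correct. However, you have omitted one essential hypothesis: $D(g_V)=0$, which the paper explicitly invokes (it comes from Proposition~\ref{admissible-cond}). This is not a cosmetic omission---without it, two of the six identities fail.

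Concretely, look at $g(\nabla_{s_1}\tilde X, s_2)$. The Koszul formula gives
\[
2g(\nabla_{s_1}\tilde X, s_2) = \tilde X\, g_V(s_1,s_2) - g_V(D_X s_1, s_2) - g_V(D_X s_2, s_1) = (D_X g_V)(s_1,s_2),
\]
after using $[s_1,\tilde X]=-D_X s_1$, $[\tilde X,s_2]=D_X s_2$, $[s_1,s_2]=0$, and the fact that $g(\tilde X,s_i)=g_M(X,\alpha(s_i))$ is constant along fibers. This vanishes \emph{only} because $D(g_V)=0$. The same issue arises in showing $g(\nabla_{s_1}s_2,\tilde Z)=0$: the Koszul terms give $-(D_Z g_V)(s_1,s_2)$, not an automatic cancellation. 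Your explanation that ``vertical directions do not see the $\alpha$-twist'' and that ``the fibers are flat and totally geodesic'' is not the operative mechanism here; what makes the fibers totally geodesic is precisely the parallelism of $g_V$ under $D$. Add this hypothesis to your list of essential inputs and the proof goes through as you describe.
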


\begin{proof}
The proof is a straightforward computation, which uses the
flatness of $D$ (which implies
$[\tilde{X},\tilde{Y}]={[X,Y]}^{\widetilde{}}$, for any $X,Y\in
{\mathcal X}(M)$) and $D(g_{V})=0$ (from Proposition
\ref{admissible-cond}).

\end{proof}

\begin{rem}\label{variable-ajut}{\rm In the above lemma, assume that $\nabla^{M, D}(\alpha )=0$.
Then a vector field is $\nabla$-parallel if and only if it is of
the form $\tilde{X} + s$, where $X$ is $\nabla^{M}$-parallel and
$s$ is a $D$-parallel section of $V$, viewed as a tangent vertical
vector field on $V$; since $g_{M}$ and $D$ are flat, locally there
is a maximal number of parallel vector fields on $(V, g)$ and $g$
is flat as well. This applies to the particular case when $V$ is
the trivial bundle of rank one and the almost Frobenius structure
on $V$ is given by adding a variable to $(M,
\circ_{M},e_{M},g_{M})$; in this case, $\alpha (e_{V})=e_{M}$ and
thus $\nabla^{M,D}(\alpha )=0$ (because $\nabla^{M}(e_{M})=0$ and
$D(e_{V})=0$). }
\end{rem}

Next, we compute the curvature of $g$. For this, we first
introduce new notations, as follows. For a tangent vector
${\mathcal W}\in T_{v}V$, we denote by ${\mathcal
W}^{\mathrm{hor}}$ and ${\mathcal W}^{\mathrm{vert}}$ its
horizontal and vertical components with respect to the
decomposition
$$
T_{v}V = T_{p}M \oplus V_{p}
$$
induced by the connection $D$. From the definition of $g$, a
tangent vector ${\mathcal W}\in T_{v}V$ belongs to
$(T_{p}M)^{\perp}$ (i.e. is $g$-orthogonal to $T_{p}M\subset
T_{v}V$) if and only if
\begin{equation}\label{componente}
{\mathcal W} = - \alpha ({\mathcal W}^{\mathrm{vert}}) + {\mathcal
W}^{\mathrm{vert}}.
\end{equation}
Similarly, $W\in (V_{p})^{\perp}$
(i.e. is $g$-orthogonal to $V_{p}\subset T_{v}V$)
if and only if
\begin{equation}\label{componente2}
g_{M} ({\mathcal W}^{\mathrm{hor}}, \alpha (w)) + g_{V}({\mathcal
W}^{\mathrm{vert}},w)=0, \quad \forall w\in V_{p}.
\end{equation}

\begin{lem}\label{curvature}
The curvature $R^{\nabla}$ of the metric $g$ has the following expression: for
any $\nabla^{M}$-parallel vector fields $X, Y, Z, T\in {\mathcal
X}(M)$ and $D$-parallel sections $s, s_{1},s_{2}\in\Gamma (V)$,

\begin{align*}
R^{\nabla}(\tilde{X}, \tilde{Y}, \tilde{Z}, \tilde{T}) &=
(g_{V}-\alpha^{*}g_{M})((\nabla_{\tilde{X}}\tilde{Z})^{\mathrm{vert}},
(\nabla_{\tilde{Y}}\tilde{T})^{\mathrm{vert}})\\
&- (g_{V}-\alpha^{*}g_{M})
((\nabla_{\tilde{Y}}\tilde{Z})^{\mathrm{vert}},
(\nabla_{\tilde{X}}
\tilde{T})^{\mathrm{vert}});\\
R^{\nabla}(\tilde{X}, \tilde{Y}, \tilde{Z}, s) &=\frac{1}{2}\left(
g_{M} (\nabla^{M}_{X} [\nabla^{M}_{Z}(\alpha (s))], Y )- g_{M}
(\nabla^{M}_{Y}
[\nabla^{M}_{Z}(\alpha (s))],X)\right)\\
& +\left( g_{V}-\alpha^{*}g_{M}\right) \left(
(\nabla_{\tilde{X}}\tilde{Z})^{\mathrm{vert}} ,
(\nabla_{\tilde{Y}}s)^{\mathrm{vert}}\right)\\
&-\left(g_{V}- \alpha^{*}g_{M}\right)\left( (\nabla_{\tilde{Y}}
\tilde{Z})^{\mathrm{vert}} , (\nabla_{\tilde{X}}s)^{\mathrm{vert}}\right) ;\\
R^{\nabla}(\tilde{X},\tilde{Y},s_{1},s_{2})&= - g_{M}(
(\nabla_{\tilde{Y}}s_{1})^{\mathrm{hor}},
(\nabla_{\tilde{X}}s_{2})^{\mathrm{hor}}) + g_{V}
((\nabla_{\tilde{X}}s_{2})^{\mathrm{vert}},
(\nabla_{\tilde{Y}}s_{1})^{\mathrm{vert}})\\
& + g_{M}((\nabla_{\tilde{X}}s_{1})^{\mathrm{hor}},
(\nabla_{\tilde{Y}}s_{2})^{\mathrm{hor}}) - g_{V} (
(\nabla_{\tilde{Y}}s_{2})^{\mathrm{vert}},
(\nabla_{\tilde{X}}s_{1})^{\mathrm{vert}});\\
R^{\nabla}(\tilde{X},s_{1},\tilde{Y},s_{2})&= g_{V}(
(\nabla_{\tilde{X}}s_{2})^{\mathrm{vert}},
(\nabla_{s_{1}}\tilde{Y})^{\mathrm{vert}}) -g_{M} (
(\nabla_{s_{1}}\tilde{Y})^{\mathrm{hor}},
(\nabla_{\tilde{X}}s_{2})^{\mathrm{hor}})\\
R^{\nabla}(s_{1}, s_{2})(s)&=0.
\end{align*}
\end{lem}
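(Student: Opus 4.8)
The plan is to exploit that the curvature is evaluated on the frame of $\nabla^{M}$-parallel horizontal lifts $\tilde{X},\tilde{Y},\tilde{Z},\tilde{T}$ and $D$-parallel vertical sections $s,s_{1},s_{2}$, for which all Lie brackets vanish: $[\tilde{X},\tilde{Y}]={[X,Y]}^{\widetilde{}}=0$ by (\ref{curv}) and $R^{D}=0$, $[\tilde{X},s]=D_{X}s=0$ by (\ref{vect-curb}), and $[s_{1},s_{2}]=0$ since sections of $V$ commute as vertical vector fields. Hence for every four-tuple $R^{\nabla}(W_{1},W_{2},W_{3},W_{4})=g(\nabla_{W_{1}}\nabla_{W_{2}}W_{3},W_{4})-g(\nabla_{W_{2}}\nabla_{W_{1}}W_{3},W_{4})$. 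Using $\nabla g=0$ to trade each second-order term for a directional derivative of a first-order pairing, I would rewrite this as
\begin{equation*}
R^{\nabla}(W_{1},W_{2},W_{3},W_{4})=W_{1}\,g(\nabla_{W_{2}}W_{3},W_{4})-W_{2}\,g(\nabla_{W_{1}}W_{3},W_{4})-g(\nabla_{W_{2}}W_{3},\nabla_{W_{1}}W_{4})+g(\nabla_{W_{1}}W_{3},\nabla_{W_{2}}W_{4}),
\end{equation*}
splitting every case into a ``derivative part'' (first two terms) and a ``quadratic part'' (last two). The whole computation then reduces to evaluating these two parts with the explicit formulas of Lemma \ref{levi-civita}.

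For the derivative part, the first-order pairings of Lemma \ref{levi-civita} all become, on parallel fields, functions pulled back from $M$ and written through $\nabla^{M}$ and $\alpha$. A horizontal lift $\tilde{X}$ differentiates such a function as $X$ does on $M$, producing only second $\nabla^{M}$-derivatives of $\alpha(s)$ (since $g_{M}$ and the frame are parallel), while a vertical section annihilates it outright. Thus in the all-horizontal and the two-section cases the derivative part vanishes, whereas in the $(\tilde{X},\tilde{Y},\tilde{Z},s)$ case it yields $\tfrac12\big(g_{M}(\nabla^{M}_{X}[\nabla^{M}_{Z}(\alpha(s))],Y)-g_{M}(\nabla^{M}_{Y}[\nabla^{M}_{Z}(\alpha(s))],X)\big)$ together with a leftover $\tfrac12\,g_{M}(R^{\nabla^{M}}(X,Y)(\alpha(s)),Z)$; here I would invoke that the base is Frobenius, so $g_{M}$ is flat, $R^{\nabla^{M}}=0$, and this leftover disappears.

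The core of the argument is the quadratic part, handled by the orthogonality relations (\ref{componente}) and (\ref{componente2}). From Lemma \ref{levi-civita} one reads off that, for parallel $X,Z$, the vector $\nabla_{\tilde{X}}\tilde{Z}$ is $g$-orthogonal to $T_{p}M$, hence by (\ref{componente}) has the form $-\alpha(u)+u$ with $u$ its vertical part; and that $\nabla_{\tilde{X}}s=\nabla_{s}\tilde{X}$ (using $D_{X}s=0$) is $g$-orthogonal to $V_{p}$. Substituting these shapes into the block form of $g$ collapses the pairings in two ways: whenever one factor is orthogonal to $T_{p}M$ one gets $g(A,B)=(g_{V}-\alpha^{*}g_{M})(A^{\mathrm{vert}},B^{\mathrm{vert}})$, and whenever both factors are orthogonal to $V_{p}$ one gets $g(A,B)=g_{M}(A^{\mathrm{hor}},B^{\mathrm{hor}})-g_{V}(A^{\mathrm{vert}},B^{\mathrm{vert}})$. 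Applying the first collapse in the $(\tilde{X},\tilde{Y},\tilde{Z},\tilde{T})$ and $(\tilde{X},\tilde{Y},\tilde{Z},s)$ cases and the second in the $(\tilde{X},\tilde{Y},s_{1},s_{2})$ and $(\tilde{X},s_{1},\tilde{Y},s_{2})$ cases reproduces exactly the stated right-hand sides, after discarding $g(\nabla_{\tilde{X}}\tilde{Y},\nabla_{s_{1}}s_{2})$ in the last case via $\nabla_{s_{1}}s_{2}=0$. The identity $R^{\nabla}(s_{1},s_{2})(s)=0$ is then immediate, since $\nabla_{s_{i}}s_{j}=0$ and $[s_{1},s_{2}]=0$.

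I expect the main obstacle to be bookkeeping rather than conceptual: keeping the horizontal/vertical decompositions and the non-orthogonality of the two distributions straight while collapsing the quadratic pairings, and in particular verifying carefully in the $(\tilde{X},\tilde{Y},\tilde{Z},s)$ case that the only surviving derivative contribution is the displayed $\nabla^{M}\nabla^{M}(\alpha(s))$ term, with the would-be base-curvature term eliminated by the flatness of $g_{M}$.
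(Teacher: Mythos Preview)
Your plan is correct and follows essentially the same route as the paper's proof: write the curvature via $\nabla g=0$ as a derivative part plus a quadratic part, kill the derivative terms using Lemma~\ref{levi-civita} on $\nabla^{M}$-parallel and $D$-parallel fields, and reduce the quadratic pairings via the orthogonality relations (\ref{componente}) and (\ref{componente2}). The paper only spells out the all-horizontal case and then says the rest is similar; your two ``collapse'' identities make that similarity explicit and are exactly what is happening in the paper's computation. Your observation that the $(\tilde{X},\tilde{Y},\tilde{Z},s)$ case produces a leftover $\tfrac12\,g_{M}(R^{\nabla^{M}}(X,Y)\alpha(s),Z)$, eliminated by the standing hypothesis that $(M,\circ_{M},e_{M},g_{M})$ is Frobenius (hence $g_{M}$ flat), is correct and is implicit in the paper's setup for Section~\ref{frobenius}.
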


\begin{proof}
We compute $R^{\nabla}(\tilde{X}, \tilde{Y}, \tilde{Z},
\tilde{T})$, where $X, Y, Z, T\in {\mathcal X}(M)$ are
$\nabla^{M}$-parallel, as follows:
\begin{align*}
R^{\nabla}(\tilde{X}, \tilde{Y}, \tilde{Z}, \tilde{T})&=
g\left(\nabla_{\tilde{X}}\nabla_{\tilde{Y}}\tilde{Z} -
\nabla_{\tilde{Y}}\nabla_{\tilde{X}}\tilde{Z} -
\nabla_{[\tilde{X},
\tilde{Y}]}\tilde{Z}, \tilde{T}\right)\\
&=\tilde{X}g\left(\nabla_{\tilde{Y}}\tilde{Z}, \tilde{T}\right) -
g\left(\nabla_{\tilde{Y}}\tilde{Z}, \nabla_{\tilde{X}}\tilde{T}\right)\\
& - \tilde{Y}g\left(\nabla_{\tilde{X}}\tilde{Z}, \tilde{T}\right)
+
g\left( \nabla_{\tilde{X}}\tilde{Z}, \nabla_{\tilde{Y}}\tilde{T}\right)\\
&= - g\left(\nabla_{\tilde{Y}}\tilde{Z},
\nabla_{\tilde{X}}\tilde{T}\right) +
g\left(\nabla_{\tilde{X}}\tilde{Z},\nabla_{\tilde{Y}}\tilde{T}\right),
\end{align*}
where we used that $\nabla_{\tilde{Y}}\tilde{Z}$,
$\nabla_{\tilde{X}}\tilde{Z}$ are $g$-orthogonal to $TM$ (from
Lemma \ref{levi-civita}) and $[\tilde{X}, \tilde{Y}] = 0$, because
$R^{D}=0$ and $[X,Y]=0.$ Thus:
\begin{equation}\label{ajut-r}
R^{\nabla}(\tilde{X}, \tilde{Y}, \tilde{Z}, \tilde{T})= -
g(\nabla_{\tilde{Y}}\tilde{Z}, \nabla_{\tilde{X}}\tilde{T}) +
g(\nabla_{\tilde{X}}\tilde{Z},\nabla_{\tilde{Y}}\tilde{T}).
\end{equation}
On the other hand, since $\nabla_{\tilde{Y}}\tilde{Z}$ and
$\nabla_{\tilde{X}}\tilde{T}$ are $g$-orthogonal to $TM$, from
(\ref{componente}),
\begin{equation}\label{comp-nabla}
(\nabla_{\tilde{Y}}\tilde{Z})^{\mathrm{hor}} = -\alpha
((\nabla_{\tilde{Y}}\tilde{Z})^{\mathrm{vert}})
\end{equation}
and similarly for $\nabla_{\tilde{X}}\tilde{T}.$ We obtain:
\begin{align*}
g(\nabla_{\tilde{Y}}\tilde{Z}, \nabla_{\tilde{X}}\tilde{T})& = g(
\nabla_{\tilde{Y}}\tilde{Z},
(\nabla_{\tilde{X}}\tilde{T})^{\mathrm{hor}}
+ (\nabla_{\tilde{X}}\tilde{T})^{\mathrm{vert}}) \\
&= g\left( (\nabla_{\tilde{Y}}\tilde{Z})^{\mathrm{hor}} +
(\nabla_{\tilde{Y}}\tilde{Z})^{\mathrm{vert}},
(\nabla_{\tilde{X}}\tilde{T})^{\mathrm{vert}}\right)\\
& = g_{M} ( (\nabla_{\tilde{Y}}\tilde{Z})^{\mathrm{hor}}, \alpha (
(\nabla_{\tilde{X}}\tilde{T})^{\mathrm{vert}})) + g_{V} (
(\nabla_{\tilde{Y}}\tilde{Z})^{\mathrm{vert}},
(\nabla_{\tilde{X}}\tilde{T})^{\mathrm{vert}})\\
& = (g_{V} -
\alpha^{*}g_{M})((\nabla_{\tilde{Y}}\tilde{Z})^{\mathrm{vert}},
(\nabla_{\tilde{X}}\tilde{T})^{\mathrm{vert}}),
\end{align*}
where in the last equality we used (\ref{comp-nabla}). Combining
this relation with (\ref{ajut-r}) we obtain $R^{\nabla}(\tilde{X},
\tilde{Y}, \tilde{Z},\tilde{T})$, as required. The remaining
expressions of the curvature can be obtained in a similar way.
\end{proof}

In the following proposition we determine a set of conditions on
the morphism $\alpha$ which are equivalent to the flatness of $g$.

\begin{prop}\label{cond-flatness}
Assume that $(V, \circ,e_{V},g)$ is an $F$-manifold with
admissible metric. Let $\{ v_{1},\cdots , v_{r}\}$ be a local
frame of $V$, orthonormal with respect to $g_{V}-\alpha^{*}g_{M}$,
and $X_{1}, \cdots , X_{n}$ a local frame of $TM$, orthonormal
with respect to $g_{M}.$ Define $\epsilon_{i}:=
(g_{V}-\alpha^{*}g_{M})(v_{i}, v_{i})$ and $\tilde{\epsilon}_{j}:=
g(X_{j}, X_{j})$ (thus, $\epsilon_{i}= \tilde{\epsilon}_{j}=1$ in
the complex case or when both $g_{M}$ and $g_{V}- \alpha^{*}g_{M}$
are positive definite). The metric $g$ is flat (and $(V, \circ ,
e_{V}, g)$ is Frobenius)
if and only if the following conditions hold:\\

1) for any vector fields $X,Y,Z,T\in {\mathcal X}(M)$, the expression
\begin{align*}
S_{1}(X, Y, Z, T) :=&\sum_{i=1}^{r}\epsilon_{i} \left( g_{M}
(\nabla^{M,D}_{Y}(\alpha ) (v_{i}), Z)
+ g_{M} (\nabla^{M,D}_{Z}(\alpha )(v_{i}), Y)\right)\\
&\left( g_{M} (\nabla^{M,D}_{X}(\alpha ) (v_{i}), T)
+ g_{M} (\nabla^{M}_{T}(\alpha )(v_{i}), X)\right)\\
\end{align*}
is symmetric in $X$ and $Y$.\\

2) for any (local) $\nabla^{M}$-parallel vector fields $X,Y,Z\in{\mathcal
X}(M)$ and $D$-parallel section
$s\in \Gamma (V)$, the expression
$$
S_{2}(X,Y,Z, s):=
\sum_{i=1}^{r}\epsilon_{i}
d(\alpha (s)^{\flat})(\alpha (v_{i}), Y)
\left( g_{M}(\nabla^{M,D}_{X}(\alpha )
(v_{i}), Z)
+ g_{M}( \nabla^{M,D}_{Z}(\alpha )(v_{i}), X)\right)
$$
(where the ``$\flat$``  denotes the $g_{M}$-dual $1$-form) satisfies
$$
S_{2}(X, Y, Z, s) -
S_{2}(Y, X, Z, s)
= 2\left( g_{M}(\nabla^{M}_{Y}
\nabla^{M}_{Z}(\alpha (s)), X)
- g_{M} (\nabla^{M}_{X}\nabla^{M}_{Z} (\alpha (s)), Y)\right) .
$$

3) for any vector fields $X, Y\in{\mathcal X}(M)$
and (local) $D$-parallel sections $s,\tilde{s}\in \Gamma (V)$,

\begin{align}
\nonumber\sum_{j=1}^{n}\tilde{\epsilon}_{j}\left( d\alpha (s)^{\flat}\right) (Y, X_{j}) \left(
d\alpha (\tilde{s})^{\flat}\right) (X, X_{j})+\\
\label{reparat} \sum_{i=1}^{r}
\epsilon_{i}\left( d\alpha (s)^{\flat}\right) (Y, \alpha (v_{i})) \left( d\alpha
(\tilde{s})^{\flat}\right) (X, \alpha (v_{i}))=0.
\end{align}

\end{prop}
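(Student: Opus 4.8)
The plan is to deduce the three conditions directly from the curvature formulas of Lemma~\ref{curvature}, by expressing the ``flatness'' requirement $R^{\nabla}=0$ component-by-component in the adapted frame. Since $(V,\circ,e_{V},g)$ is already an $F$-manifold with admissible metric, by Definition~1 the only remaining condition for it to be Frobenius is that $g$ be flat, so it suffices to show $R^{\nabla}=0$ is equivalent to conditions 1)--3). The Levi-Civita connection of $g_{M}$ being flat (as $M$ is Frobenius) means I may choose the local frames $X_{1},\dots,X_{n}$ to be $\nabla^{M}$-parallel and $v_{1},\dots,v_{r}$ to be $D$-parallel; this simultaneously orthonormalizes the frames (up to the signs $\tilde{\epsilon}_{j},\epsilon_{i}$) and kills the connection terms, so the four nonzero curvature components of Lemma~\ref{curvature} simplify substantially.

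First I would treat the component $R^{\nabla}(\tilde{X},\tilde{Y},\tilde{Z},\tilde{T})$. Using formula (\ref{comp-nabla}) to trade horizontal parts of $(\nabla_{\tilde{X}}\tilde{Z})$ for $-\alpha$ of their vertical parts, I expand the vertical components in the orthonormal frame $\{v_{i}\}$. By Lemma~\ref{levi-civita} the quantity $g((\nabla_{\tilde{X}}\tilde{Z})^{\mathrm{vert}},v_{i})$ is exactly $\tfrac12(g_{M}(\nabla^{M,D}_{X}(\alpha)(v_{i}),Z)+g_{M}(\nabla^{M,D}_{Z}(\alpha)(v_{i}),X))$ up to the $g_{M}(\nabla^M_X X,\cdot)$ piece which vanishes on parallel frames. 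Substituting into the antisymmetrized product gives precisely the expression $S_{1}$, and vanishing of this curvature component is the requirement that $S_{1}(X,Y,Z,T)$ be symmetric in $X\leftrightarrow Y$; this yields condition 1). Next, the mixed component $R^{\nabla}(\tilde{X},\tilde{Y},\tilde{Z},s)$ splits into a purely horizontal second-derivative term, which I identify with the right-hand side of condition 2) after using $d(\alpha(s)^{\flat})$ to encode $\nabla^M\nabla^M(\alpha(s))$, plus a pair of vertical inner products handled exactly as above; matching terms produces $S_{2}$ and condition 2). Finally $R^{\nabla}(\tilde{X},\tilde{Y},s_{1},s_{2})$ expands via (\ref{componente}) and (\ref{componente2}) into horizontal and vertical sums over the two frames, and after antisymmetrization the $(\nabla_{\tilde{X}}s)^{\mathrm{hor}}$ and $(\nabla_{\tilde{X}}s)^{\mathrm{vert}}$ contractions assemble into the two sums of (\ref{reparat}), giving condition 3). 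The component $R^{\nabla}(\tilde{X},s_{1},\tilde{Y},s_{2})$ is not independent: the first Bianchi identity together with the already-treated components forces it to vanish automatically once 1)--3) hold, and $R^{\nabla}(s_{1},s_{2})(s)=0$ is given outright by Lemma~\ref{curvature}.

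The main bookkeeping obstacle is the translation between the intrinsic data appearing in Lemma~\ref{curvature}, namely $(\nabla_{\tilde{X}}\tilde{Z})^{\mathrm{vert}}$ and $(\nabla_{\tilde{X}}s)^{\mathrm{hor}}$, and the explicit tensor $\nabla^{M,D}(\alpha)$ together with the exterior derivatives $d(\alpha(s)^{\flat})$ used in the statement. The key identity I will need is that, for $D$-parallel $s$, the horizontal component $(\nabla_{\tilde{X}}s)^{\mathrm{hor}}$ is controlled by the antisymmetric part of $\nabla^{M,D}(\alpha)$, which is exactly $d(\alpha(s)^{\flat})$ once one observes that $\nabla^{M}$ is torsion-free and that the symmetric part has been absorbed into the admissibility/unit conditions; care is needed to confirm that the terms involving $\nabla^{M}_{X}X$ genuinely drop out so that the final expressions are tensorial and frame-independent despite having been derived in a parallel frame. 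The hard part will be ensuring that the somewhat asymmetric-looking definitions of $S_{1}$ and $S_{2}$ (which mix $\nabla^{M,D}$ and, in one summand of $S_1$, a bare $\nabla^{M}$) are reproduced with exactly the stated index placement and factors of $\tfrac12$, and that the Bianchi argument eliminating $R^{\nabla}(\tilde{X},s_{1},\tilde{Y},s_{2})$ as an independent condition is watertight; everything else is the routine but lengthy expansion of the five curvature formulas in the chosen frame.
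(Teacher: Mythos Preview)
Your overall strategy---compute the horizontal and vertical parts of $\nabla_{\tilde X}\tilde Y$ and $\nabla_{s}\tilde X$ in a $\nabla^{M}$-parallel, $D$-parallel orthonormal frame and plug into the curvature formulas of Lemma~\ref{curvature}---is exactly what the paper does. Conditions 1) and 2) do indeed arise from $R^{\nabla}(\tilde X,\tilde Y,\tilde Z,\tilde T)$ and $R^{\nabla}(\tilde X,\tilde Y,\tilde Z,s)$ as you describe.

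However, your assignment of the last two components is reversed, and the Bianchi argument as you state it fails. Condition 3) is \emph{not} antisymmetric in $(X,Y)$ or in $(s,\tilde s)$; it is a condition required for all choices, with only the pair symmetry $(Y,s)\leftrightarrow (X,\tilde s)$. This matches the component $R^{\nabla}(\tilde X,s_{1},\tilde Y,s_{2})$ (type $R_{a\alpha b\beta}$), and indeed the paper's explicit formulas for $(\nabla_{s}\tilde X)^{\mathrm{hor}}$ and $(\nabla_{s}\tilde X)^{\mathrm{vert}}$ are precisely what you need to turn that component into~\eqref{reparat}. The first Bianchi identity gives only
\[
R^{\nabla}(\tilde X,s_{1},\tilde Y,s_{2})-R^{\nabla}(\tilde Y,s_{1},\tilde X,s_{2})=R^{\nabla}(\tilde X,\tilde Y,s_{1},s_{2}),
\]
so vanishing of $R^{\nabla}(\tilde X,\tilde Y,s_{1},s_{2})$ forces only the part of $R^{\nabla}(\tilde X,s_{1},\tilde Y,s_{2})$ antisymmetric in $X,Y$ to vanish; the symmetric part survives and is not controlled by the other components. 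In the correct direction, once condition 3) (i.e.\ $R^{\nabla}(\tilde X,s_{1},\tilde Y,s_{2})=0$) is established, Bianchi then gives $R^{\nabla}(\tilde X,\tilde Y,s_{1},s_{2})=0$ for free. So swap the roles of these two components in your argument and the proof goes through.
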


\begin{proof} From Lemma \ref{levi-civita}, relation (\ref{componente}) and
(\ref{componente2}),
for any (local) $\nabla^{M}$-parallel vector fields $X,Y,Z\in {\mathcal X}(M)$
and $D$-parallel (local) section $s\in\Gamma (V)$,

\begin{align*}
(\nabla_{\tilde{X}}\tilde{Y})^{\mathrm{vert}}&=
\frac{1}{2}\sum_{i=1}^{r}\epsilon_{i}\left( g_{M}(
\nabla^{M,D}_{X}(\alpha ) (v_{i}),Y) +
g_{M}(\nabla^{M,D}_{Y}(\alpha )(v_{i}),X)\right)
v_{i};\\
(\nabla_{\tilde{X}}\tilde{Y})^{\mathrm{hor}}&=-
\frac{1}{2}\sum_{i=1}^{r}\epsilon_{i}\left( g_{M}(
\nabla^{M,D}_{X}(\alpha ) (v_{i}),Y) +
g_{M}(\nabla^{M,D}_{Y}(\alpha )(v_{i}),X)\right)
\alpha (v_{i});\\
(\nabla_{s}\tilde{X})^{\mathrm{vert}}&= \frac{1}{2}\sum_{i=1}^{r}
 \epsilon_{i}(d\alpha (s)^{\flat})(\alpha (v_{i}), X)v_{i};\\
(\nabla_{s}\bar{X})^{\mathrm{hor}}&= \frac{1}{2}
\nabla^{M}_{X}\left(\alpha (s)\right)-\frac{1}{2}\sum_{j=1}^{n}
\tilde{\epsilon}_{j}g_{M}(\nabla^{M}_{X_{j}}(\alpha (s)), X)X_{j} \\
& +\frac{1}{2}\sum_{i=1}^{r}\epsilon_{i}
(d\alpha (s)^{\flat})(X, \alpha (v_{i}))
\alpha (v_{i}).
\end{align*}
Plugging these relations into the expression of $R^{\nabla}$ from
Lemma \ref{curvature} we readily get our claim.
\end{proof}

After this preliminary material, we can now prove our main result from this section (Theorem \ref{corectat} below).
It turns out that if the base $(M, \circ_{M},e_{M}, g_{M})$ is a semisimple Frobenius manifold, the third condition of Proposition
\ref{cond-flatness} simplifies considerably and allows, in the real case, a complete description of all Frobenius structures on $V$, with positive definite
metric, obtained by our method. Recall first that for a semisimple Frobenius manifold the metric is diagonal in canonical coordinates $(u^{1},\cdots , u^{n})$ and may be written
as in (\ref{g-m}) below, in terms of a single function $\eta$, called the metric potential (to simplify notations, we denote by $\eta_{k}$ the derivative
$\frac{\partial\eta}{\partial u^{k}}$; similarly, $\eta_{ij}$ denotes $\frac{\partial^{2}\eta}{\partial u^{i}\partial u^{j}}$, etc).
For more details about semisimple Frobenius manifolds, see e.g. \cite{manin-book}.

\begin{thm}\label{corectat}
Let $(M, \circ_{M}, e_{M}, g_{M})$ be a real semisimple Frobenius
manifold with metric
\begin{equation}\label{g-m}
g_{M} =\sum_{k=1}^{n}\eta_{k}du^{k}\otimes du^{k}
\end{equation}
and non-vanishing rotation coefficients $\gamma_{ij} =
\frac{\eta_{ij}}{\sqrt{\eta_{i}\eta_{j}}}.$ Let $V\rightarrow M$
be a real vector bundle
with a structure of Frobenius algebra
$(\circ_{V}, e_{V}, g_{V})$ along the fibers.
Let
$D$ be a connection on $V$ and $\alpha : V\rightarrow TM$ a
morphism such that
\begin{equation}\label{alpha-alg}
\alpha (e_{V}) =e_{M},\quad \alpha (v_{1}\circ_{V}v_{2}) =\alpha (v_{1})\circ_{M}\alpha
(v_{2}), \quad\forall v_{1},v_{2}\in V.
\end{equation}
Then the almost Frobenius structure $(\circ , e_{V}, g )$ on $V$
defined by this data (see relations (\ref{circ}) and
(\ref{metric})) is Frobenius
with positive definite metric if and only if the following facts hold:\\

1)  the connection $D$ is flat and the Frobenius algebra $(\circ_{V}, e_{V},
g_{V})$ is $D$-parallel;\\

2) the endomorphism $\alpha$ is given by
\begin{equation}\label{key-eqn}
\alpha = \lambda \otimes e_{M}
\end{equation}
where $\lambda \in \Gamma (V^{*})$ is $D$-parallel and
satisfies
\begin{equation}\label{lambda}
\lambda (e_{V})=1,\quad \lambda (s_{1}\circ_{V}s_{2}) = \lambda (s_{1})\lambda
(s_{2}),\quad \forall s_{1}, s_{2}\in \Gamma (V).
\end{equation}

3) $\eta_{k}>0$ for any $1\leq k\leq n$ and $g_{V}-\left( \sum_{k=1}^{n}\eta_{k}\right) \lambda\otimes\lambda$ are positive definite.\\

\end{thm}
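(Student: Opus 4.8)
The plan is to exploit the special structure of a semisimple Frobenius manifold to drastically simplify the three flatness conditions of Proposition \ref{cond-flatness}, and then to show that positive-definiteness of $g$ forces $\alpha$ into the rank-one form (\ref{key-eqn}). First I would fix canonical coordinates $(u^1,\dots,u^n)$, so that $e_M=\sum_k\partial/\partial u^k$, the metric is diagonal as in (\ref{g-m}), and the Levi-Civita connection $\nabla^M$ has the explicit Christoffel symbols expressible through the rotation coefficients $\gamma_{ij}$. Writing the morphism $\alpha$ in components as $\alpha=\sum_k \alpha^k \otimes \partial/\partial u^k$ with $\alpha^k\in\Gamma(V^*)$, I would compute $\nabla^{M,D}_X(\alpha)$ and the one-forms $d(\alpha(s)^\flat)$ appearing in Proposition \ref{cond-flatness} in these coordinates. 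The key geometric input is that the normalized idempotents $f_k:=\frac{1}{\sqrt{\eta_k}}\partial/\partial u^k$ form an orthonormal frame, and the structure equations of a semisimple Frobenius manifold relate $\nabla^M f_k$ to the $\gamma_{ij}$.

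Next I would analyze condition (3) of Proposition \ref{cond-flatness}, i.e. equation (\ref{reparat}), which is the cleanest to exploit. Because $g$ is assumed positive definite, both sums in (\ref{reparat}) are sums of products of the form $(\text{term})\cdot(\text{same term with }s\leftrightarrow\tilde s)$ weighted by \emph{positive} coefficients $\tilde\epsilon_j=\epsilon_i=1$. Taking $s=\tilde s$ turns (\ref{reparat}) into a sum of squares equal to zero, which forces every individual term $d(\alpha(s)^\flat)(Y,X_j)$ and $d(\alpha(s)^\flat)(Y,\alpha(v_i))$ to vanish for all $D$-parallel $s$; since the $X_j$ span $TM$ this says $d(\alpha(s)^\flat)=0$, i.e. each $\alpha(s)^\flat$ is closed. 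Combined with condition (2) this pins down the second covariant derivatives $\nabla^M_Y\nabla^M_Z(\alpha(s))$. The positivity of $g$ is thus the crucial lever: it is exactly what converts vanishing-of-a-signed-sum into vanishing-of-each-term.

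I would then translate the closedness of $\alpha(s)^\flat$ together with the $F$-manifold condition (\ref{alpha}) (which on a semisimple base forces $\alpha(s)$ to be a constant-coefficient combination $\sum_k a_k(s)\partial/\partial u^k$ for $D$-parallel $s$, by Remark \ref{rem-ss}) into differential relations on the coefficient functions. The equation $L_{\alpha(s)}(\circ_M)=0$ says the $a_k(s)$ are constant in the $u$-coordinates, while closedness of $\alpha(s)^\flat=\sum_k \eta_k a_k(s)\,du^k$ reads $\partial_i(\eta_j a_j)=\partial_j(\eta_i a_i)$, i.e. $a_j\eta_{ij}=a_i\eta_{ij}$, whence $(a_i-a_j)\gamma_{ij}\sqrt{\eta_i\eta_j}=0$. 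Here the hypothesis that all rotation coefficients $\gamma_{ij}$ are non-vanishing is decisive: it forces $a_i(s)=a_j(s)$ for all $i,j$, so $\alpha(s)=a(s)\,e_M$ is always a multiple of the unit field. Defining $\lambda(s):=a(s)$ gives $\alpha=\lambda\otimes e_M$; the relations (\ref{lambda}) then follow from (\ref{alpha-alg}) and $D$-parallelism of $\lambda$ from that of $e_M$ and the already-established $D$-parallelism of the fiber algebra.

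Finally I would verify the converse and sort out condition (3) of the theorem. Assuming $\alpha=\lambda\otimes e_M$ with $\lambda$ $D$-parallel, a direct check shows $\nabla^{M,D}(\alpha)=0$ (since $\nabla^M e_M=0$ and $D\lambda=0$), so by Remark \ref{variable-ajut} the conditions $S_1$, $S_2$ of Proposition \ref{cond-flatness} hold automatically and $g$ is flat, giving the Frobenius property; this structure is in fact the one obtained by iterated adjunction of variables in the sense of Proposition \ref{iterations}. It remains to identify when $g$ is positive definite: non-degeneracy of $g$ is equivalent to that of $g_V-\alpha^*g_M=g_V-(\sum_k\eta_k)\lambda\otimes\lambda$, and a signature computation on the splitting $TV=TM\oplus V$ shows $g>0$ iff $g_M>0$ (equivalently all $\eta_k>0$, since $g_M$ is positive definite exactly when each diagonal entry is) and $g_V-(\sum_k\eta_k)\lambda\otimes\lambda>0$, which is condition (3). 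The main obstacle I anticipate is the bookkeeping in the forward direction: carefully extracting from conditions (1)--(3) of Proposition \ref{cond-flatness}, in canonical coordinates, the statement that $\alpha(s)$ is always proportional to $e_M$, and making rigorous the step where non-vanishing $\gamma_{ij}$ is used to kill the off-diagonal freedom in $\alpha$.
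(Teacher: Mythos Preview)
Your proposal is correct and follows essentially the same route as the paper: both exploit only the third condition of Proposition~\ref{cond-flatness}, specialize to $s=\tilde{s}$ and $X=Y$ to obtain a sum of squares with positive weights, and then use non-vanishing $\eta_{ij}$ (equivalently $\gamma_{ij}\neq 0$) to force all coefficients $a_k(s)$ equal. One small point: you should make the specialization $X=Y$ explicit --- with only $s=\tilde{s}$ the terms in (\ref{reparat}) are products $(d\alpha(s)^\flat)(Y,X_j)\cdot(d\alpha(s)^\flat)(X,X_j)$, not yet squares. Your presentation differs cosmetically from the paper's in that you first extract the coordinate-free consequence $d\alpha(s)^\flat=0$ and then compute $(a_i-a_j)\eta_{ij}=0$ in canonical coordinates, whereas the paper substitutes canonical coordinates directly into (\ref{reparat}) and reads off $\sum_j \eta_{pj}^2(a_j-a_p)^2/\eta_j=0$; these are the same computation. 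Also, your parenthetical remark that the resulting structure is ``the one obtained by iterated adjunction of variables'' is not quite right: it is the structure of Theorem~\ref{trivial}, which the paper notes is strictly more general than iterated adjunction (since $\circ_V$ need not be semisimple).
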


\begin{proof}
Recall, from Propositions \ref{multiplication} and
\ref{admissible-cond} and Remark \ref{rem-ss}, that $(V, \circ
,e_{V},g)$ is an $F$-manifold with admissible metric if and only
if $D$ is flat, the Frobenius structure $(\circ_{V}, e_{V},
g_{V})$ along the fibers is $D$-parallel and the morphism $\alpha$
maps any (local) $D$-parallel section $s$ to a vector field which
has constant coefficients in a canonical coordinate system of $(M,
\circ_{M}, e_{M})$. Moreover, remark that $g$ is positive definite
if and only if $g_{M}$ is positive definite (i.e. $\eta_{k}>0$ for
any $1\leq k\leq n$) and $g_{V}-\alpha^{*}g_{M}$ (which is equal
to
$g_{V}-\left(\sum_{k=1}^{n}\eta_{k}\right)\lambda\otimes\lambda$
when $\alpha$ given by (\ref{key-eqn})), is also positive
definite. Therefore, it remains to check that the flatness of $g$
determines the morphism $\alpha$ as in (\ref{key-eqn}). For this,
we use the third condition of Proposition \ref{cond-flatness} with
$\epsilon_{i}=\tilde{\epsilon}_{j}=1$. For any $D$-parallel local
sections $s,\tilde{s}\in \Gamma (V)$,
\begin{equation}\label{alpha-completat}
\alpha (s)= \sum_{k=1}^{n} a_{k}\frac{\partial}{\partial
u^{k}},\quad \alpha(\tilde{s}) = \sum_{k=1}^{n} \tilde{a}_{k}\frac{\partial}{\partial
u^{k}}
\end{equation}
for some constants $a_{k}, \tilde{a}_{k}$. In relation
(\ref{reparat}), let $X:= \frac{\partial }{\partial u^{p}}$ and
$Y:= \frac{\partial}{\partial u^{q}}$. It can be checked that with
these arguments, relation (\ref{reparat}) becomes
\begin{align}
\nonumber&\sum_{j=1}^{n} \frac{\eta_{pj}\eta_{qj}}{\eta_{j}} (a_{j}- a_{q})(\tilde{a}_{j}-\tilde{a}_{p}) +\\
&\label{concl}
\sum_{i=1}^{r}\left( \sum_{s=1}^{n} (a_{s}- a_{q}) \eta_{sq}
du^{s}(\alpha (v_{i}))\right) \left( \sum_{t=1}^{n} (\tilde{a}_{t}- \tilde{a}_{p}) \eta_{tp}
du^{t}(\alpha (v_{i}))\right)
=0 .
\end{align}
In (\ref{concl})   let $p=q$ and $s=\tilde{s}.$ We obtain:
$$
\sum_{j=1}^{n} \frac{\eta_{pj}^{2}}{\eta_{j}} (a_{j}-a_{p})^{2} + \sum_{i=1}^{r} \left(
\sum_{s=1}^{n} (a_{s}-a_{p}) \eta_{ps} du^{s}(\alpha (v_{i}))\right)^{2}=0.
$$
Since $\eta_{pj}$ are non-vanishing and $\eta_{j}>0$ ($g_{M}$ being positive definite), we deduce that
$a_{j}=a_{p}$ for any $1\leq p, j\leq n$, i.e.
$$
\alpha (s) = \lambda (s)e_{M},
$$
where $\lambda (s)$ is a constant.
It follows that  $\lambda \in \Gamma (V^{*})$ is $D$-parallel.
From the algebraic properties (\ref{alpha-alg}) of $\alpha$, we obtain (\ref{lambda}). Conversely,  if $\alpha$ is of the form
(\ref{key-eqn}), with $\lambda\in \Gamma (V^{*})$ $D$-parallel and satisfying (\ref{lambda}),
then $\nabla^{M,D}(\alpha )=0$,
all three conditions from Proposition \ref{cond-flatness} hold and the metric $g$ is flat.
Our claim follows.

\end{proof}

\subsection{A class of Frobenius manifolds}\label{star}

In this section we study a class of Frobenius structures on the
product $M\times\mathbb{K}^{r}$, where $M$ is Frobenius. It is
given by the following theorem.

\begin{thm}\label{trivial}
Let $(M,\circ_{M}, e_{M}, g_{M}, E)$ be a Frobenius manifold with
Euler field such that $L_{E}(g_{M}) =2g_{M}$.
On the vector space $\mathbb{K}^{r}$ consider
any bilinear, commutative, associative multiplication $\circ_{V}$, with unit
$e_{V}$, and $\circ_{V}$-invariant metric $g_{V}$. Let $\lambda
\in (\mathbb{K}^{r})^{*}$ such that
$$
\lambda (v\circ_{V}w) = \lambda (v) \lambda (w), \quad\forall
v,w\in\mathbb{K}^{r},\quad \lambda (e_{V})=1.
$$
Define a multiplication $\circ$ on
$T(M\times\mathbb{K}^{r})$ by
$$
X\circ Y: = X\circ_{M}Y,\quad v\circ w:=v\circ_{V}w \quad X\circ v
:= \lambda (v) X,
$$
where $X,Y\in T_{p}M$, $ v,w\in T_{\vec{\tau}}\mathbb{K}^{r}=
\mathbb{K}^{r}$, and a metric $g$ on $M\times\mathbb{K}^{r}$ by
\begin{equation}\label{g-g}
g(X, Y) = g_{M}(X, Y),\quad g(v,w) = g_{V}(v,w),\quad g(X, v) = \lambda (v) g_{M}(X, e_{M}).
\end{equation}
If the bilinear form
\begin{equation}\label{g-m-v}
g_{M,V}:=g_{V}-g_{M}(e_{M},e_{M}) \lambda\otimes\lambda
\end{equation}
on $\mathbb{C}^{r}$ is non-degenerate, then
$(M\times\mathbb{K}^{r}, \circ , e_{V}, g)$ is a Frobenius
manifold, with Euler field $E + R$ (where
$R_{(p,\vec{\tau})}=\sum_{k=1}^{r}\tau_{k}\frac{\partial}{\partial \tau_{k}}$ is the radial field), and
$L_{E+R}(g) =2g.$
\end{thm}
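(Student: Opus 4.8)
The plan is to separate the statement into two independent claims: first that $(M\times\mathbb{K}^{r},\circ ,e_{V},g)$ is a Frobenius manifold, and second that $E+R$ is an Euler field with $L_{E+R}(g)=2g$.

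For the Frobenius part I would observe that this construction is exactly the general one of Sections \ref{four1}--\ref{frobenius}, applied to the trivial bundle $\pi :V=M\times\mathbb{K}^{r}\to M$ with the trivial connection $D$ and the morphism $\alpha =\lambda\otimes e_{M}$: indeed $X\circ v=\lambda (v)X=\lambda (v)e_{M}\circ_{M}X=\alpha (v)\circ_{M}X$ identifies the multiplication with (\ref{circ}), and the metric (\ref{g-g}) is (\ref{metric}) since $g(X,v)=\lambda (v)g_{M}(X,e_{M})=g_{M}(\alpha (v),X)$. The hypotheses of Proposition \ref{multiplication} then hold: $D$ is flat; $\circ_{V}$ and $g_{V}$ are constant, hence $D$-parallel; for a $D$-parallel (i.e. constant) section $s$ we have $\alpha (s)=\lambda (s)e_{M}$ with $\lambda (s)$ a constant, so $L_{\alpha (s)}(\circ_{M})=\lambda (s)L_{e_{M}}(\circ_{M})=0$ (the identity $L_{e_{M}}(\circ_{M})=0$ being the $F$-manifold axiom evaluated at $X=Y=e_{M}$); and $[\alpha (s_{1}),\alpha (s_{2})]=\lambda (s_{1})\lambda (s_{2})[e_{M},e_{M}]=0$. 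Thus $(V,\circ ,e_{V})$ is an $F$-manifold, and by Proposition \ref{admissible-cond} the metric $g$ is admissible, because $D(g_{V})=0$ and $\epsilon_{M}=g_{M}(e_{M},\cdot )$ is parallel, hence closed, since $e_{M}$ is $\nabla^{M}$-parallel.

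To get flatness I would note that $\nabla^{M,D}(\alpha )=(\nabla^{M}e_{M})\otimes\lambda +e_{M}\otimes (D\lambda )=0$, since $e_{M}$ is $\nabla^{M}$-parallel and $\lambda$ is $D$-parallel; by Remark \ref{variable-ajut} (equivalently, by the converse half of the proof of Theorem \ref{corectat}, which uses neither semisimplicity nor positive-definiteness) all three conditions of Proposition \ref{cond-flatness} are satisfied and $g$ is flat. Finally, non-degeneracy of $g$ is equivalent to non-degeneracy of $g_{V}-\alpha^{*}g_{M}$, and here $\alpha^{*}g_{M}=g_{M}(e_{M},e_{M})\,\lambda\otimes\lambda$, so $g_{V}-\alpha^{*}g_{M}=g_{M,V}$ from (\ref{g-m-v}), which is non-degenerate by assumption. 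This completes the first part.

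For the Euler field I would verify $L_{E+R}(\circ )=\circ$ and $L_{E+R}(g)=2g$ directly on the frame of horizontal lifts $\tilde{X}$ and constant vertical fields $v$, using the bracket relations $[R,\tilde{X}]=0$, $[E+R,\tilde{X}]={[E,X]}^{\widetilde{}}$, $[E+R,v]=-v$ (for constant $v$), together with $L_{E}(\circ_{M})=\circ_{M}$, $L_{E}(g_{M})=2g_{M}$ and $[E,e_{M}]=-e_{M}$ (this last again following from $L_{E}(\circ_{M})=\circ_{M}$ at $e_{M},e_{M}$). The three cases for $L_{E+R}(\circ )$ reduce respectively to $L_{E}(\circ_{M})=\circ_{M}$, to the cancellation $-(v\circ_{V}w)+v\circ_{V}w+v\circ_{V}w=v\circ_{V}w$, and to a telescoping in which the constant $\lambda (v)$ factors out; for $L_{E+R}(g)$ the horizontal-horizontal case is $L_{E}(g_{M})=2g_{M}$, the vertical-vertical case uses $[E+R,v]=-v$ to produce the factor $2$, and the mixed case is the crux. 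The main obstacle is precisely this mixed horizontal-vertical case of $L_{E+R}(g)=2g$: there $E$ and $R$ contribute separately, and one must use $[E,e_{M}]=-e_{M}$ to reduce $E(g_{M}(X,e_{M}))-g_{M}([E,X],e_{M})$ to $g_{M}(X,e_{M})$ (rather than $2g_{M}(X,e_{M})$); the remaining $\lambda (v)g_{M}(X,e_{M})$ supplied by $[R,v]=-v$ then furnishes the missing factor, giving $2\lambda (v)g_{M}(X,e_{M})=2g(\tilde{X},v)$. Every other case is routine once the bracket relations above are recorded.
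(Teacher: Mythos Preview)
Your proof is correct and follows essentially the same route as the paper's own argument: identify the construction with the general framework of Propositions \ref{multiplication}, \ref{admissible-cond}, \ref{cond-flatness} applied to the trivial bundle with trivial connection and $\alpha=\lambda\otimes e_{M}$, then verify the Euler field claim by direct computation. The paper merely asserts that the Frobenius claim is ``straightforward'' from those propositions and that the Euler statement ``can be checked directly''; you have faithfully expanded both steps, and your detailed treatment of the mixed horizontal--vertical case of $L_{E+R}(g)=2g$ (using $[E,e_{M}]=-e_{M}$) is exactly the computation the paper leaves to the reader.
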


\begin{proof}
The claim that $(M\times\mathbb{K}^{r}, \circ , e_{V}, g)$ is Frobenius
is straightforward, from
Propositions \ref{multiplication}, \ref{admissible-cond} and
\ref{cond-flatness},  with
$$
\pi :V=M\times\mathbb{K}^{r}\rightarrow M,\quad \pi (p, \tau_{1},
\cdots , \tau_{r}) = p
$$
the trivial bundle, $D$ the trivial standard connection and
$\alpha : V\rightarrow TM$  given by $\alpha (v) =\lambda
(v)e_{M}$, for any $v\in V_{\pi (v)}\cong\mathbb{K}^{r}.$ The
statement involving the Euler fields can be checked directly.
\end{proof}

\begin{rem}{\rm We remark that the Frobenius structure from the
above theorem is more general than those provided by iterations of
adding a variable to a Frobenius manifold (see Proposition
\ref{iterations}). The reason is that the multiplication
$\circ_{V}$ on $\mathbb{K}^{r}$ from the above theorem can be
chosen arbitrarily (as long as it is commutative, associative,
with unit), while in Proposition \ref{iterations} the
multiplication $\circ^{r}$ restricted to vertical vectors
$\frac{\partial}{\partial \tau_{i}}$ has a very precise form (in
particular, it is semisimple, with canonical basis $\{ w_{1}:=
\frac{\partial}{\partial \tau_{1}},
w_{2}:=\frac{\partial}{\partial \tau_{2}}
-\frac{\partial}{\partial\tau_{1}},\cdots ,
w_{r}:=\frac{\partial}{\partial \tau_{r}}
-\frac{\partial}{\partial\tau_{r-1}}\}$).}
\end{rem}

\begin{rem}\label{potential}{\rm
It is easy to relate the potential $F_{M}$ of the Frobenius
structure on $M$ to the potential $F$ of the Frobenius structure
on $M\times \mathbb{K}^{r}$, from Theorem \ref{trivial}. If
$(t^{i})$ are flat coordinates for the metric $g_{M}$, then
$(t^{i}, \tau^{j})$ (with $(\tau^{j})$ the standard coordinate
system of $\mathbb{K}^{r}$) are flat coordinates for $g$ and one
may check that
\begin{align*}
F( t^{i}, {\tau}^{j}) &= F_{M}({t}^{i}) + \frac{1}{2} \left(
\sum_{s=1}^{r}\lambda_{s}\tau_{s}\right) \left(
\sum_{i,j=1}^{n}g_{ij}t^{i}t^{j}\right) + \frac{1}{2}
\left(\sum_{s,k=1}^{r}\lambda_{sk}\tau^{s}\tau^{k}\right) \left(
\sum_{j=1}^{n} \epsilon_{j} t^{j}\right)\\
& + \frac{1}{6} \sum_{s,k,j} g_{skj} \tau^{s}\tau^{k}\tau^{j}
\end{align*}
where
\begin{align*}
&g_{ij}:= g\left(\frac{\partial}{\partial t^{i}},
\frac{\partial}{\partial t^{j}}\right),\quad \lambda_{s}:=
\lambda\left(\frac{\partial}{\partial\tau^{s}}\right),\quad
\lambda_{sk}:=\lambda\left(\frac{\partial}{\partial\tau^{s}}\circ_{V}
\frac{\partial}{\partial\tau^{k}}\right),\\
& g_{skj}=g\left( \frac{\partial}{\partial
\tau^{s}}\circ_{V}\frac{\partial}{\partial \tau^{k}},
\frac{\partial}{\partial \tau^{j}}\right) ,\quad \epsilon_{j} =
g_{M}\left( e_{M}, \frac{\partial}{\partial t^{j}}\right)
\end{align*}
are all constants.}
\end{rem}

Now we show how the Frobenius structure on $M\times\mathbb{K}^{r}$
from Theorem \ref{trivial} can be obtained
from a Saito bundle.
We denote by $\phi^{M}$ and $\phi^{V}$ the Higgs fields determined by
$\circ_{M}$ and $\circ_{V}$, i.e.
$$
\phi^{M}_{X}(Y) = - X\circ_{M}Y,\quad \phi^{V}_{v}(w) = -
v\circ_{V}w,\quad \forall X,Y\in TM,\quad \forall v,w\in
\mathbb{K}^{r},
$$
considered as  $1$-forms on $M$ with values in
$\mathrm{End}(TM\oplus V)$ (acting trivially on $V$ and $TM$,
respectively). Using $\phi^{M}$ and $\phi^{V}$ we define an
$\mathrm{End}(\pi^{*}(TM\oplus V))$-valued $1$-form $\phi^{M,V}$
on $M\times\mathbb{K}^{r}$, which, at $(p,\vec{\tau})\in M\times
\mathbb{K}^{r}$, is given by
\begin{align*}
\phi^{M,V}_{X}(\pi^{*}Y)&= \pi^{*}\phi^{M}_{X}(Y),\quad \phi^{M,V}_{X}\vert_{\pi^{*}V} =0\\
\phi^{M,V}_{v}(\pi^{*}w)&= \pi^{*}\phi^{V}_{v}(w),\quad
\phi^{M,V}_{v}\vert_{\pi^{*}TM} = -\lambda
(v)\mathrm{Id}_{\pi^{*}TM},
\end{align*}
for any $X,Y\in T_{p}M$ and $v,w\in T_{\vec{\tau}}\mathbb{K}^{r}=
\mathbb{K}^{r}=V_{p}.$

\begin{prop}\label{trivial-prop} On
the bundle $\pi^{*}(TM\oplus V)\rightarrow M\times \mathbb{K}^{r}$
consider the following data:\\

1) the Higgs field $\phi^{M,V}$;\\

2) the connection $\pi^{*}(\nabla^{M}\oplus D)$, where
$\nabla^{M}$ is the Levi-Civita connection of $g_{M}$ and $D$ is
the standard trivial connection
on the bundle $V$;\\

3)  the metric $\pi^{*}(g_{M} \oplus g_{M,V})$, where
$g_{M,V}$, defined by (\ref{g-m-v}), is considered as a (constant) metric on the bundle $V$;\\

4) the endomorphism $R^{\prime}_{0}\in\mathrm{End}\left(
\pi^{*}(TM\oplus V)\right)$, which, at a point $(p,\vec{\tau})\in
M\times\mathbb{K}^{r}$, is defined by
\begin{align*}
(R^{\prime}_{0})_{(p,\vec{\tau})} (\pi^{*}X) &:= \pi^{*}(X\circ_{M} E^{M}) + \lambda (\vec{\tau}) \pi^{*}X,\\
(R^{\prime}_{0})_{(p,\vec{\tau})} (\pi^{*}v)&:=
\pi^{*}(v\circ_{V}\vec{\tau}),
\end{align*}
for any $X\in T_{p}M$ and  $v\in
V_{p}=\mathbb{K}^{r}.$\\

5) the endomorphism $R^{\prime}_{\infty}\in\mathrm{End}\left(
\pi^{*}(TM\oplus V)\right)$ defined by
$$
R^{\prime}_{\infty}\vert_{\pi^{*}TM} := \pi^{*}(\nabla^{M}E
-\mathrm{Id}_{TM}),\quad R^{\prime}_{\infty}\vert_{\pi^{*}V}=0.
$$
Then $\pi^{*}(TM\oplus V)$ with this data is a Saito bundle of weight zero.
The Frobenius structure from Theorem \ref{trivial} can
be obtained from this Saito bundle, by
choosing $\pi^{*}(e_{M}+e_{V})$ as a primitive homogeneus section.
\end{prop}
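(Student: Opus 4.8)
The plan is to verify directly that the five-tuple $(\phi^{M,V}, \pi^*(\nabla^M\oplus D), \pi^*(g_M\oplus g_{M,V}), R'_0, R'_\infty)$ satisfies all the defining relations of a Saito structure of weight zero --- the conditions in (\ref{saito0}) together with those of Remark \ref{saito-structure} with $w=0$ --- and then to exhibit $\omega:=\pi^*(e_M+e_V)$ as a primitive homogeneous section whose associated Frobenius data coincides with that of Theorem \ref{trivial}. Throughout I would work in the flat frame $\{\pi^*(\partial/\partial t^i),\pi^* e_a\}$, where $(t^i)$ are flat coordinates on $M$ and $\{e_a\}$ a constant basis of $\mathbb{K}^r$, and split every identity according to the decomposition $\pi^*(TM)\oplus\pi^* V$ and according to horizontal ($X\in TM$) versus vertical ($v\in\mathbb{K}^r$) directions of the argument.

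The more routine axioms I would dispose of first. Flatness $R^\nabla=0$ is immediate since $\nabla^M$ and $D$ are flat and pullback preserves curvature, and $\nabla g=0$ holds because $g_M$ is $\nabla^M$-parallel while $g_{M,V}$ is a constant (hence $D$-parallel) fibre metric. The identity $\phi^*=\phi$ reduces block-by-block to the $\circ_M$-invariance of $g_M$, the $\circ_V$-invariance of $g_{M,V}$, and the vanishing of the mixed blocks of $g$; here I would first record that $\lambda(v\circ_V w)=\lambda(v)\lambda(w)$ makes $\lambda\otimes\lambda$, and therefore $g_{M,V}=g_V-g_M(e_M,e_M)\lambda\otimes\lambda$, invariant under $\circ_V$. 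Commutativity $\phi\wedge\phi=0$ follows from the commutativity and associativity of $\circ_M$ and $\circ_V$ and the fact that $\phi^{M,V}_v$ acts on $\pi^*(TM)$ as the scalar $-\lambda(v)$. The Higgs flatness $d^\nabla\phi=0$ becomes, in the flat frame, the requirement that the derivatives of the coefficients of $\phi^{M,V}_{\partial_\nu}$ be symmetric in the frame indices; the $TM\!\to\!TM$ block gives exactly the potentiality of $F_M$, while the remaining blocks have coefficients that are either constant along the fibre or constant along $M$, so their mixed derivatives vanish. For the $R$-axioms I would check $[R'_0,\phi^{M,V}]=0$, $(R'_0)^*=R'_0$, $\nabla R'_\infty=0$ and the skew-adjointness $(R'_\infty)^*+R'_\infty=0$ by the same block calculus; the key inputs are $\nabla^M(\nabla^M E)=0$ (since $E$ is affine in flat coordinates) and the identity $g_M(\nabla^M_X E,Y)+g_M(X,\nabla^M_Y E)=2g_M(X,Y)$, the infinitesimal form of $L_E(g_M)=2g_M$, which yields both $(R'_0)^*=R'_0$ and skew-adjointness of $R'_\infty$ at weight zero.

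The main obstacle is the first structure equation $\nabla R'_0+\phi^{M,V}=[\phi^{M,V},R'_\infty]$. My plan is to separate the horizontal and vertical derivatives. On horizontal directions with arguments in $\pi^*(TM)$ the identity reduces, after pulling out $\pi^*$, to the corresponding Saito relation on the base for the weight-zero structure $R_0^M=-\phi^M_E$, $R_\infty^M=\nabla^M E-\mathrm{Id}$, which holds by Remark \ref{add-var-saito} (with $d=2$, $w=0$); this is where the choice $R'_\infty|_{\pi^*TM}=\pi^*(\nabla^M E-\mathrm{Id})$ is forced. The genuinely new content sits in the vertical derivatives: differentiating $R'_0$ along a vertical $v$ hits $\lambda(\vec\tau)$, producing the scalar $\lambda(v)$, and hits $v\mapsto w\circ_V\vec\tau$, producing $w\circ_V v$. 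I expect these to cancel exactly against the $-\lambda(v)\mathrm{Id}$ and $-v\circ_V(\cdot)$ pieces of $\phi^{M,V}_v$, making both sides vanish; since $R'_\infty$ kills $\pi^*V$, the bracket contributes nothing extra in these directions. Confirming these cancellations in all four combinations of a horizontal or vertical direction with an argument in $\pi^*(TM)$ or $\pi^*V$ is the crux of the verification.

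Finally I would identify the primitive section and recover Theorem \ref{trivial}. The section $\omega=\pi^*(e_M+e_V)$ is $\nabla$-parallel because $\nabla^M e_M=0$ and $D e_V=0$; it is homogeneous with $q=0$ since $R'_\infty\omega=\pi^*(\nabla^M_{e_M}E-e_M)=0$, using $\nabla^M_{e_M}E=e_M$ (equivalently $L_E(e_M)=-e_M$, a consequence of $L_E(\circ_M)=\circ_M$). A short computation gives $\psi^\omega(X)=\pi^*X$ and $\psi^\omega(v)=\lambda(v)\pi^*e_M+\pi^*v$, a unipotent upper-triangular map, hence an isomorphism, and one also reads off $\phi^{M,V}_{e_V}=-\mathrm{Id}$, so $e_V$ is the unit. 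Transporting along $\psi^\omega$, I would check that $(\psi^\omega)^{-1}(\phi^{M,V}_{Z_1}\phi^{M,V}_{Z_2}\omega)$ reproduces the three products $X\circ_M Y$, $\lambda(v)X$, $v\circ_V w$, that $g^\omega=(\psi^\omega)^*g$ recovers (\ref{g-g}) --- the term $\lambda(v)\lambda(w)g_M(e_M,e_M)$ in $g^\omega(v,w)$ being precisely cancelled by the correction in $g_{M,V}$, so that $g^\omega(v,w)=g_V(v,w)$ --- and finally that $E^\omega=(\psi^\omega)^{-1}(R'_0\omega)=E+R$, since $R'_0\omega=\pi^*E+\psi^\omega(R)$ with $R=\sum_k\tau_k\,\partial/\partial\tau_k$ the radial field.
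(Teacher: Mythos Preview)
Your proposal is correct and follows the same approach as the paper: a direct block-by-block verification of the Saito axioms, followed by the computation of $\psi^{\omega}$ (your formulas for $\psi^{\omega}(X)$ and $\psi^{\omega}(v)$ coincide with (\ref{p-s-i})) and the identification of the resulting Frobenius data with that of Theorem~\ref{trivial}. The paper declares the proof ``straightforward'' and records only the parallelism and homogeneity of $\omega$ together with formula (\ref{p-s-i}); you have simply supplied the omitted details, including the cancellations in the vertical part of $\nabla R'_{0}+\phi^{M,V}=[\phi^{M,V},R'_{\infty}]$ and the $g_{M}(e_{M},e_{M})\lambda\otimes\lambda$ cancellation in $g^{\omega}(v,w)$.
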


\begin{proof} The proof is straightforward and we omit the details.
We only remark that
$\pi^{*}(e_{M}+e_{V})$ is $\pi^{*}(\nabla^{M}\oplus D)$-parallel,  belongs to the kernel of $R^{\prime}_{\infty}$ (hence is homogeneous),
and
the map
\begin{equation}\label{psi}
\psi : T(M\times\mathbb{K}^{r}) \rightarrow \pi^{*}(TM\oplus V),
\quad Z\rightarrow \psi (Z):= -\phi_{Z}\left( \pi^{*}(e_{M}+ e_{V})\right)
\end{equation}
is given by
\begin{equation}\label{p-s-i}
\psi_{(p,\vec{\tau})} (X) = \pi^{*} (X),\quad \psi_{(p,\vec{\tau})} (v)= \pi^{*}(\lambda (v) e_{M}+
v),
\end{equation}
for any $X\in T_{p}M$ and $v\in
T_{\vec{\tau}}\mathbb{K}^{r}=\mathbb{K}^{r}=V_{p}$, and is an isomorphism.

\end{proof}

\begin{rem}{\rm
In the setting of Proposition \ref{trivial-prop}, assume
that $V=L$ is trivial of rank one.  Changing the trivialization of $V$ if necessary, we may assume that $e_{V}$
(viewed as a vector field on $V$), is
$\frac{\partial}{\partial \tau}$. Then $\lambda = d\tau$ and $\alpha = d\tau\otimes e_{M}.$
Let $g_{V}$ be defined by $g_{V}(e_{V},e_{V})= 1+ g_{M}(e_{M}, e_{M})$.
The resulting Saito structure from Proposition \ref{trivial-prop} coincides with the Saito
structure from Remark \ref{add-var-saito}, the latter with $d=2$, $w=0$, $v=e_{V}$ and $\tilde{g}:= g_{V}- g_{M}(e_{M},e_{M})\lambda\otimes\lambda$.
Therefore, the above proposition is a generalization of adding a variable to a Frobenius manifold, using abstract Saito bundles.}
\end{rem}

\section{$tt^{*}$-geometry on $V$}\label{sect-tt}

The $tt^{*}$-equations may be defined on any holomorphic vector
bundle (with a pseudo-Hermitian metric and holomorphic Higgs
field), but in our considerations  we shall only consider them on
the holomorphic tangent bundle of a complex manifold.

\begin{notation}{\rm
In this section we use slightly different notations from those
employed until now. The reason is that the $tt^{*}$-equations
involve vector fields of type $(1,0)$ and $(0,1)$ as well. In
order to avoid confusion, the holomorphic tangent bundle of a
complex manifold $M$ will be denoted by $T^{1,0}M$ (rather than
$TM$, how it was denoted before). The sheaf of smooth
(respectively holomorphic) vector fields will be denoted by
${\mathcal T}^{1,0}_{M}$ (respectively, ${\mathcal T}_{M}$). For
an holomorphic vector bundle $V\rightarrow M$, $\Gamma (V)$ will
denote, as before, the sheaf of holomorphic sections of $V$.}

\end{notation}

We begin by recalling the $tt^{*}$-equations.

\subsection{The $tt^{*}$-equations}\label{tt-star}

Given a complex manifold $M$ together with an (associative,
commutative, with unit) multiplication $\circ$ on $T^{1,0}M$ and a
pseudo-Hermitian metric $h$, the $tt^{*}$-equations are the
followings:
$$ (\partial^{D}\phi )_{Z_{1}, Z_{2}}=0,\quad R^{D}_{Z_{1},
\bar{Z}_{2}} + [\phi_{Z_{1}}, \phi^{\dagger}_{\bar{Z}_{2}}]=0,
\quad Z_{1},Z_{2}\in {\mathcal T}^{1,0}M
$$
where
$$
(\partial^{D}\phi )_{Z_{1},Z_{2}} := D_{Z_{1}}(\phi_{Z_{2}}) -
D_{Z_{2}}(\phi_{Z_{1}}) - \phi_{[Z_{1},Z_{2}]}.
$$
Above $\phi \in \Omega^{1,0}(M, \mathrm{End}T^{1,0}M)$ is the
Higgs field determined by the multiplication, i.e. $\phi_{X}(Y):=
- X\circ Y$,  $\phi^{\dagger}\in \Omega^{0,1}(M,
\mathrm{End}T^{1,0}M)$ is the $h$-adjoint of $\phi$ and $D$ is the
Chern connection of $h$. In our considerations, the
pseudo-Hermitian metric will be given by $h(Z_{1},Z_{2}) =
g(Z_{1}, kZ_{2})$, where $g$ is a (multiplication)  invariant
metric on $T^{1,0}M$ and  $k: T^{1,0}M\rightarrow T^{1,0}M$ is a
real structure, compatible with $g$, i.e. $g(kZ_{1},kZ_{2}) =
\overline{g(Z_{1},Z_{2})}$, for any $Z_{1},Z_{2}\in T^{1,0}M$.
(Recall that a real structure on a vector bundle is a
fiber-preserving complex anti-linear involution). In this case,
$\phi^{\dagger}_{\bar{Z}}= k \phi_{Z}k$, for any $Z\in T^{1,0}M.$
The compatibility condition insures that $h$ defined as above in
terms of $g$ and $k$ is indeed a pseudo-Hermitian metric.

\begin{ex}\label{capi}{\rm On any semisimple complex Frobenius manifold
$(M, \circ , e, g)$ one can define a diagonal real structure, such
that the $tt^{*}$-equations hold (see Theorem 2.4 of \cite{lin}).
More precisely, let $\eta$ be the metric potential in canonical
coordinates $(u^{1}, \cdots , u^{n})$, i.e. $g\left(
\frac{\partial}{\partial u^{i}}, \frac{\partial}{\partial
u^{j}}\right) = \delta_{ij} \eta_{i}$, for any $i,j$ (with
$\eta_{i}= \frac{\partial\eta}{\partial u^{i}}$). Define a real
structure on $T^{1,0}M$, compatible with $g$,  by
$k\left(\frac{\partial}{\partial u^{i}} \right) = \frac{
|\eta_{i}|}{\eta_{i}}\frac{\partial}{\partial u^{i}}$, for any
$i$.  The Chern connection $D$ of the associated pseudo-Hermitian
metric $h= g(\cdot , k\cdot )$ is given by
\begin{equation}\label{chern-diag}
D_{Z}\left(\frac{\partial}{\partial u^{i}}\right) =
Z\left(\mathrm{log}| \eta_{i}| \right) \frac{\partial}{\partial
u^{i}},\quad Z\in {\mathcal T}^{1,0}_{M},\quad 1\leq i\leq n
\end{equation}
and one may show that the $tt^{*}$-equations hold. Remark that in
this example the second $tt^{*}$-equation decouples: $D$ is flat
and the Higgs field commutes with its $h$-adjoint. Other examples
of $tt^{*}$-structures may be constructed on the base space of the
universal unfolding of various Laurent polynomials
\cite{sabbah-art}. There is also a real version of the
$tt^{*}$-equations, which relate harmonic Higgs bundles with
special geometries, see e.g. \cite{cortes,sch}.}
\end{ex}

\subsection{The main result}

Let $\pi : V \rightarrow M$ be an holomorphic vector bundle whose
base has an holomorphic Frobenius structure $(\circ_{M},e_{M},
g_{M})$, the typical fiber has a Frobenius algebra
$(\circ_{V},e_{V}, g_{V})$, and which comes equipped with an
holomorphic connection $D$ and an holomorphic morphism $\alpha :
V\rightarrow T^{1,0}M$, preserving the multiplications and the
unit fields. Let $\circ$ and $g$ be the induced multiplication and
metric on the manifold $V$, defined as usual by (\ref{circ}) and
(\ref{metric}). We denote by $\phi$, $\phi^{M}$ and $\phi^{V}$ the
Higgs fields associated to $\circ$, $\circ_{M}$ and $\circ_{V}$
respectively.

\begin{assumption}{\rm In the following considerations, we  assume that $(V,\circ ,e_{V})$ is an $F$-manifold, i.e. all
conditions from Proposition \ref{multiplication} are satisfied -
in particular, $D$ is flat.  (This assumption is not restrictive:
our goal is to construct solutions to the $tt^{*}$-equations on
$(V, \circ , e_{V},g)$; but, as shown in Lemma 4.3 of
\cite{referee2}, the first $tt^{*}$-equation implies the
$F$-manifold condition).}
\end{assumption}

Let $k_{M}$ be a real structure on $T^{1,0}M$, compatible with
$g_{M}$ and $h_{M}:= g_{M}(\cdot , k_{M}\cdot )$ the associated
pseudo-Hermitian metric. Similarly, let $k_{V}$ be a real
structure on the bundle $V$, compatible with $g_{V}$, and
$h_{V}:= g_{V}(\cdot , \cdot )$. Using $k_{M}$ and $k_{V}$ we
construct a real structure $k$ on $T^{1,0}V$, which on the
$D$-horizontal subbundle of $T^{1,0}V$ coincides with $k_{M}$ and
on the vertical subbundle coincides with $k_{V}.$ We assume that
\begin{equation}\label{compatibilitate}
\alpha \circ k_{V} = k_{M}\circ\alpha
\end{equation}
which means that $k$ is compatible with $g$ (easy check). Let $h:=
g(\cdot , k\cdot )$, given by
\begin{align*}
h(\tilde{X}, \tilde{Y}) = h_{M}(X,Y),\quad h(\tilde{X}, v)= h_{M}
(X, \alpha (v))\\
h(v_{1},v_{2}) = h_{V}(v_{1},v_{2}), \quad h(v,\tilde{X}) =
h_{M}(\alpha (v), X),
\end{align*}
for any $X,Y\in T_{p}^{1,0}M$ and $v,v_{1},v_{2}\in V_{p}$ ($p\in
M$). Our main result is the following.

\begin{thm}\label{main-thm-tt} In the above setting, the $tt^{*}$-equations
hold on $(V, \circ ,h)$ if and only if they hold on $(M,
\circ_{M}, h_{M})$ and the following additional conditions
are true:\\

i) for any local $D$-parallel section $s\in \Gamma (V)$,
\begin{equation}\label{tt-1}
D^{M}_{X} \left( \alpha (s)\circ_{M} Z\right) = \alpha
(s)\circ_{M} D^{M}_{X} (Z),\quad \forall X, Z\in {\mathcal T}_{M},
\end{equation}
where $D^{M}$ is the Chern connection of $h_{M}$;\\

ii) The $(1,0)$-part of the Chern connection $D^{V}$ of $h_{V}$ is
related to $D$ by
\begin{equation}\label{tt-2}
D^{V}_{X}s = D_{X}s + (D^{V}_{X}e_{V}) \circ_{V} s,\quad \forall
X\in {\mathcal T}_{M},\quad \forall s\in \Gamma (V);
\end{equation}

iii) for any $X\in {\mathcal T}_{M}$ and $s,s_{1}, s_{2}\in \Gamma
(V)$,
\begin{equation}\label{crosete-tt}
[ \phi^{M}_{X}, k_{M}\phi^{M}_{\alpha (s)} k_{M}]=0,\quad [
\phi^{V}_{s_{1}}, k_{V}\phi^{V}_{s_{2}}k_{V}]=0;
\end{equation}

iv) For any $X, Y\in {\mathcal T}_{M}$ and $s,s_{1}\in \Gamma
(V)$,
\begin{align}
\nonumber&h_{V} \left( R^{D^{V}}_{X,\bar{Y}}s, s_{1}\right) -
h_{M} \left( R^{D^{M}}_{X,\bar{Y}}\alpha (s), \alpha
(s_{1})\right)=\\
\label{curvature}&  h_{M}\left( {\mathcal D}_{X}(\alpha )(s) ,
D^{M}_{Y}(\alpha (s_{1})) - \alpha (D^{M,V}_{Y}s_{1})\right) ,
\end{align}
where $D^{M,V}$ is the Chern connection of $h_{V}-\alpha^{*}h_{M}$
and
$$
{\mathcal D}_{X}(\alpha )(s):= D^{M}_{X}(\alpha (s)) - \alpha
(D^{V}_{X}s).
$$

\end{thm}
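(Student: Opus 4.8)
The plan is to compute the two $tt^*$-equations on $(V,\circ,h)$ directly, using the horizontal/vertical decomposition $T^{1,0}V = \pi^*T^{1,0}M \oplus \pi^*V$ induced by $D$, and to test each equation against the three natural types of vector-field arguments (both horizontal, mixed, both vertical). Since the Higgs field is $\phi_{\mathcal W} = -{\mathcal W}\circ(\cdot)$ and $k$ splits as $k_M$ on horizontal and $k_V$ on vertical vectors, the structure equations on $V$ decompose into pieces that are either pullbacks of the corresponding objects on $M$ (giving back the $tt^*$-equations on $(M,\circ_M,h_M)$) or genuinely new terms measuring the interaction of $\alpha$, $D$, and the Chern connections. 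The key preliminary step is to identify the Chern connection $D^V_V$ of $h$ on $V$ explicitly; because $h$ is built from $h_M$, $h_V$ and $\alpha$ exactly as $g$ was built from $g_M,g_V,\alpha$, I expect $D^V_V$ to have the same block form as the Levi-Civita connection $\nabla$ computed in Lemma \ref{levi-civita}, with $\nabla^M$ replaced by $D^M$ and symmetrizations replaced by the appropriate holomorphic (type $(1,0)$) derivatives.

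\textbf{The first $tt^*$-equation.} First I would analyze $(\partial^{D^V}\phi)_{Z_1,Z_2}=0$. Feeding in two horizontal lifts $\tilde X,\tilde Y$ and testing against a horizontal output reproduces $(\partial^{D^M}\phi^M)_{X,Y}=0$, i.e.\ the first $tt^*$-equation on the base; this is where the hypothesis ``the $tt^*$-equations hold on $(M,\circ_M,h_M)$'' enters. The genuinely new content comes from mixed arguments: computing $(\partial^{D^V}\phi)_{\tilde X, s}$ for a $D$-parallel section $s$ and evaluating against horizontal outputs produces, after using $\phi_{s} = -\lambda$-type multiplication by $\alpha(s)$ and flatness of $D$, exactly the covariant-constancy condition
\begin{equation*}
D^M_X\bigl(\alpha(s)\circ_M Z\bigr) = \alpha(s)\circ_M D^M_X(Z),
\end{equation*}
namely condition (\ref{tt-1}). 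The same mixed computation, read against vertical outputs and using $\phi^V$, forces the relation (\ref{tt-2}) between $D^V$ and $D$ through the correction term $(D^V_X e_V)\circ_V s$; the appearance of $e_V$ here should come from the fact that $\phi_{e_V}=-\mathrm{Id}$ so that $D^V_X e_V$ records the failure of $D$ to be the $(1,0)$-part of the fiber Chern connection. Testing with two vertical sections $s_1,s_2$ is automatic, since $\circ_V$ is $D$-parallel and $D$ is flat.

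\textbf{The second $tt^*$-equation.} Next I would treat $R^{D^V}_{Z_1,\bar Z_2} + [\phi_{Z_1},\phi^\dagger_{\bar Z_2}]=0$, using $\phi^\dagger_{\bar Z} = k\phi_Z k$. Pairing against the metric $h$ converts this operator identity into scalar identities indexed by the four slots. The purely horizontal slot returns the second $tt^*$-equation on $M$. The commutator terms $[\phi^M_X, k_M\phi^M_{\alpha(s)}k_M]$ and $[\phi^V_{s_1}, k_V\phi^V_{s_2}k_V]$ arise precisely from the off-diagonal blocks of $[\phi_{Z_1},\phi^\dagger_{\bar Z_2}]$ where a horizontal and a vertical Higgs action meet, and requiring their vanishing is condition (\ref{crosete-tt}) of part (iii). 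The remaining scalar identity, obtained by pairing the curvature-plus-commutator against one vertical and one conjugate-vertical (or the analogous horizontal) argument, collects the curvature difference $h_V(R^{D^V}_{X,\bar Y}s,s_1)-h_M(R^{D^M}_{X,\bar Y}\alpha(s),\alpha(s_1))$ on one side and the bilinear pairing of the ``second fundamental form'' ${\mathcal D}_X(\alpha)(s)=D^M_X(\alpha(s))-\alpha(D^V_X s)$ on the other, yielding the curvature identity (\ref{curvature}) of part (iv).

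\textbf{Main obstacle.} The hard part will be the bookkeeping in the second equation: the curvature $R^{D^V}$ of the Chern connection of $h$ on the total space $V$ mixes horizontal and vertical derivatives of $\alpha$, and one must carefully separate which terms are genuine curvatures ($R^{D^V}$, $R^{D^M}$), which are commutators of Higgs fields, and which assemble into the tensor ${\mathcal D}(\alpha)$, while repeatedly invoking flatness of $D$, $D$-parallelism of $\circ_V$ and $g_V$, and the compatibility $\alpha\circ k_V=k_M\circ\alpha$ to collapse conjugate-linear terms. The algebraic identity $\lambda(s_1\circ_V s_2)=\lambda(s_1)\lambda(s_2)$ is \emph{not} assumed here (we are in the general $\alpha$ setting, not the semisimple conclusion of Theorem \ref{corectat}), so I cannot simplify $\alpha(s)\circ_M(\cdot)$ to a scalar multiple of the identity; keeping the full morphism $\alpha$ throughout is what makes the mixed-block computation delicate. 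I expect that once $D^V_V$ is pinned down by the Koszul/Chern formula in the same block form as Lemma \ref{levi-civita}, each condition (i)--(iv) falls out by matching a specific slot of the two $tt^*$-equations, with no single step requiring more than patient index-free manipulation.
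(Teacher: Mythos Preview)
Your strategy matches the paper's: compute the Chern connection $D^{c}$ of $h$ explicitly in block form (the paper does this via the defining relation $\mathcal{W}_{1}h(\mathcal{W}_{2},\mathcal{W}_{3})=h(D^{c}_{\mathcal{W}_{1}}\mathcal{W}_{2},\mathcal{W}_{3})$ for holomorphic arguments, obtaining formulas simpler than the Levi--Civita case since no symmetrization is needed), then test both $tt^{*}$-equations on all horizontal/vertical combinations, and read off conditions (i)--(iv) slot by slot.

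One correction to your slot assignment: condition (iv) does \emph{not} come from ``one vertical and one conjugate-vertical'' direction in $R^{D^{c}}_{Z_{1},\bar Z_{2}}+[\phi_{Z_{1}},\phi^{\dagger}_{\bar Z_{2}}]$. It comes from the \emph{both-horizontal} slot $Z_{1}=\tilde X$, $Z_{2}=\tilde Y$, but with the resulting operator evaluated on a vertical $s$ and paired (via $h$) with a vertical $s_{1}$. In that slot the bracket term reduces, by the base $tt^{*}$-equation, to $-h_{M}(R^{D^{M}}_{X,\bar Y}\alpha(s),\alpha(s_{1}))$, and the curvature term $h(R^{D^{c}}_{\tilde X,\overline{\tilde Y}}s,s_{1})$ is the one requiring the most work---it unpacks to $h_{V}(R^{D^{V}}_{X,\bar Y}s,s_{1})$ plus the $\mathcal{D}_{X}(\alpha)$ correction involving $D^{M,V}$. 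Likewise, the second bracket in (iii), $[\phi^{V}_{s_{1}},k_{V}\phi^{V}_{s_{2}}k_{V}]=0$, comes from the \emph{both-vertical} slot (where $R^{D^{c}}_{s_{1},\bar s_{2}}=0$), not from a mixed one. These reassignments do not affect the correctness of your plan, but they will matter when you carry out the computation, since the horizontal-horizontal curvature block is where the delicate bookkeeping you anticipate actually lives.
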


We divide the proof of the above theorem in two stages. In a first
stage, we find the conditions for the first $tt^{*}$-equation to
hold on $(V,\circ ,h)$ (see Proposition \ref{prop-1-tt}). In  a
second stage, we compute the curvature of the Chern connection of
$h$ (see Lemma \ref{lem-tt}). This will readily imply the
conditions for the second $tt^{*}$-equation to hold on $(V,\circ
,h)$ and will conclude the proof of Theorem \ref{main-thm-tt}.
Details are as follows.

\begin{prop}\label{prop-1-tt} The first $tt^{*}$-equation holds on $(V,\circ , h)$
if and only if it holds on $(M, \circ_{M}, h_{M})$ and conditions
(\ref{tt-1}) and (\ref{tt-2}) are true.
\end{prop}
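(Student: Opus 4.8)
The plan is to expand the first $tt^{*}$-equation $(\partial^{D^{h}}\phi)_{Z_{1},Z_{2}}=0$ for the Chern connection $D^{h}$ of $h$ on the manifold $V$, evaluating it on all combinations of horizontal-lift and vertical arguments. Since $\phi_{W}(\cdot)=-W\circ(\cdot)$ is determined by the multiplication $\circ$ of (\ref{circ}), and since the multiplication is block-structured (horizontal$\times$horizontal lands horizontal, vertical$\times$vertical stays vertical, and the mixed product is $\alpha(v)\circ_{M}X$), I expect the equation to decouple into a small number of tensorial identities. The first step is therefore to identify $D^{h}$ on $T^{1,0}V$ in terms of $D^{M}$, $D^{V}$, $D$ and $\alpha$; because $h$ is assembled from $h_{M}$, $h_{V}$ and $\alpha$ exactly as $g$ was assembled in (\ref{metric}), its Chern connection should admit a decomposition parallel to the Levi-Civita formula of Lemma \ref{levi-civita}, the key difference being that now the relevant connections are the Chern connections rather than Levi-Civita connections, and one must keep track of $(1,0)$ versus $(0,1)$ parts.

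Next I would feed this into $(\partial^{D^{h}}\phi)_{Z_{1},Z_{2}}(Z_{3})=D^{h}_{Z_{1}}(Z_{2}\circ Z_{3})-Z_{2}\circ D^{h}_{Z_{1}}Z_{3}-(1\leftrightarrow2)-\phi_{[Z_{1},Z_{2}]}Z_{3}$ and test it case by case. With all three arguments horizontal lifts $\tilde X,\tilde Y,\tilde Z$, the bracket identities (\ref{curv})--(\ref{vect-curb}) together with flatness of $D$ give $[\tilde X,\tilde Y]=[X,Y]^{\widetilde{}}$, and the horizontal block of $\circ$ is just $\circ_{M}$; I expect this case to reduce precisely to the first $tt^{*}$-equation on $(M,\circ_{M},h_{M})$. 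The genuinely new content comes from the mixed cases. When one argument is a $D$-parallel vertical section $s$ and the others are horizontal, the term $Z_{2}\circ Z_{3}$ involves $\alpha(s)\circ_{M}X$, so the equation measures the failure of $D^{M}$ to commute with left-multiplication by $\alpha(s)$; this is exactly how condition (\ref{tt-1}) should emerge. When two arguments are vertical, comparing $D^{h}$ acting on $v_{1}\circ_{V}v_{2}$ against $D^{V}$, and using that the mixed product forces $e_{V}$ to play the role of the unit, should produce the correction term $(D^{V}_{X}e_{V})\circ_{V}s$ and yield condition (\ref{tt-2}).

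The main obstacle I anticipate is bookkeeping in the mixed cases: the Chern connection on $(V,h)$ mixes horizontal and vertical components (just as $\nabla_{\tilde X}s=\nabla_{s}\tilde X+D_{X}s$ did in Lemma \ref{levi-civita}), so the $(\partial^{D^{h}}\phi)$-terms do not cleanly separate, and one must carefully use the compatibility (\ref{compatibilitate}), the $D$-parallelism of $\circ_{V}$ and $e_{V}$ (from Propositions \ref{multiplication} and \ref{admissible-cond}), and the relation $\phi^{\dagger}_{\bar Z}=k\phi_{Z}k$ to collapse the mixed contributions into the stated conditions. I would organise the verification so that each tensorial identity is extracted from exactly one combination of argument types, then check the converse direction — that (\ref{tt-1}) and (\ref{tt-2}) together with the $tt^{*}$-equation on the base reconstitute $(\partial^{D^{h}}\phi)=0$ on all of $V$ — which should follow by running the same case analysis backwards. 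The remaining argument types (three vertical, or the fully horizontal case already handled) impose no new conditions, by an argument analogous to the end of the proof of Proposition \ref{multiplication}.
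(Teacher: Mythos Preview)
Your proposal is correct and follows essentially the same approach as the paper: compute the Chern connection $D^{c}$ of $h$ explicitly using $\mathcal W_{1}h(\mathcal W_{2},\mathcal W_{3})=h(D^{c}_{\mathcal W_{1}}\mathcal W_{2},\mathcal W_{3})$ for holomorphic sections, then evaluate $(\partial^{D^{c}}\phi)$ on all horizontal/vertical combinations and read off the conditions. One remark: the Chern connection turns out to be \emph{simpler} than the Levi--Civita connection you invoke as a model --- in particular $D^{c}_{s}\tilde Y=D^{c}_{s_{1}}s_{2}=0$ identically --- so the ``bookkeeping obstacle'' you anticipate is milder than in Lemma \ref{levi-civita}; also, $\phi^{\dagger}$ and the compatibility $(\ref{compatibilitate})$ play no role in the first $tt^{*}$-equation and can be dropped from your list of tools.
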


\begin{proof} Let $D^{c}$ be the Chern connection of $h$.
Using the relation
$$
{\mathcal W}_{1} h \left({\mathcal W}_{2}, {\mathcal W}_{3}\right)
= h\left( D^{c}_{\mathcal W_{1}}(\mathcal W_{2}), {\mathcal
W}_{3}\right) ,\quad \forall \mathcal W_{i}\in {\mathcal T}_{V}
$$
we obtain the expression of $D^{c}$ as follows:
\begin{align}
\nonumber& D^{c}_{\tilde{X}}\tilde{Y} = \left[
D^{M}_{X}Y\right]^{\widetilde{}},\quad h\left( D^{c}_{\tilde{X}}s,
\tilde{Y}\right) = h_{M}\left( D^{M}_{X}(\alpha (s)), Y\right)\\
\label{chern}& D^{c}_{s} \tilde{Y} = D^{c}_{s_{1}}s_{2} =0,\quad
h\left( D^{c}_{\tilde{X}}s, {s}_{1}\right) = h_{V}\left(
D^{V}_{X}s, s_{1}\right),
\end{align}
for any $s,s_{1},s_{2}\in \Gamma (V)$ and $X, Y\in {\mathcal
T}_{M}$. Using these relations, together with
$$
(\partial^{D^{c}} \phi )_{\mathcal W_{1}, {\mathcal W}_{2}} =
D^{c}_{\mathcal W_{1}} (\phi_{\mathcal W_{2}}) - D^{c}_{\mathcal
W_{2}} (\phi_{\mathcal W_{1}} ) - \phi_{[\mathcal W_{1},\mathcal
W_{2}]}
$$
and the flatness of $D$, we obtain
\begin{align*}
(\partial^{D^{c}}\phi )_{\tilde{X},\tilde{Y}}(\tilde{Z})&
=\left[ (\partial^{D^{M}} \phi^{M})_{X,Y}(Z)\right]^{\widetilde{}}\\
(\partial^{D^{c}}\phi )_{\tilde{X},\tilde{Y}}(s) &=  \left[ (
\partial^{D^{M}}\phi^{M}_{\alpha (s)})_{X,Y}\right]^{\widetilde{}} -
\tilde{X}\circ D^{c}_{\tilde{Y}}s + \tilde{Y}\circ
 D^{c}_{\tilde{X}}s\\
(\partial^{D^{c}}\phi )_{s_{1},s_{2}}(\tilde{Z})&=
(\partial^{D^{c}}\phi )_{s_{1},s_{2}}(s)=0
\end{align*}
for any $X,Y,Z\in {\mathcal T}_{M}$, $s,s_{1}\in \Gamma (V)$,
where
$$
\left( \partial^{D^{M}}\phi^{M}_{\alpha (s)}\right)_{X,Y} := -
D^{M}_{X}\left( \alpha (s)\circ_{M} Y\right) + D^{M}_{Y}\left(
\alpha (s)\circ_{M} X\right) + \alpha (s)\circ_{M} [ X,Y].
$$
If, moreover, $s_{1}$ is $D$-parallel, then
\begin{align*}
(\partial^{D^{c}}\phi )_{s_{1},\tilde{X}}(\tilde{Z}) &= \left[
D^{M}_{X}\left(\alpha (s_{1})\circ_{M}Z\right)  - \alpha (s_{1})
\circ_{M}
D^{M}_{X} (Z)\right]^{\widetilde{}}\\
(\partial^{D^{c}}\phi )_{s_{1},\tilde{X}}(s)& = D^{c}_{\tilde{X}}
(s\circ_{V}s_{1}) - s_{1}\circ D^{c}_{\tilde{X}}s.\\
\end{align*}
It follows that $\partial^{D^{c}}\phi =0$ if and only if
$\partial^{D^{M}}\phi^{M}=0$, relation (\ref{tt-1}) holds and the
following two relations hold as well:
\begin{align}
\nonumber& D^{c}_{\tilde{X}} (s\circ_{V}s_{1}) -
s_{1}\circ D^{c}_{\tilde{X}}s=0\\
\label{delucrat}& \left[ (\partial^{D^{M}}\phi^{M}_{\alpha
(s)})_{X,Y}\right]^{\widetilde{}} - \tilde{X}\circ
D^{c}_{\tilde{Y}}s + \tilde{Y}\circ
 D^{c}_{\tilde{X}}s=0,
\end{align}
for any $X,Y\in {\mathcal T}_{M}$, $s,s_{1}\in \Gamma (V)$ and
$D(s_{1})=0.$ In the remaining part of the proof we assume that
$\partial^{D^{M}}\phi^{M}=0$ and that relation (\ref{tt-1}) is
true and we show that relations (\ref{delucrat}) are equivalent to
(\ref{tt-2}). We begin with the first relation (\ref{delucrat}).
Notice that: for any $s_{2}\in \Gamma (V)$,
\begin{align} \nonumber h\left( s_{1}\circ D^{c}_{\tilde{X}}
s, s_{2}\right) &= g\left( D^{c}_{\tilde{X}}s, s_{1}\circ k_{V}
(s_{2})\right) = h\left( D^{c}_{\tilde{X}} s, k_{V}
(s_{1}\circ_{V}
k_{V}(s_{2}))\right)\\
\nonumber &= h_{V}\left( D^{V}_{X}s, k_{V}\left( s_{1}\circ_{V}
k_{V}(s_{2})\right)\right)= g_{V}\left( D^{V}_{X}s, s_{1}\circ_{V}k_{V}(s_{2})\right)\\
\label{relo}&= h_{V} \left( s_{1}\circ_{V} D^{V}_{X}s,
s_{2}\right) ,
\end{align}
where we used (\ref{chern}). Also from (\ref{chern}),
\begin{equation}\label{notices-1} h\left( D^{c}_{\tilde{X}}
(s\circ s_{1}), s_{2}\right)= h_{V}\left( D^{V}_{X} \left(
s\circ_{V}s_{1}\right), s_{2}\right) .
\end{equation}
Combining (\ref{relo}) and (\ref{notices-1}) we obtain
\begin{equation}\label{ro-1}
h\left( D^{c}_{\tilde{X}} (s\circ_{V}s_{1}) - s_{1}\circ
D^{c}_{\tilde{X}}s,s_{2} \right) = h_{V}\left(
D^{V}_{X}(s\circ_{V} s_{1}) - s_{1}\circ_{V} D^{V}_{X}s,
s_{2}\right).
\end{equation}
A similar computation shows that
$$
h\left( D^{c}_{\tilde{X}} (s\circ_{V}s_{1}) - s_{1}\circ
D^{c}_{\tilde{X}}s, \tilde{Z}\right)  = h_{M}\left(
D^{M}_{X}\left( \alpha (s\circ_{V}s_{1})\right) - \alpha
(s_{1})\circ_{M} D^{M}_{X} (\alpha (s)), Z\right) ,
$$
which follows from (\ref{tt-1}). We proved that the first relation
(\ref{delucrat}) gives
\begin{equation}\label{d-s-1}
D^{V}_{X} (s\circ_{V} s_{1}) = (D^{V}_{X}s)\circ_{V} s_{1},
\end{equation}
for any $X\in {\mathcal T}_{M}$, $s,s_{1}\in \Gamma (V)$ with
$D(s_{1})=0$, and this  is equivalent to (\ref{tt-2}) (easy
check). It remains to consider the second relation
(\ref{delucrat}). For this, one shows that
\begin{align*}
& h\left( \tilde{X}\circ D^{c}_{\tilde{Y}} s, s_{2}\right) =
h_{M}\left( X\circ_{M}
D^{M}_{Y}\left(\alpha (s)\right),\alpha (s_{2})\right)\\
& h\left( \tilde{X}\circ D^{c}_{\tilde{Y}}s, \tilde{Z}\right) =
h_{M}\left( X\circ_{M} D^{M}_{Y}\left(\alpha (s)\right), Z\right)
\end{align*}
which imply
\begin{align*}
&h\left(  \left[ (\partial^{D^{M}}\phi^{M}_{\alpha
(s)})_{X,Y}\right]^{\widetilde{}} - \tilde{X}\circ
D^{c}_{\tilde{Y}}s + \tilde{Y}\circ
 D^{c}_{\tilde{X}}s ,s_{2}\right) \\
 &=h_{M}\left( (\partial^{D^{M}}\phi^{M})_{X,Y}(\alpha (s)),
 \alpha (s_{2})\right)\\
&h\left( \left[ (\partial^{D^{M}}\phi^{M}_{\alpha
(s)})_{X,Y}\right]^{\widetilde{}} - \tilde{X}\circ
D^{c}_{\tilde{Y}}s  + \tilde{Y}\circ
 D^{c}_{\tilde{X}}s ,Z\right)\\
&=h_{M}\left( (\partial^{D^{M}}\phi^{M})_{X,Y}(\alpha (s)),
 Z\right) ,
\end{align*}
for any $X,Y\in {\mathcal T}^{1,0}_{M}$ and $s,s_{1},s_{2}\in
\Gamma (V)$, with $D(s_{1})=0.$ Thus,  the second relation
(\ref{delucrat}) is a consequence of the first $tt^{*}$-equation
$\partial^{D^{M}}\phi^{M}=0$. Our claim follows.

\end{proof}

\begin{lem}\label{lem-tt} The curvature of the Chern connection $D^{c}$ of $h$
has the following expression: for any $X,Y,Z\in {\mathcal T}_{M}$
and $s,s_{1},s_{2}\in \Gamma (V)$,
\begin{align*}
&R^{D^{c}}_{\tilde{X},\overline{\tilde{Y}}} \tilde{Z} =\left[
R^{D^{M}}_{X,\bar{Y}}Z\right]^{\widetilde{}},\quad h\left(
R^{D^{c}}_{\tilde{X}, \overline{\tilde{Y}}}s, \tilde{Z}\right) =
h_{M}\left( R^{D^{M}}_{X, \bar{Y}}\alpha
(s), Z\right) ,\\
&R^{D^{c}}_{s, \bar{\tilde{X}}}\tilde{Y}=
R^{D^{c}}_{\tilde{X},\bar{s}}\tilde{Y}= R^{D^{c}}_{s_{1},
\bar{s}_{2}}\tilde{Y} = R^{D^{c}}_{s_{1},\bar{\tilde{X}}} s =
R^{D^{c}}_{\tilde{X}, \bar{s}_{1}} s =
R^{D^{c}}_{s_{1},\bar{s}_{2}}s=0
\end{align*}
and
\begin{equation}\label{r-d}
h\left( R^{D^{c}}_{\tilde{X}, \overline{\tilde{Y}}}s, s_{1}\right)
= h_{V} \left( R^{D^{V}}_{X,\bar{Y}}s, s_{1}\right) + h_{M} \left(
{\mathcal D}_{X}(\alpha )(s),D^{M}_{Y} (\alpha (s_{1}))-\alpha
(D^{M,V}_{Y}s_{1})\right) .
\end{equation}

\end{lem}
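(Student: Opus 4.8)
The plan is to build everything on the explicit $(1,0)$-part of the Chern connection $D^{c}$ of $h$ recorded in (\ref{chern}) during the proof of Proposition \ref{prop-1-tt}, combined with the fact that the curvature of a Chern connection is of type $(1,1)$. Hence only the components $R^{D^{c}}_{W_{1},\bar{W}_{2}}$ are nonzero, and it suffices to let $W_{1},W_{2}$ range over the holomorphic frame of $T^{1,0}V$ adapted to the horizontal--vertical splitting, namely the lifts $\tilde{X}$ and the (holomorphic) vertical sections $s$. For holomorphic $W_{1},W_{2},\sigma$ one has $D^{c}_{\bar{W}_{2}}\sigma=\bar{\partial}_{\bar{W}_{2}}\sigma=0$ and $[W_{1},\bar{W}_{2}]=0$, so the curvature collapses to $R^{D^{c}}_{W_{1},\bar{W}_{2}}\sigma=-D^{c}_{\bar{W}_{2}}(D^{c}_{W_{1}}\sigma)$; pairing with a holomorphic $\tau$ and using compatibility of $D^{c}$ with $h$ yields the single master identity
$$
h(R^{D^{c}}_{W_{1},\bar{W}_{2}}\sigma,\tau)=-\bar{W}_{2}\,h(D^{c}_{W_{1}}\sigma,\tau)+h(D^{c}_{W_{1}}\sigma,D^{c}_{W_{2}}\tau),
$$
from which every entry of the lemma is read off by specialising $W_{1},W_{2},\sigma,\tau$.

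Then I would dispose of all six vanishing entries at once. If $W_{1}=s$ is vertical, (\ref{chern}) gives $D^{c}_{s}\sigma=0$ for every lift and every section, so the master identity returns $0$. If instead $W_{2}=s$ is vertical (so $W_{1}=\tilde X$), then $D^{c}_{W_{2}}\tau=D^{c}_{s}\tau=0$ by the same formula, while $h(D^{c}_{\tilde X}\sigma,\tau)$ is, by (\ref{chern}), the pull-back of a function on $M$ and is therefore annihilated by the vertical antiholomorphic field $\bar{s}$; both terms vanish.

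Next I treat the horizontal--horizontal case $W_{1}=\tilde X$, $W_{2}=\tilde Y$. Feeding (\ref{chern}) into the master identity and comparing it with the same identity applied on the base to $D^{M}$ gives $R^{D^{c}}_{\tilde X,\overline{\tilde Y}}\tilde Z=[R^{D^{M}}_{X,\bar Y}Z]^{\widetilde{}}$ --- verified by pairing against both $\tilde T$ and $s_{1}$, the latter using that $\alpha(s_{1})$ is holomorphic --- and $h(R^{D^{c}}_{\tilde X,\overline{\tilde Y}}s,\tilde Z)=h_{M}(R^{D^{M}}_{X,\bar Y}\alpha(s),Z)$; here one uses $h(\tilde X,\tilde Y)=h_{M}(X,Y)$ and that $\overline{\tilde Y}$ acts on pull-back functions through its projection $\bar Y$.

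The substance is the mixed entry (\ref{r-d}), with $\sigma=s$, $\tau=s_{1}$. The key preliminary is the identification $(D^{c}_{\tilde X}s)^{\mathrm{vert}}=D^{M,V}_{X}s$: comparing the relation that (\ref{chern}) imposes on $(D^{c}_{\tilde X}s)^{\mathrm{vert}}$ with the defining relation of the Chern connection $D^{M,V}$ of $h_{V}-\alpha^{*}h_{M}$, and using $X\,h_{V}(s,s_{1})=h_{V}(D^{V}_{X}s,s_{1})$ together with $X\,h_{M}(\alpha(s),\alpha(s_{1}))=h_{M}(D^{M}_{X}\alpha(s),\alpha(s_{1}))$, shows the two agree after testing against all holomorphic $s_{1}$. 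Splitting $D^{c}_{\tilde X}s$ and $D^{c}_{\tilde Y}s_{1}$ into horizontal and vertical parts and invoking $h(v,\tilde X)=h_{M}(\alpha(v),X)$ gives $h(D^{c}_{\tilde X}s,D^{c}_{\tilde Y}s_{1})=h_{M}(D^{M}_{X}\alpha(s),D^{M}_{Y}\alpha(s_{1}))+(h_{V}-\alpha^{*}h_{M})((D^{c}_{\tilde X}s)^{\mathrm{vert}},(D^{c}_{\tilde Y}s_{1})^{\mathrm{vert}})$. Substituting this into the master identity, replacing $-\bar Y\,h_{V}(D^{V}_{X}s,s_{1})$ through the $D^{V}$-curvature via its own master identity, and rewriting all derivatives in terms of ${\mathcal D}_{X}(\alpha)(s)=D^{M}_{X}\alpha(s)-\alpha(D^{V}_{X}s)$ makes the cross-terms in $\alpha(D^{V}_{\bullet}\bullet)$ cancel and collapses everything to (\ref{r-d}). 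I expect this last bookkeeping to be the only real obstacle: it must be carried out respecting the sesquilinearity of $h_{M},h_{V}$ and $h_{V}-\alpha^{*}h_{M}$, since it is precisely the Hermitian symmetry $h_{M}(a,b)=\overline{h_{M}(b,a)}$ that lands the surviving cross-terms in matching slots so that they cancel, whereas a misplaced slot would leave spurious conjugate terms.
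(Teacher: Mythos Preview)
Your proposal is correct and follows essentially the same route as the paper. Both rely on the explicit $(1,0)$-part of $D^{c}$ recorded in (\ref{chern}), the fact that the Chern curvature is of type $(1,1)$, and the identification $(D^{c}_{\tilde X}s)^{\mathrm{vert}}=D^{M,V}_{X}s$ (which the paper writes as $s_{0}$ in (\ref{ver})). The one organisational difference is that the paper computes $\bar{\partial}_{\bar Y}s_{0}$ and $\bar{\partial}_{\bar Y}X_{0}$ directly by differentiating (\ref{ver}) and (\ref{hor}), whereas your master identity $h(R^{D^{c}}_{W_{1},\bar W_{2}}\sigma,\tau)=-\bar W_{2}\,h(D^{c}_{W_{1}}\sigma,\tau)+h(D^{c}_{W_{1}}\sigma,D^{c}_{W_{2}}\tau)$ shifts the antiholomorphic derivative to the second slot via metric compatibility; this makes the bookkeeping a bit cleaner but lands on exactly the expression the paper calls $E_{1}+E_{2}$ in (\ref{prel-r})--(\ref{math-e}), and the final cancellation you describe is precisely the paper's computation of $E_{1}+E_{2}$.
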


\begin{proof}
We only show (\ref{r-d}), the other components of $R^{D^{c}}$ can
be obtained by straightforward computations, using (\ref{chern}).
To prove (\ref{r-d}), one first notices, from
$$
h\left( D^{c}_{\tilde{X}}s, \tilde{Z}\right) = h_{M}\left(
D^{M}_{X}(\alpha (s)), Z\right), \quad h\left( D^{c}_{\tilde{X}}s,
{s}_{1}\right) = h_{V}\left( D^{V}_{X}s, s_{1}\right),
$$
that
$$
D^{c}_{\tilde{X}} s = s_{0} +\tilde{X}_{0}
$$
where $s_{0}$ is a section of $V$ (not necessarily holomorphic),
determined by
\begin{equation}\label{ver}
\left( h_{V}-\alpha^{*}h_{M}\right) (s_{0}, s_{1}) = h_{V}\left(
D^{V}_{X}s, s_{1}\right) - h_{M}\left( D^{M}_{X}(\alpha (s)),
\alpha (s_{1})\right),
\end{equation}
for any $s_{1}\in \Gamma (V)$, and $X_{0}\in {\mathcal
T}^{1,0}_{M}$ is determined by
\begin{equation}\label{hor}
h_{M}(X_{0}, Z) = h_{M}(D^{M}_{X}(\alpha (s)), Z) - h_{M}( \alpha
(s_{0}), Z),
\end{equation}
for any $Z\in {\mathcal T}_{M}$.  Now,
\begin{equation}\label{curv-dc}
h\left( R^{D^{c}}_{\tilde{X},\bar{\tilde{Y}}}s, s_{1}\right) = -
h\left( \bar{\partial}_{\bar{\tilde{Y}}}
D^{c}_{\tilde{X}}s,s_{1}\right) = - h_{V}\left(
\bar{\partial}_{\bar{Y}}s_{0} ,s_{1}\right) -
h_{M}\left(\bar{\partial}_{\bar{Y}} X_{0}, \alpha (s_{1})\right),
\end{equation}
where in the first equality we used that $X, Y\in {\mathcal
T}_{M}$ and the flatness of $D$ (hence
$[\tilde{X},\bar{\tilde{Y}}]=0$), and also that $s$ is holomorphic
(hence $\bar{\partial}_{\bar{Y}}s=0$). Therefore, we need to
compute $\bar{\partial}_{\bar{Y}}s_{0}$ and
$\bar{\partial}_{\bar{Y}} X_{0}.$ By applying
$\bar{\partial}_{\bar{Y}}$ to (\ref{ver}), it is easy to see that
\begin{align}
\nonumber\left( h_{V}-\alpha^{*}h_{M}\right) \left(
\bar{\partial}_{\bar{Y}}s_{0}, s_{1}\right)&= h_{M}\left(
R^{D^{M}}_{X,\bar{Y}}\alpha (s), \alpha (s_{1})\right)
 - h_{V}\left( R^{D^{V}}_{X,\bar{Y}}s, s_{1}\right)\\
\nonumber & + h_{M}\left( D^{M}_{X}(\alpha (s)), \alpha
 (D^{M,V}_{Y}s_{1}) - D^{M}_{Y}(\alpha (s_{1}))\right)\\
\label{aj}& + h_{V}\left( D^{V}_{X}s, D^{V}_{Y}s_{1} -
D^{M,V}_{Y}s_{1}\right).
\end{align}
Similarly, one computes,
\begin{equation}
h_{M}\left( \bar{\partial}_{\bar{Y}}X_{0}, Z\right) = -
h_{M}\left( R^{D^{M}}_{X,\bar{Y}}\alpha (s), Z\right) -
h_{M}\left( \alpha (\bar{\partial}_{\bar{Y}}s_{0}), Z\right) .
\end{equation}
Combining this relation with (\ref{curv-dc}), we obtain
\begin{align*}
& h\left( R^{D^{c}}_{\tilde{X},\bar{\tilde{Y}}}s, s_{1}\right) = -
h_{V}\left( \bar{\partial}_{\bar{Y}}s_{0}, s_{1}\right) -
h_{M}\left( \bar{\partial}_{\bar{Y}}X_{0}, \alpha
(s_{1})\right)\\
& = (- h_{V}+\alpha^{*}h_{M})
\left(\bar{\partial}_{\bar{Y}}s_{0},s_{1}\right) + h_{M}\left(
R^{D^{M}}_{X,\bar{Y}}\alpha (s),\alpha (s_{1})\right) ,
\end{align*}
or, using (\ref{aj}),
\begin{align}
\nonumber h\left( R^{D^{c}}_{\tilde{X},\bar{\tilde{Y}}}s,
s_{1}\right) &= - h_{V}\left( D^{V}_{X}s, D^{V}_{Y} s_{1}\right) +
h_{M}\left
(D^{M}_{X}(\alpha (s)), D^{M}_{Y}(\alpha (s_{1}))\right)\\
\nonumber &+ h_{V} \left( D^{V}_{X}s, D^{M,V}_{Y}s_{1}\right) -
h_{M}\left( D^{M}_{X}( \alpha (s)) , \alpha
(D^{M,V}_{Y}s_{1})\right)\\
\label{prel-r}&+  h_{V}\left( R^{D^{V}}_{X,\bar{Y}}s,s_{1}\right)
.
\end{align}
Denote by $E_{1}$ and $E_{2}$  the term on the first line,
respectively the second line, on the right hand side of
(\ref{prel-r}). Remark that
\begin{align*}
E_{1}= & -\bar{Y} h_{V}\left( D^{V}_{X}s, s_{1}\right) + h_{V}
\left( \bar{\partial}_{\bar{Y}} D^{V}_{X}s, s_{1}\right) +
\bar{Y}h_{M}\left( D^{M}_{X}(\alpha (s)), \alpha (s_{1})\right)\\
& - h_{M}\left( \bar{\partial}_{\bar{Y}}
D^{M}_{X}(\alpha (s)), \alpha (s_{1})\right)\\
& = - \bar{Y} h_{V}\left( D^{V}_{X}s, s_{1}\right) - h_{V}\left(
R^{D^{V}}_{X,\bar{Y}}(s), s_{1}\right) + \bar{Y}h_{M}\left(
D^{M}_{X}
(\alpha (s)), \alpha (s_{1})\right)\\
& + h_{M} \left( R^{D^{M}}_{X,\bar{Y}}\alpha (s) , \alpha
(s_{1})\right) .
\end{align*}
A similar computation shows that
\begin{align*}
E_{2} &= \left( h_{V}-\alpha^{*}h_{M}\right) \left( D^{V}_{X}s,
D^{M,V}_{Y}s_{1}\right) - h_{M} \left({\mathcal D}_{X}(\alpha
)(s),\alpha (D^{M,V}_{Y}s_{1})\right)\\
&= \bar{Y}h_{V}\left( D^{V}_{X}s,s_{1}\right) - \bar{Y}
h_{M}\left( \alpha (D^{V}_{X}s),\alpha (s_{1})\right) +
(h_{V}-\alpha^{*}h_{M})\left( R^{D^{V}}_{X,\bar{Y}}s,
s_{1}\right)\\
& - h_{M}\left( {\mathcal D}_{X}(\alpha )(s) , \alpha (
D^{M,V}_{Y}s_{1})\right)
\end{align*}
and we obtain
\begin{equation}\label{math-e}
E_{1}+ E_{2} = h_{M}\left( {\mathcal D}_{X}(\alpha ) (s),
D^{M}_{Y}(\alpha (s_{1})) - \alpha (D^{M,V}_{Y}s_{1})\right) .
\end{equation}
Combining (\ref{prel-r}) with (\ref{math-e}) we obtain
(\ref{r-d}), as required.
\end{proof}

Theorem \ref{main-thm-tt} follows from Proposition
\ref{prop-1-tt}, Lemma \ref{lem-tt} and the following brackets:
\begin{align*}
&[\phi_{\tilde{X}}, k \phi_{\tilde{Y}}k] (\tilde{Z})
=[\phi^{M}_{X}, k_{M}\phi^{M}_{Y}k_{M}] (Z)^{\widetilde{}},\
[\phi_{\tilde{X}}, k \phi_{\tilde{Y}}k] (s)= [ \phi^{M}_{X},
k_{M}\phi^{M}_{Y}k_{M}](\alpha (s))^{\widetilde{}}\\
&[\phi_{\tilde{X}}, k \phi_{s}k] (\tilde{Z}) =[\phi^{M}_{X},
k_{M}\phi^{M}_{\alpha (s)} k_{M}] (Z)^{\widetilde{}},\
[\phi_{\tilde{X}}, k \phi_{s}k] ({s}_{1}) =[\phi^{M}_{X},
k_{M}\phi^{M}_{\alpha (s)} k_{M}] (\alpha (s_{1}))^{\widetilde{}}\\
&[\phi_{s}, k\phi_{\tilde{X}}k] (\tilde{Z}) = [ \phi^{M}_{\alpha
(s)}, k_{M}\phi^{M}_{X}k_{M}] (Z)^{\widetilde{}}, \ [\phi _{s},
k\phi_{\tilde{X}}k] (s_{1})= [ \phi^{M}_{\alpha
(s)},k_{M}\phi^{M}_{X}k_{M}]\left( \alpha
(s_{1})\right)^{\widetilde{}}\\
&[\phi_{s_{1}}, k \phi_{s_{2}} k] (\tilde{Z})= [\phi^{M}_{\alpha
(s_{1})}, k_{M} \phi^{M}_{\alpha (s_{2})}k_{M}](Z)^{
\widetilde{}},\ [\phi_{s_{1}}, k \phi_{s_{2}} k] (s_{3}) = [
\phi^{V}_{s_{1}}, k_{V} \phi^{V}_{s_{2}}k_{V}](s_{3})
\end{align*}

\begin{rem}\label{cov-chern}{\rm In the setting of Theorem \ref{main-thm-tt}, it
may be checked that the Chern connection $D^{c}$ of $h$ preserves
$g$, i.e. $D^{c}(g)=0$, if and only if $D^{M}(g_{M})=0$ and
$D^{V}(g_{V})=0.$} \end{rem}

\begin{ex}\label{detailed-ex} {\rm
We now construct an example where all conditions from Proposition
\ref{multiplication} and Theorem \ref{main-thm-tt} are satisfied
and hence the $tt^{*}$-equations hold on $V$. Consider a complex
semisimple Frobenius manifold $(M, \circ_{M} ,e_{M},g_{M})$ with
metric potential $\eta$ in canonical coordinates $(u^{1}, \cdots ,
u^{n}).$ Define a diagonal real structure $k_{M}$ on $T^{1,0}M$,
like in Example \ref{capi}. Let $V\rightarrow M$ be a rank
$n$-holomorphic vector bundle and assume there is an (holomorphic)
bundle isomorphism $\alpha : V \rightarrow T^{1,0}M$. Identifying
$V$ with $T^{1,0}M$ using $\alpha$, we obtain a multiplication
$\circ_{V}$ and a real structure $k_{V}$ on $V$, induced by
$\circ_{M}$ and $k_{M}$ respectively. Let $g_{V}:= k_{0}
\alpha^{*}g_{M}$, where $k_{0}\in \mathbb{R}\setminus \{ 1\}$ is
fixed. Note  that $g_{V}-\alpha^{*}g_{M}= (k_{0}-1)
\alpha^{*}g_{M}$ is non-degenerate and  $g_{V}$ is compatible with
$k_{V}.$ It remains to define the connection $D$. It is determined
by the condition that an (holomorphic) section $s$ of $V$ is
$D$-parallel if and only if $\alpha (s)$ has constant coefficients
in the coordinate system $(u^{1}, \cdots , u^{n})$. In this
setting, we claim that all conditions from Proposition
\ref{multiplication} and Theorem \ref{main-thm-tt} are satisfied.
This may be checked easily. For example, relation (\ref{tt-1})
follows from
$$
D^{M}_{X}\left( \frac{\partial}{\partial u^{i}}\circ_{M}Z \right)
= \frac{\partial}{\partial u^{i}} \circ_{M} D^{M}_{X}(Z), \quad
\forall X,Z\in {\mathcal T}_{M}
$$
which, in turn, is a consequence of (\ref{chern-diag}). Similarly,
to prove (\ref{curvature}) one remarks that the pseudo-Hermitian
metric $h_{V}:= g_{V}(\cdot , k_{V}\cdot )$ is given by $h_{V}=
k_{0} \alpha^{*}h_{M}$ and its Chern connection $D^{V}$ is related
to $D^{M}$ by
\begin{equation}\label{chern-i}
D_{X}^{V}s = \alpha^{-1} D^{M}_{X}\left(\alpha (s)\right) ,\quad
\forall s\in \Gamma (V), \quad \forall X\in {\mathcal T}_{M}
\end{equation}
and  is flat (because $D^{M}$ is flat, see Example \ref{capi}).
From (\ref{chern-i}),  ${\mathcal D}(\alpha )=0$ and relation
(\ref{curvature}) follows. In a similar way one may check the
other conditions from Proposition \ref{multiplication} and Theorem
\ref{main-thm-tt} and hence the $tt^{*}$-equations hold on $V$.
Finally, remark that $D(g_{V})\neq 0$ (unless $\eta_{i}$ are
constant) and from Proposition \ref{admissible-cond}, $g$ is not
an admissible metric on the $F$-manifold $(V,\circ ,e_{V}).$ From
Remark \ref{cov-chern}, the Chern connection of $h$ preserves $g$
(because  $D^{M}(g_{M})=0$ and $D^{V}(g_{V})=0$.}
 \end{ex}

It is natural to ask if the Frobenius manifold
$(V=M\times\mathbb{C}^{r},\circ ,e_{V},g)$ from Theorem
\ref{trivial} (with $\mathbb{K}=\mathbb{C}$) may be given a real
structure $k$ in the framework of this section, such that the
$tt^{*}$-equations hold. It turns out that imposing the
$tt^{*}$-equations in this setting is a very strong condition, due
to the very special form of the morphism $\alpha$ from (the proof
of) Theorem \ref{trivial}. More precisely, remark that if $k_{M}$
is a real structure on $M$ (compatible with $g_{M}$) and $k_{V}$
is a real structure on the trivial bundle $V=
M\times\mathbb{C}^{r}\rightarrow M$ (compatible with $g_{V}$),
such that $\alpha\circ k_{V} = k_{M}\circ\alpha$, with
$\alpha=\lambda\otimes e_{M}$ as in the proof of Theorem
\ref{trivial}, then $k_{M}(e_{M})$ is a constant multiple of
$e_{M}$. However, this fact, together with the $tt^{*}$-equations
on $M$ (which, according to Theorem \ref{main-thm-tt}, is a
necessary condition for the $tt^{*}$-equations to hold on $V$)
impose strong restrictions on the base Frobenius manifold
$(M,\circ_{M},e_{M},g_{M})$, as shown in the following proposition
(the condition $D(g)=0$ below  holds for a large class of
$tt^{*}$-structures, e.g. CV-structures or harmonic Frobenius
structures \cite{referee2, sabbah-art}).

\begin{prop}\label{second} Assume that the $tt^{*}$-equations hold on
a complex Frobenius manifold $(M, \circ , e, g)$ with real
structure $k$, compatible with $g$,  and  $k(e) =\mu e$, where
$\mu$ is a constant. Assume, moreover, that the Chern connection
$D$ of $h = g(\cdot , k\cdot )$ preserves $g$. Then the Frobenius
manifold $(M,\circ , e, g)$ is trivial, $k$ is constant in flat
coordinates for $g$ and satisfies
\begin{equation}\label{morphism}
k(Z_{1}\circ Z_{2}) = \bar{\mu} k(Z_{1})\circ k(Z_{2}),\quad
\forall Z_{1}, Z_{2}\in {\mathcal T}^{1,0}_{M}.
\end{equation}
\end{prop}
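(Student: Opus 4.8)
The plan is to feed the two $tt^{*}$-equations, specialized to the unit field $e$, into the algebra of the Frobenius multiplication: first I would extract the morphism identity (\ref{morphism}), then flatness of the Chern connection $D$, and finally read off triviality from a holomorphic/anti-holomorphic clash. As preliminaries, from $k^{2}=\mathrm{Id}$ and $k(e)=\mu e$ I get $e=k(\mu e)=\bar\mu\mu e$, so $|\mu|^{2}=1$ and in particular $\mu\neq0$. Since $D$ is the Chern connection of $h=g(\cdot,k\cdot)$ it preserves $h$, and by hypothesis it also preserves $g$; comparing the two compatibility identities and using $D^{0,1}=\bar\partial$ should give $D_{X}(kZ)=k(\bar\partial_{\bar X}Z)$ for all $X\in\mathcal T^{1,0}_{M}$, i.e. $D$ is compatible with $k$. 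Taking $Z=e$ (holomorphic, so $\bar\partial e=0$) yields $D_{X}(\mu e)=0$, hence $De=0$: \emph{the unit is $D$-parallel}. I would also record $\phi_{e}=-\mathrm{Id}$ and $\phi^{\dagger}_{\bar e}=k\phi_{e}k=-\mathrm{Id}$.

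Next I would prove (\ref{morphism}). Because $De=0$ in every direction, $R^{D}_{X,\bar Y}(e)=0$ for all $X,Y$. Evaluating the second $tt^{*}$-equation $R^{D}_{X,\bar Y}+[\phi_{X},\phi^{\dagger}_{\bar Y}]=0$ on $e$ and using $\phi^{\dagger}_{\bar Y}=k\phi_{Y}k$, $\phi_{e}=-\mathrm{Id}$ and $k(e)=\mu e$, a short computation gives $[\phi_{X},k\phi_{Y}k](e)=\bar\mu\,(X\circ kY)-k(Y\circ kX)$, so $k(Y\circ kX)=\bar\mu\,(X\circ kY)$. Replacing $X$ by $kX$ and using $k^{2}=\mathrm{Id}$ together with commutativity of $\circ$ turns this into $k(X\circ Y)=\bar\mu\,kX\circ kY$, which is exactly (\ref{morphism}).

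Now I would deduce flatness and identify $D$. Identity (\ref{morphism}) says $k\phi_{Y}k=\bar\mu\,\phi_{kY}$, so the second $tt^{*}$-equation reads $R^{D}_{X,\bar Y}=-\bar\mu\,[\phi_{X},\phi_{kY}]$; but $\phi_{X}\phi_{kY}=\phi_{X\circ kY}=\phi_{kY}\phi_{X}$ by associativity and commutativity, whence $R^{D}=0$. To see $D=\nabla$ (the Levi-Civita connection of $g$), I would write $A:=D^{1,0}-\nabla$, a $(1,0)$-form valued in $g$-skew endomorphisms; the first $tt^{*}$-equation combined with the Frobenius potentiality $d^{\nabla}\phi=0$ gives $[A_{X},\phi_{Y}]=[A_{Y},\phi_{X}]$, and evaluating this on $e$ (using $\phi_{e}=-\mathrm{Id}$ and $A_{X}e=D_{X}e=0$) forces the torsion of $D$ to vanish. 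A flat, torsion-free, metric connection is the Levi-Civita connection, so $D=\nabla$; hence $\nabla h=0$, therefore $\nabla k=0$, i.e. $k$ is constant in flat coordinates for $g$.

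Finally, triviality. In flat coordinates $(t^{i})$ for $g$ the unit $e$, the real structure $k$ and the metric $g$ all have constant coefficients, while the structure constants $c^{m}_{ab}$ of $\circ$ are holomorphic functions. Writing (\ref{morphism}) in this frame gives $\overline{c^{\,m}_{ab}}\,K^{n}_{m}=\bar\mu\,K^{p}_{a}K^{q}_{b}\,c^{\,n}_{pq}$, where $K$ is the constant invertible matrix of $k$; the left-hand side is anti-holomorphic and the right-hand side holomorphic, so both are locally constant, forcing the $c^{m}_{ab}$ to be constant. Thus $\nabla(\circ)=0$ and $(M,\circ,e,g)$ is trivial. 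I expect the main obstacle to be the identification $D=\nabla$: flatness drops out immediately once (\ref{morphism}) is in hand, but ruling out torsion (so that $D$ is genuinely Levi-Civita, and $k$ is therefore constant in the flat coordinates of $g$ rather than merely $D$-parallel) is where the first $tt^{*}$-equation, the potentiality of $\nabla$, and the parallelism of the unit must be combined carefully; the concluding holomorphic versus anti-holomorphic rigidity is then the clean punchline.
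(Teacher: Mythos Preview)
Your proof is correct and follows essentially the same path as the paper's: both derive $De=0$ from $Dk=0$ and $k(e)=\mu e$, extract the morphism identity from the second $tt^{*}$-equation applied to $e$, use it to get $[\phi,\phi^{\dagger}]=0$ and hence $R^{D}=0$, then invoke the first $tt^{*}$-equation to pin down $D^{(1,0)}=\nabla$, and finally read off triviality from the morphism identity. The only cosmetic differences are that the paper argues $D^{(1,0)}=\nabla$ by producing a $D$-parallel holomorphic frame and showing it commutes (your torsion computation $A_{X}Y=A_{Y}X$ is the same statement), and the paper obtains constancy of $Z_{i}\circ Z_{j}$ by applying $\bar\partial$ to the morphism identity rather than by your holomorphic-versus-anti-holomorphic coordinate argument; these are the same mechanism. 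One tiny slip: you write ``$A_{X}e=D_{X}e=0$'', but $A_{X}e=D_{X}e-\nabla_{X}e$, so you are also using $\nabla e=0$ (which holds on any Frobenius manifold).
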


\begin{proof} Since $D$ preserves $g$ and $h$, it preserves $k$ as
well (i.e. $D_{X}(k(Y)) = k (D_{\bar{X}}Y)$, for any $Y\in
{\mathcal T}^{1,0}_{M}$ and complex vector field $X$). From
$k(e)=\mu e$,
$$
\mu D_{Z}e = D_{Z}\left( k(e)\right) = k \left(
\bar{\partial}_{\bar{Z}}e\right) =0,\quad \forall Z\in {\mathcal
T}^{1,0}_{M}
$$
where we used that $e\in {\mathcal T}_{M}.$ Thus, $D(e)=0$,
$R^{D}_{Z_{1}, \bar{Z}_{2}} (e)=0$ and, from the second
$tt^{*}$-equation,
\begin{equation}\label{ad-j}
[\phi_{Z_{1}} , \phi^{\dagger}_{\bar{Z}_{2}}](e)=0,\quad \forall
Z_{1}, Z_{2}\in {\mathcal T}^{1,0}_{M},
\end{equation}
where $\phi_{X}(Y) = - X\circ Y$ is the Higgs field and
$\phi^{\dagger}_{\bar{Z}} = k\phi_{Z}k$ is the $h$-adjoint of
$\phi$. Relation (\ref{ad-j}) implies (\ref{morphism}) (easy
check) and also $\phi^{\dagger}_{\bar{Z}}=\bar{\mu}\phi_{k(Z)}$,
hence
\begin{equation}\label{cro-1}
[\phi_{Z_{1}}, \phi^{\dagger}_{\bar{Z}_{2}}]=0, \quad \forall
Z_{1}, Z_{2}\in {\mathcal T}^{1,0}_{M}.
\end{equation}
From (\ref{cro-1}) and the second $tt^{*}$-equation again, we
obtain that $D$ is flat. Consider a local frame $\{ Z_{1}, \cdots
, Z_{n}\}$ of $T^{1,0}M$, formed by $D$-parallel (holomorphic)
vector fields. Remark that $k(Z_{i})$ are also $D$-parallel and,
in particular, holomorphic. We claim that $[Z_{i}, Z_{j}]=0.$ This
follows from the first $tt^{*}$-equation, which implies
$$
0= (\partial^{D}\phi )_{Z_{i}, Z_{j}} (e) =  D_{Z_{i}}
(\phi_{Z_{j}}) (e) - D_{Z_{j}} (\phi_{Z_{i}}) (e) + [Z_{i},Z_{j}]
= [Z_{i},Z_{j}],
$$
where we used $D(Z_{i})= D(e)=0.$ Since $g(Z_{i}, Z_{j})$ is
constant (because $D(g)=0$, $D(Z_{i})=0$) and $Z_{i}$ are
holomorphic and commute, $\{ Z_{1}, \cdots , Z_{n}\}$ is the basis
of fundamental vector fields associated to a flat coordinate
system $(t^{1},\cdots , t^{n})$ for $g$. Also, $D^{(1,0)}$
coincides with the Levi-Civita connection $\nabla$ of $g$. Thus
$Z_{i}$ and $k(Z_{i})$ are $\nabla$-parallel, hence $k$ is
constant in the coordinate system $(t^{1},\cdots , t^{n})$. It
remains to show that $\circ$ is also constant in this coordinate
system, i.e. $Z_{i}\circ Z_{j}$ is $\nabla$-parallel, for any
$i,j$. For this, we apply $\bar{\partial}_{\bar{Z}}$ to relation
(\ref{morphism}). We obtain, for any $Z\in {\mathcal
T}^{1,0}_{M}$,
$$
\bar{\partial}_{\bar{Z}} \left( k (Z_{i}\circ Z_{j})\right) =
\bar{\mu}\bar{\partial}_{\bar{Z}}\left( k(Z_{i})\right)\circ Z_{2}
+ \bar{\mu}Z_{1}\circ \bar{\partial}_{\bar{Z}}\left(
k(Z_{i})\right) + \bar{\mu} \bar{\partial}_{\bar{Z}} (\circ )
\left( k(Z_{1}), k(Z_{2})\right)
$$
and the right hand side of this expression is zero
($\bar{\partial}_{\bar{Z}} (\circ ) =0$ because $\circ$ is
holomorphic). We proved that $Z_{i}\circ Z_{j}$ is $D$-parallel,
hence also $\nabla$-parallel, and the Frobenius manifold is
trivial.
\end{proof}

L. David: ``Simion Stoilow'' Institute of
the Romanian Academy, Research Unit no. 7, P.O. Box 1-764,
Bucharest, Romania; e-mail: liana.david@imar.ro


\begin{thebibliography}{99}


\bibitem{cortes} V. Cortes: {\em Topological anti-topological fusion equations},
in: O. Kowalski, E. Musso, D. Perone (Eds.), Complex Contact and
Symmetric manifolds, Progress in Mathematics 234, Birkhauser 2005.



\bibitem{referee1} B. Dubrovin: {\em Geometry and integrability of
topological-antitopological fusion}, Comm. Math. Physics 152
(1992), p. 539-564.

\bibitem{dubrovin} B. Dubrovin: {\em Geometry of $2D$-topological field theory}, Integrable Systems and quantum
groups (M. Francaviglia and M. Marcolli eds), Aspects of
Mathematics, vol. E36,Vieweg 2004.


\bibitem{manin} C. Hertling, Y. Manin: {\em Weak Frobenius manifolds},
Int. Math. Res. Notices 6 (1999), p. 277-286.


\bibitem{referee2} C. Hertling: {\em $tt^{*}$-geometry, Frobenius
manifolds, their connections and the construction for
singularities}, J. Reine Angew. Math. 555 (2003), p. 77-161.

\bibitem{hertling-book} C. Hertling: {\em Frobenius manifolds and moduli spaces for
singularities}, Cambridge University Press, 2002.



\bibitem{hitchin} N. Hitchin: {\em Frobenius manifolds}, notes by
D. Calderbank, J. Hurtubise and F. Lalonde (editors), Gauge Theory
and Symplectic Geometry, p. 69-112, Kluwer Academic Publishers
1997.

\bibitem{lin} J. Lin: {\em Some constraints on Frobenius
manifolds with a $tt^{*}$-structure}, Math. Z. vol. 267, 1-2
(2011), 81-108.


\bibitem{manin-book} Y. Manin:  {\em Frobenius Manifolds, Quantum Cohomology
and Moduli Spaces}, American Mathematical Society, Colloquim Publications,
vol. 47, 1999.


\bibitem{sabbah} C. Sabbah: {\it Isomonodromic deformations and
Frobenius manifolds - An Introduction}, Springer and EDP Sciences (2007).


\bibitem{sabbah-art} C. Sabbah: {\em Universal unfolding of Laurent polynomials and $tt^{*}$ structures},
From Hodge theory to integrability and TQFT: $tt^{*}$-geometry,
Proc. Symposia in Pure Math., vol. 78 (2008), p. 1-29.


\bibitem{saito} K. Saito: {\em The higher residue pairings $K_{F}^{(k)} $ for a family of hypersurfaces
singular points}, Singularities, Proc. of Symposia in Pure Math., vol. 40, American Mathematical Society (1983), p. 441-463.

\bibitem{saito1} K. Saito: {\em Period mapping associated to a primitive form}, Publ. RIMS, Kyoto Univ. 19 (1983),
p. 1231-1264.

\bibitem{SM} M. Saito: {\em On the structure of Brieskorn lattices}, Ann. Inst. Fourier, 39 (1989), p. 27-72.

\bibitem{sch} L. Sch\"{a}fer: {\em Harmonic bundles, topological-antitopological fusion and the related pluriharmonic
maps}, J. Geom.  Physics 26 (2006), p. 830-842.



\end{thebibliography}
\end{document}